\documentclass[oneside,a4paper,reqno]{amsart}
\usepackage{pdfsync, enumitem}
\usepackage{amsmath,bm}
\usepackage{mathtools}
\usepackage{graphicx}
\usepackage[all,cmtip]{xy}
\usepackage{tikz-cd}
\usepackage{mathrsfs}
\usepackage{hyperref}
\usepackage{tikz-cd}
\usepackage{amssymb}
\usepackage{stackrel}
\usepackage{adjustbox}
\usepackage{multicol}
\usepackage{amsmath, amsthm, amscd, amssymb, latexsym, eucal}
\usepackage[all]{xy}
\def\serieslogo@{} \def\@setcopyright{} \makeatother
\usepackage{fullpage}
\usepackage[margin=2.5cm, top=2.5cm, bottom=2.5cm, footskip=1cm]{geometry}

\usepackage[colorinlistoftodos]{todonotes}

\usepackage{hyperref}
\usepackage{color}
\usepackage{cite}
\usepackage{quiver}

\makeatletter
\renewcommand*\env@matrix[1][c]{\hskip -\arraycolsep
	\let\@ifnextchar\new@ifnextchar
	\array{*\c@MaxMatrixCols #1}}
\makeatother

\usepackage{color}

\numberwithin{equation}{section}
\newtheorem{thm}{Theorem}[section]
\newtheorem*{main-thm}{Theorem}
\newtheorem*{Auslander-thm}{Auslander's Theorem}

\newtheorem{cor}[thm]{Corollary}
\newtheorem{lem}[thm]{Lemma}
\newtheorem{prop}[thm]{Proposition}

\newtheorem*{thmA}{Theorem~A}
\newtheorem*{thmB}{Theorem~B}

\theoremstyle{definition}
\newtheorem{defn}[thm]{Definition}
\newtheorem{rem}[thm]{Remark}
\newtheorem{exmp}[thm]{Example}

\newtheorem{con}{Convention}

\newtheorem*{connot}{Notation and Conventions}

\newcommand\padova{%
\mathrel{\ooalign{\hss{\scalebox{0.5}{$\longleftrightarrow$}}\hss\cr%
\kern0.9ex\raise0.55ex\hbox{\scalebox{0.5}{$\boldsymbol{\bm{\vert}}$}}}}}
\newcommand\hfpadova{%
\mathrel{\ooalign{\hss{\scalebox{0.5}{$\longleftrightarrow$}}\hss\cr%
\kern0.9ex\raise0.55ex\hbox{\scalebox{0.5}{$\boldsymbol{\mathsf{h}\mathsf{f}\ \ }$}}}}}
\newcommand\rpadova{%
\mathrel{\ooalign{\hss{\scalebox{0.5}{$\longrightarrow$}}\hss\cr%
\kern0.9ex\raise0.55ex\hbox{\scalebox{0.5}{$\boldsymbol{\bm{\vert}}$}}}}}
\newcommand\lpadova{%
\mathrel{\ooalign{\hss{\scalebox{0.5}{$\longleftarrow$}}\hss\cr%
\kern0.9ex\raise0.55ex\hbox{\scalebox{0.5}{$\boldsymbol{\bm{\vert}}$}}}}}

\newcommand{\T}{\mathcal T}

\DeclareMathOperator*{\gd}{\mathsf{gl.dim}}

\DeclareMathOperator*{\Mod}{\mathsf{Mod}-\!}
\DeclareMathOperator*{\dgMod}{\mathsf{dgMod}-\!}

\DeclareMathOperator*{\smod}{\mathsf{mod}-\!}

\DeclareMathOperator*{\lMod}{\!-\mathsf{Mod}}

\DeclareMathOperator*{\proj}{\mathsf{proj}-\!}

\DeclareMathOperator*{\Inj}{\mathsf{Inj}-\!}

\DeclareMathOperator*{\Proj}{\mathsf{Proj}-\!}

\DeclareMathOperator*{\Add}{\mathsf{Add}}

\usepackage{stackengine}

\newsavebox{\proofbox}
\savebox{\proofbox}{\begin{picture}(7,7)%
	\put(0,0){\framebox(7,7){}}\end{picture}}

\usepackage{latexsym}
\usepackage{pstricks}
\usepackage{comment}

\usepackage[greek,english]{babel}

\begin{document}

\title{Homological Aspects of Separable Extensions of Triangulated Categories}

\author[karakikes]{Miltiadis Karakikes}
\address{Department of Mathematics, National and Kapodistrian University of Athens Panepistimioupolis, 15784 Athens, Greece}
\email{miltoskar@math.uoa.gr}

\author[Kostas]{Panagiotis Kostas}
\address{Department of Mathematics, Aristotle University of Thessaloniki, Thessaloniki 54124, Greece}
\email{pkostasg@math.auth.gr}

\subjclass[2020]{18G80, 16E35, 18G20, 16E10, 16E65}
\keywords{compactly generated triangulated category, global dimension, Gorenstein algebras, separable monad, Eilenberg-Moore category, differential graded algebras, equivariant category, singularity category}
\thanks{\textbf{Acknowledgements:} M.K. was supported by the Hellenic Foundation for Research and Innovation (HFRI) under the 4th Call for HFRI Ph.D. Fellowships (Fellowship Number: 9270). We thank C.~Psaroudakis for comments on an earlier draft and F.~Andriopoulos for helpful discussions.}

\begin{abstract}
   We investigate the homological behaviour of compactly generated triangulated categories under separable extensions. We show that homological invariants (finiteness of global dimension, gorensteinness and regularity) are preserved under such extensions. We also establish a relation between singularity categories in this setting, proving that the singularity category of a separable extension is equivalent, up to retracts, to a separable extension of the singularity category. Our results unify and extend classical phenomena from commutative and equivariant algebra, and provide new examples involving separable extensions of rings, quotient schemes, and skew group dg algebras. 
\end{abstract}

\date{\today}
\maketitle
\setcounter{tocdepth}{1}

\section{Introduction}

Separability is a recurrent theme in algebra. In commutative algebra, it is closely linked to étale extensions, while in the noncommutative setting it naturally appears in skew group rings associated with finite group actions of invertible order. Such constructions typically preserve homological invariants, including finiteness of global dimension, gorensteinness, and regularity. This behaviour was first established in \cite{EGA} for \'etale morphisms of schemes 
and in \cite{reiten_riedtmann} for skew group algebras. 

Balmer \cite{balmer, balmer2} studied the category of modules over a separable exact monad on a triangulated category. He showed that the latter, under mild assumptions, admits a unique canonical triangulated structure. Both the derived category of an \'etale extension and the derived category of a skew group ring can, then, be described as categories of modules over the derived category of their respective base rings.

The purpose of this paper is to formulate and exploit the homological behaviour of separable extensions in a broader and more conceptual setting. Rather than working directly with rings and modules, we base ourselves on the work of Balmer, and use instead triangulated categories and separable monads. In doing so, we provide a common framework that not only explains, but also extends, the homological behaviour observed in classical commutative and equivariant settings. As for the homological part, we use the intrinsic notions of finite global dimension, gorensteinness and regularity of compactly generated triangulated categories, defined in \cite{regular}. 

Our first theorem showcases the interplay of homological invariants for separable constructions; see Theorem \ref{comparing regularity} and Proposition \ref{comparing regularity 2}. Below we present, for convenience, only its ``equivariant'' form, which is proved in Corollary \ref{regularity for equivariant}.

\begin{thmA}
    Let $\mathcal{T}$ be a compactly generated triangulated category and assume an action of a finite group $G$ on $\mathcal{T}$ such that $|G|$ is invertible in $\mathcal{T}$ and further that $\mathcal{T}^{G}$ is canonically triangulated. Consider the following conditions that may be satisfied.
    \begin{itemize} 
        \item[\textnormal{(i)}] $\mathcal{T}$ has finite global dimension.
        \item[\textnormal{(ii)}] $\mathcal{T}$ is Gorenstein. 
        \item[\textnormal{(iii)}] $\mathcal{T}$ is regular \textnormal{(}when $\mathcal{T}$ is $k$-linear\textnormal{)}.
    \end{itemize}
    Then $\mathcal{T}$ satisfies any of \textnormal{(i)-(iii)} if and only if $\mathcal{T}^G$ satisfies the same condition.
\end{thmA}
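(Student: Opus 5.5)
The plan is to reduce Theorem~A to a general statement about a separable exact monad $M$ on $\mathcal{T}$ whose Eilenberg--Moore category $\mathcal{T}_M$ is canonically triangulated. The key is that in the equivariant situation the roles of $\mathcal{T}$ and $\mathcal{T}^G$ are symmetric. The free/forgetful adjunction $F\colon \mathcal{T}\rightleftarrows \mathcal{T}^G\colon U$ has $UF(X)=\bigoplus_{g\in G} g^*X$. The unit $X\to UF X$ is split by the averaging map, which uses that $|G|$ is invertible. So the monad $UF$ is separable and every object of $\mathcal{T}$ is a retract of $UF X$. Dually, $FU$ is a comonad on $\mathcal{T}^G$ and the counit $FU(Y)\to Y$ is split, again by averaging over $G$. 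Hence every object of $\mathcal{T}^G$ is a retract of some $F(X)$. Both $F$ and $U$ preserve coproducts. Since each is both a left and a right adjoint of the other ($F\dashv U\dashv F$ up to the averaging identification), both also preserve compact objects. It follows that $\mathcal{T}^G$ is compactly generated by $\{F(C)\}$ with $C$ compact in $\mathcal{T}$.

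With this in place I would prove a transfer lemma for each invariant. Take two compactly generated categories $\mathcal{A},\mathcal{B}$ and exact, coproduct- and compact-preserving functors $\Phi\colon\mathcal{A}\to\mathcal{B}$ and $\Psi\colon\mathcal{B}\to\mathcal{A}$ with $\Phi\dashv\Psi$ and $\Psi\dashv\Phi$, such that every object of $\mathcal{B}$ is a retract of $\Phi\Psi$ of itself. The claim is that if $\mathcal{A}$ satisfies (i), (ii) or (iii), then so does $\mathcal{B}$. Applying this with $(\Phi,\Psi)=(F,U)$ and with $(\Phi,\Psi)=(U,F)$ gives both directions of Theorem~A.

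The three invariants are expressed (as in \cite{regular}) through the compact objects.
\begin{itemize}
\item[(i)] For finite global dimension: every object of $\mathcal{A}$ lies in some $\langle \mathcal{A}^c\rangle$-built level with bounded length, or equivalently lies in $\mathrm{Add}(\mathcal{A}^c)^{\star n}$ for a uniform $n$. Applying $\Phi$ preserves coproducts, triangles and compacts, so $\Phi(\mathcal{A})\subseteq \mathrm{Add}(\mathcal{B}^c)^{\star n}$. Closure under retracts then gives the bound on all of $\mathcal{B}$.
\item[(ii)] For Gorensteinness: the condition compares the subcategories built from (pure-)projective and (pure-)injective objects up to finite length. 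Here $\Phi$ and $\Psi$ send $\mathrm{Add}(\mathcal{A}^c)$ and the pure-injectives (products of Brown--Comenetz type duals of compacts) to the analogous classes, using the two-sided adjunction: the right adjoint preserves products and pure-injectivity. The same retract argument then transfers the finite-dimension bounds.
\item[(iii)] For regularity in the $k$-linear case: this is a condition on the compacts, e.g. every compact is built in finitely many steps from a chosen generator or from the appropriate perfect objects. Here $\Phi$ restricted to compacts is exact, and any compact of $\mathcal{B}$ is a retract of $\Phi\Psi$ of itself with $\Psi$ of it compact. This transfers the property.
\end{itemize}

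The main obstacle I expect is (ii). The precise definition of Gorenstein in \cite{regular} must be checked to be stable under functors that are simultaneously left and right adjoints, and one must confirm that injective-type objects go to injective-type objects. I would handle this by verifying that $\Psi$, as a right adjoint of $\Phi$, preserves the relevant injective class, and symmetrically for $\Phi$. This is exactly where the two-sided adjunction coming from the invertibility of $|G|$ is needed. A secondary point is checking that $\mathcal{T}^G$, with its canonical triangulation, makes $F$ and $U$ exact, which is Balmer's result that the canonical structure is the one making $U$ exact.
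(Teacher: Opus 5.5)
There is a genuine gap: you never use the definitions that the statement refers to. In Definition~\ref{notions of regularity}, the three conditions are:
\begin{itemize}
\item finite global dimension means $\mathcal{T}^{\mathsf{b}}=\mathcal{T}^{\mathsf{b}}_p$;
\item Gorenstein means $\mathcal{T}^{\mathsf{b}}_p=\mathcal{T}^{\mathsf{b}}_i$;
\item regular means $\mathcal{T}^{\mathsf{b}}_c=\mathcal{T}^{\mathsf{c}}$.
\end{itemize}
The subcategories of Definition~\ref{def:subcategories} are cut out by eventual vanishing of $\mathsf{Hom}(x,t[n])$, or finite generation of $\bigoplus_n\mathsf{Hom}(x,t[n])$ over $k$, against compact or bounded objects. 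What you transfer instead is different: a uniform bound on the level of all objects with respect to $\Add(\mathcal{A}^{\mathsf{c}})$, finite resolutions by pure-projectives and pure-injectives, and a finite-building condition internal to $\mathcal{A}^{\mathsf{c}}$. None of these is the notion in the statement, and they need not agree with it. So your transfer lemma, even if correct for your notions, does not prove Theorem~A.

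Here is a concrete failure for (i). Let $A=k[x]$ with $|x|=2$ and zero differential, and let $\mathcal{T}=\mathsf{D}(A)$. Graded $k[x]$-modules have projective dimension at most $1$ and graded projectives are free. Hence every object lies in $\Add(\{A[n]\}_{n\in\mathbb{Z}})\ast\Add(\{A[n]\}_{n\in\mathbb{Z}})$, so your criterion (i) holds. On the other hand, $\mathsf{Hom}(A,y[n])=H^n(y)$ and every $y\in\mathcal{T}^{\mathsf{b}}$ has bounded cohomology, so $A\in\mathcal{T}^{\mathsf{b}}_p$. But $\mathsf{Hom}(A,A[2m])\neq 0$ for all $m\geq 0$, so $A\notin\mathcal{T}^{\mathsf{b}}$. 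Thus $\mathcal{T}$ does not have finite global dimension in the sense of the theorem. Similarly, regularity is not a condition on $\mathcal{T}^{\mathsf{c}}$ alone. It asks that the compacts be exactly the objects $t$ with $\bigoplus_n\mathsf{Hom}(x,t[n])\in\smod k$ for all compact $x$, a class that a priori contains non-compact objects.

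Apart from this, your framework is the right one and is essentially the paper's: the ambidextrous adjunction, preservation of coproducts and compacts, and the two retract properties. What is missing is the fact that $\Phi$ and $\Psi$ restrict to every intrinsic subcategory. This follows formally from Lemma~\ref{perp under adjunction}:
\begin{itemize}
\item from $\Psi\dashv\Phi$ and $\Psi(\mathcal{B}^{\mathsf{c}})\subseteq\mathcal{A}^{\mathsf{c}}$ you get $\Phi(\mathcal{A}^{\mathsf{b}})\subseteq\mathcal{B}^{\mathsf{b}}$ and $\Phi(\mathcal{A}^{\mathsf{b}}_c)\subseteq\mathcal{B}^{\mathsf{b}}_c$;
\item symmetrically, $\Psi(\mathcal{B}^{\mathsf{b}})\subseteq\mathcal{A}^{\mathsf{b}}$;
\item then $\Phi\dashv\Psi$ gives $\Phi(\mathcal{A}^{\mathsf{b}}_p)\subseteq\mathcal{B}^{\mathsf{b}}_p$, and $\Psi\dashv\Phi$ gives $\Phi(\mathcal{A}^{\mathsf{b}}_i)\subseteq\mathcal{B}^{\mathsf{b}}_i$, and likewise for $\Psi$.
\end{itemize}
Each equality then transfers by your retract argument together with thickness. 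For instance, suppose $\mathcal{A}^{\mathsf{b}}_p=\mathcal{A}^{\mathsf{b}}_i$ and $y\in\mathcal{B}^{\mathsf{b}}_p$. Then $\Psi y\in\mathcal{A}^{\mathsf{b}}_i$, so $\Phi\Psi y\in\mathcal{B}^{\mathsf{b}}_i$, and its summand $y$ lies in $\mathcal{B}^{\mathsf{b}}_i$. Conditions (i) and (iii) go the same way, using compact preservation for (iii). This is exactly the route of the discussion before Corollary~\ref{subcategories for equivariant}, together with Theorem~\ref{main theorem}, Theorem~\ref{comparing regularity} and Proposition~\ref{comparing regularity 2}. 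In particular, (ii) is no harder than (i), and pure-injectivity never enters.
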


The above theorem, together with its more general form, subsume the homological behaviour of skew group algebras observed in \cite{reiten_riedtmann}, cover the transfer of regularity for proper schemes along a group action (Corollary \ref{regularity for schemes}) or an \'etale morphism (Corollary \ref{regularity for etale maps}) and also provide completely new examples of smooth and Gorenstein skew group dg algebras (see Corollary~\ref{homological invariants for skew group dg algebras}). The assumption that $\mathcal{T}^G$ is canonically triangulated is mild; as already mentioned, $\mathcal{T}^{G}$ admits a canonical pre-triangulated structure and in practice it is triangulated; see also Convention \ref{convention}. 

There is a notion of a singularity category for $k$-linear compactly generated triangulated categories, introduced in \cite[Definition 5.7]{regular}, which (under mild assumptions) recovers the singularity category of a ring or a scheme, see Example \ref{examples of singularity}. Our second main result is Theorem \ref{singularity}, which reveals a relation between the singularity category of modules over a separable monad and the modules over an induced monad on the singularity category. Below we present the equivariant case, which is proved in Corollary~\ref{equivariant singularity}. 

\begin{thmB}
    Assume the setup of Theorem A and that $\mathcal{T}$ is $k$-linear. Then, there is an exact functor 
    \[
    (\mathcal{T}^G)^\mathsf{sg}_k \to (\mathcal{T}^\mathsf{sg}_k)^G
    \]
    which is an equivalence up to retracts. 
\end{thmB}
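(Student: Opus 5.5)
We reduce Theorem B to the general monadic statement (Theorem \ref{singularity}) and then identify the induced monad on the singularity category with the equivariant one. Write $\mathcal{T}^\mathsf{sg}_k$ for the Verdier quotient of the compact objects $\mathcal{T}^c$ by the thick subcategory of ``perfect'' objects, as in \cite[Definition 5.7]{regular}. Recall that $\mathcal{T}^G$ is the Eilenberg--Moore category of the monad $A=\bigoplus_{g\in G} g^*$ on $\mathcal{T}$, with unit the inclusion of the identity summand. Since $|G|$ is invertible, $A$ is separable: the multiplication $A\circ A\to A$ has a section, given by the averaged diagonal $\frac{1}{|G|}\sum_{g}$.

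\textbf{Step 1: compatibility with the defining subcategories.} The free and forgetful functors $F\colon \mathcal{T}\rightleftarrows \mathcal{T}^G\colon U$ are exact and both preserve compact objects, since $UF=\bigoplus_g g^*$. I would check that both functors also preserve the subcategory of perfect objects that is killed in the singularity category. This should follow because that subcategory is defined intrinsically (via the $k$-linear finiteness and regularity conditions of \cite{regular}), and the autoequivalences $g^*$ preserve every intrinsic notion. It follows that $A$ descends to an exact monad $\bar A$ on $\mathcal{T}^\mathsf{sg}_k$. This $\bar A$ is again separable, because the section of the multiplication is a natural transformation and therefore descends to the quotient.

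\textbf{Step 2: the comparison functor.} The functor $U$ induces $\bar U\colon(\mathcal{T}^G)^\mathsf{sg}_k\to\mathcal{T}^\mathsf{sg}_k$. The $A$-action on $UX$ descends to an $\bar A$-action, which lifts $\bar U$ to an exact functor
\[
\Phi\colon(\mathcal{T}^G)^\mathsf{sg}_k\to \bar A\text{-}\mathsf{Mod}(\mathcal{T}^\mathsf{sg}_k)\simeq(\mathcal{T}^\mathsf{sg}_k)^G .
\]
The last equivalence holds because $\bar A$ is again the group monad for the induced $G$-action on the quotient.

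\textbf{Step 3: equivalence up to retracts.} Two things must be shown. For full faithfulness, I would use the standard description of morphisms in a Verdier quotient by roofs. Since every object of $\mathcal{T}^G$ is a retract of a free object $FUX$ (by separability), morphisms between free objects can be computed via adjunction from morphisms in $\mathcal{T}^\mathsf{sg}_k$. Those morphisms agree on both sides, because $\bar F\dashv\bar U$ persists on the quotients. This settles the free objects, and passing to retracts handles all objects. For essential surjectivity up to retracts, every $\bar A$-module $M$ is a retract of the free module $\bar A M=\Phi(F M')$, where $M'$ is any lift of $M$ to $\mathcal{T}^c$.

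\textbf{Main obstacle.} I expect the main obstacle to be Step 1: showing that the perfect subcategory is stable under $F$ and $U$, and that $\bar A$-modules on the quotient are compatible with quotients of $A$-modules. The second point is why the result is only an equivalence up to retracts, since an idempotent in the quotient need not lift. I would handle it by invoking Balmer's result that modules over a separable monad on an idempotent-complete triangulated category are triangulated, after passing to idempotent completions where necessary.
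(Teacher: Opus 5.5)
Your overall architecture matches the paper's route (Theorem~\ref{singularity} via Proposition~\ref{verdier quotients}, specialised in Corollary~\ref{equivariant singularity}). Steps 2--3 are sound. Where the paper cites separability of $\overline{\mathsf{U}_{\mathsf{M}}}$ together with \cite[Proposition 3.5]{xwchen}, you reprove that proposition directly: full faithfulness on free objects via the descended adjunction $\bar F\dashv\bar U$, passage to retracts, and density up to retracts from separability of $\bar A$. That works. The gap is in Step 1, and Step 1 is where all the content of the result lies.

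\textbf{The definition.} $\mathcal{T}^{\mathsf{sg}}_k$ is not compacts modulo ``perfect'' objects. It is $\mathsf{thick}(\mathcal{T}^{\mathsf{b}}_c\ast\mathcal{T}^{\mathsf{c}})/(\mathcal{T}^{\mathsf{b}}_c\cap\mathcal{T}^{\mathsf{c}})$; for instance, it is $\mathsf{D}_{\mathsf{fd}}(A)/\mathsf{per}(A)$ for proper $A$, whose objects need not be compact. So what you must show is that your $F$ (the paper's $\mathsf{Ind}$) and $U$ carry compact objects to compact objects and bounded finite objects to bounded finite objects, in both directions between $\mathcal{T}$ and $\mathcal{T}^G$. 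Preservation of numerator and denominator then follows, because exact functors respect $\ast$, $\mathsf{thick}$ and intersections. Likewise, in Step 3 no lift to $\mathcal{T}^{\mathsf{c}}$ is needed: the object already lies in the numerator.

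\textbf{The justification.} Your reason does not deliver the needed preservation. That $\mathcal{T}^{\mathsf{c}}$ and $\mathcal{T}^{\mathsf{b}}_c$ are intrinsic and stable under the $g^*$ yields only $A(\mathcal{T}^{\mathsf{c}})\subseteq\mathcal{T}^{\mathsf{c}}$ and $A(\mathcal{T}^{\mathsf{b}}_c)\subseteq\mathcal{T}^{\mathsf{b}}_c$ inside $\mathcal{T}$. It says nothing about how the intrinsic subcategories of the different category $\mathcal{T}^G$ relate to those of $\mathcal{T}$. Similarly, $UF=\bigoplus_g g^*$ preserving compacts only shows that $U$ preserves compactness of free objects.

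That comparison is precisely the paper's main tool. Balmer's description $(\mathcal{T}^G)^{\mathsf{c}}=\mathsf{thick}(F(\mathcal{T}^{\mathsf{c}}))$ (Theorem~\ref{balmer2}) handles compacts. Theorem~\ref{main theorem}(iv) handles bounded finite objects: $(x,\phi)\in(\mathcal{T}^G)^{\mathsf{b}}_c$ if and only if $x\in\mathcal{T}^{\mathsf{b}}_c$. Combined with $A$-stability, these give all four preservation statements, exactly as in the proof of Theorem~\ref{singularity}.

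In the equivariant case there is also a short fix via ambidexterity $F\dashv U\dashv F$. Each of $F,U$ is left adjoint to a coproduct-preserving functor, hence preserves compacts. Then each, being right adjoint to a compact-preserving functor, preserves bounded finite objects by Lemma~\ref{perp under adjunction}(i).
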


The above together with its more general form are a common framework for equivalences of great interest and include the following examples about certain singularity categories, all of which are new:
\begin{itemize}
    \item[$\bullet$] (Example \ref{singularity of separable}) an equivalence up to retracts $\mathsf{D}_{\mathsf{sg}}(A)\rightarrow \mathsf{M}\lMod_{\mathsf{D}_{\mathsf{sg}}(k)}$ for $A$ a finite dimensional, flat and separable algebra over a commutative ring $k$;
    \item[$\bullet$] (Example~\ref{singularity of quotient scheme}) an equivalence up to retracts $\mathsf{D}_{\mathsf{sg}}(X/G)\rightarrow \mathsf{D}_{\mathsf{sg}}(X)^G$ for a proper scheme $X$ and certain group actions $G\curvearrowright X$;  
    \item[$\bullet$] (Example \ref{singularity of skew group dg}) an equivalence up to retracts $\mathsf{D}_{\mathsf{sg}}(A\#G)\rightarrow \mathsf{D}_{\mathsf{sg}}(A)^G$ for a proper dg algebra $A$ and certain group actions $G\curvearrowright A$; \item[$\bullet$] (Example \ref{singularity of skew group dg 2}) an equivalence up to retracts $\mathsf{D}_{\mathsf{agk}}(A\#G)\rightarrow \mathsf{D}_{\mathsf{agk}}(A)^G$ for a connective dg algebra $A$ and certain group actions $G\curvearrowright A$, 
\end{itemize}
where $\mathsf{D}_{\mathsf{agk}}(A)$ stands for the \emph{Amiot-Guo-Keller singularity category} \cite{amiot, guo}. 

Both the homological notions of Theorem A and the singularity category of Theorem B are defined via a list of intrinsic subcategories associated to a compactly generated triangulated category $\mathcal{T}$, defined in \cite{regular}. The latter subcategories are denoted by $\mathcal{T}^{\mathsf{b}}$, $\mathcal{T}^{\mathsf{b}}_p$, $\mathcal{T}^{\mathsf{b}}_i$ and $\mathcal{T}^{\mathsf{b}}_c$ (defined when $\mathcal{T}$ is $k$-linear) and we postpone their explicit description until Definition \ref{def:subcategories}. Our main tool in this paper, upon which the above theorems are based, is the behaviour of these subcategories under a separable extension; see Theorem \ref{main theorem}. For instance under the setup of Theorem A, we prove that there is an equality
\[
(\mathcal{T}^{\mathsf{b}}_{?})^G=(\mathcal{T}^G)^{\mathsf{b}}_?
\]
of subcategories of $\mathcal{T}^G$ for every $?\in\{\varnothing,i,p,c\}$ (where $\mathcal{T}$ is $k$-linear for $?=c$), see Corollary \ref{subcategories for equivariant}. 

We accompany the above results with several observations of independent interest. For instance, we show in Corollary \ref{perissos} that the skew group ring of a quasi-excellent commutative noetherian ring of finite Krull dimension satisfies a technical condition used in \cite{regular} to compute the intrinsic subcategories of homotopy categories of injectives. Moreover, we prove in Corollary \ref{global dimension for equivariant via t structure} an equality of relative global dimensions for equivariant categories; a result which enhances Theorem A(i). Lastly, we include an appendix where we observe the behaviour of Brown-Comenetz dual objects for a separable extension of a compactly generated triangulated category, based on Balmer's description of the compact objects.

\subsection*{\textbf{Structure of the paper.}} In Section \ref{preliminaries} we collect preliminaries that are used throughout the paper. Section \ref{main results} contains some of the main results of the paper; in particular, in Subsection \ref{Intrinsic subcategories for separable extensions} we study structural properties of the intrinsic subcategories for separable extensions of triangulated categories and using these, we compare in Subsection \ref{Comparing homological invariants for separable extensions} various homological invariants. In Section \ref{Working with the examples} we apply the results of the previous section to particular examples, namely
equivariant triangulated categories (Subsection~\ref{Equivariant triangulated categories}) and separable extensions of rings and \'etale morphisms of schemes (Subsection~\ref{Separable extensions}). Section~\ref{singularity categories} deals with singularity categories and in Section \ref{Global dimension via t-structures} we study the global dimension of a triangulated category relative to a t-structure and its behaviour under a separable extension. Appendix \ref{Brown-Comenetz duals} contains a result about the Brown-Comenetz dual objects in a separable extension.

\begin{connot}
For a ring $R$, we write $\Mod{R}$ for the category of all right $R$-modules and we consider its subcategories $\Proj{R}$ of projective modules, $\proj{R}$ of finitely generated projective modules, $\Inj R$ of injective modules, and $\smod{R}$ of finitely presented modules. Whenever we say that a triangulated category has coproducts we mean set-indexed coproducts. Unless otherwise stated, $k$ denotes a commutative noetherian ring. By an Artin algebra (respectively, a Noether algebra) we mean a finitely generated algebra over a commutative artinian (respectively, noetherian) ring. We say that a positive integer $n$ is invertible in $\mathsf{A}$ if for every morphism $f$ there exists a morphism $g$ such that $f= ng$. If $\mathsf{A}$ is a $k$-linear category, then $n$ is invertible in $\mathsf{A}$ if and only if it is invertible in $k$.  
\end{connot} 

\pagebreak

\section{Preliminaries} \label{preliminaries}

We collect some preliminaries, that are used throughout the paper, regarding separable extensions of triangulated categories \cite{balmer, balmer2} and intrinsic homological algebra for triangulated categories \cite{regular}.  

\subsection{Separable extensions} Let $\mathsf{A}$ be an additive category. A \emph{monad} on $\mathsf{A}$ is a triple $(\mathsf{M},\eta,\mu)$ consisting of an additive endofunctor $\mathsf{M}\colon\mathsf{A}\rightarrow\mathsf{A}$, a natural transformation $\eta\colon \mathsf{Id}_{\mathsf{A}}\rightarrow\mathsf{M}$ called the \emph{unit} and a natural transformation $\mu\colon\mathsf{M}^2\rightarrow\mathsf{M}$ called \emph{multiplication}, subject to the following relations: 
\[
\mu\circ \mathsf{M}\mu=\mu\circ \mu\mathsf{M} \ \ \ \text{  and  } \ \ \ \mu\circ \mathsf{M}\eta=\mu\circ \eta\mathsf{M}.
\]
Further, a monad as above is called \emph{separable} if $\mu$ admits a section, i.e.\ there exists a natural transformation $\sigma\colon\mathsf{M}\rightarrow \mathsf{M}^2$ such that $\mu\circ\sigma=\mathsf{Id}_{\mathsf{M}}$ and moreover 
\[
\mathsf{M}\mu\circ\sigma\mathsf{M}=\sigma\mu=\mu\mathsf{M}\circ\mathsf{M}\sigma.
\]
When $\mathsf{A}$ is a pre-triangulated category, we further require that $\mathsf{M},\eta,\mu,\sigma$ commute with the suspension, see \cite{balmer} for details. Henceforth, we will be simply denoting a monad by $\mathsf{M}$. We now recall the definition of the Eilenberg-Moore category \cite{eilenberg_moore} associated to a monad, following the notation from \cite{balmer}. 

\begin{defn} \label{eilenberg moore}
    Let $\mathsf{M}$ be a monad on an additive category $\mathsf{A}$.
    \begin{itemize}
        \item[$\bullet$] An  \emph{$\mathsf{M}$-module} is a pair $(x,u)$ where $x$ is an object in $\mathsf{A}$ and $u$ is a morphism $\mathsf{M}(x)\rightarrow x$ satisfying $u\circ \mathsf{M}u=u\circ \mu_x$ and $1_x=u\circ \eta_x$.
        \item[$\bullet$] A morphism $f\colon(x,u)\rightarrow(y,v)$ of $\mathsf{M}$-modules is a morphism $f\colon x\rightarrow y$ such that $v\circ \mathsf{M}(f)=f\circ u$.
        \end{itemize}
        The \emph{Eilenberg-Moore category} of $\mathsf{M}$, henceforth denoted by $\mathsf{M}\lMod_{\mathsf{A}}$, is the category with objects the $\mathsf{M}$-modules and morphisms the morphisms of $\mathsf{M}$-modules.
\end{defn}

\begin{lem}  \label{adjunction} \textnormal{(\!\!\!\cite{eilenberg_moore})}
    Let $\mathsf{M}$ be a monad on an additive category $\mathsf{A}$. There exist functors, the forgetful functor $\mathsf{U}_{\mathsf{M}}$ and the induction functor $\mathsf{T}_{\mathsf{M}}$, defined as follows.  
    \begin{itemize}
        \item[\textnormal{(i)}] $\mathsf{U}_{\mathsf{M}}\colon \mathsf{M}\lMod_{\mathsf{A}}\rightarrow\mathsf{A}$ is given by $\mathsf{U}_{\mathsf{M}}(x,u)=x$ on objects and by $\mathsf{U}_{\mathsf{M}}(f)=f$ on morphisms. 
        \item[\textnormal{(ii)}]  $\mathsf{T}_{\mathsf{M}}\colon \mathsf{A}\rightarrow\mathsf{M}\lMod_{\mathsf{A}}$ is given by $\mathsf{T}_{\mathsf{M}}(x)=(\mathsf{M}(x),\mu_x)$ on objects and $\mathsf{T}_{\mathsf{M}}(f)=\mathsf{M}(f)$ on morphisms.  
    \end{itemize}
    Observe that $\mathsf{M}=\mathsf{U}_{\mathsf{M}}\mathsf{T}_{\mathsf{M}}$ and, moreover, it holds that $(\mathsf{T}_{\mathsf{M}},\mathsf{U}_{\mathsf{M}})$ is an adjoint pair.
\end{lem}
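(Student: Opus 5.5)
We verify the standard Eilenberg--Moore adjunction directly. First we check that $\mathsf{U}_{\mathsf{M}}$ and $\mathsf{T}_{\mathsf{M}}$ are well-defined additive functors. For $\mathsf{U}_{\mathsf{M}}$ this is immediate from the definition of morphisms of $\mathsf{M}$-modules.

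For $\mathsf{T}_{\mathsf{M}}$ we must show that $(\mathsf{M}(x),\mu_x)$ is an $\mathsf{M}$-module.
\begin{itemize}
\item Associativity $\mu_x\circ\mathsf{M}\mu_x=\mu_x\circ\mu_{\mathsf{M}x}$ is the first monad axiom evaluated at $x$.
\item Unitality $\mu_x\circ\eta_{\mathsf{M}x}=1_{\mathsf{M}x}$ is the unit axiom. Here we read the unit relations as $\mu\circ\mathsf{M}\eta=\mathsf{Id}_{\mathsf{M}}=\mu\circ\eta\mathsf{M}$, which is the intended meaning of the displayed relation.
\item For $f\colon x\to y$, the map $\mathsf{M}(f)$ is a module morphism because $\mu_y\circ\mathsf{M}^2(f)=\mathsf{M}(f)\circ\mu_x$ is naturality of $\mu$.
\end{itemize}
Functoriality and additivity of $\mathsf{T}_{\mathsf{M}}$ are inherited from $\mathsf{M}$. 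The equality $\mathsf{M}=\mathsf{U}_{\mathsf{M}}\mathsf{T}_{\mathsf{M}}$ then holds by construction.

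For the adjunction, we exhibit a unit and a counit and check the triangle identities.
\begin{itemize}
\item The unit is $\eta\colon\mathsf{Id}_{\mathsf{A}}\to\mathsf{U}_{\mathsf{M}}\mathsf{T}_{\mathsf{M}}=\mathsf{M}$.
\item The counit $\varepsilon\colon\mathsf{T}_{\mathsf{M}}\mathsf{U}_{\mathsf{M}}\to\mathsf{Id}$ has components $\varepsilon_{(x,u)}=u\colon(\mathsf{M}x,\mu_x)\to(x,u)$.
\end{itemize}
The map $u$ is a module morphism precisely by the axiom $u\circ\mathsf{M}u=u\circ\mu_x$. It is natural in $(x,u)$ precisely by the definition of module morphisms, $v\circ\mathsf{M}f=f\circ u$.

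It remains to check the two triangle identities.
\begin{itemize}
\item $\varepsilon_{\mathsf{T}x}\circ\mathsf{T}(\eta_x)=\mu_x\circ\mathsf{M}\eta_x=1_{\mathsf{M}x}$, which is the unit law of the monad.
\item $\mathsf{U}(\varepsilon_{(x,u)})\circ\eta_{\mathsf{U}(x,u)}=u\circ\eta_x=1_x$, which is the unit law of the module.
\end{itemize}

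Equivalently, one can write down the hom-set bijection $\mathrm{Hom}_{\mathsf{M}}(\mathsf{T}x,(y,v))\cong\mathrm{Hom}_{\mathsf{A}}(x,y)$ given by $g\mapsto g\circ\eta_x$, with inverse $f\mapsto v\circ\mathsf{M}f$.

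There is no real obstacle here. The only point needing care is bookkeeping: one must confirm that the counit components are genuinely morphisms in $\mathsf{M}\lMod_{\mathsf{A}}$, and use the correct reading of the monad unit axiom. Every step reduces to a single monad or module axiom.
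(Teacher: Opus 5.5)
Your proposal is correct and is exactly the classical Eilenberg--Moore verification (unit $\eta$, counit given by the structure maps $u$, triangle identities from the monad and module unit laws). The paper gives no proof of its own and only cites Eilenberg--Moore; you were also right to read its displayed relation $\mu\circ\mathsf{M}\eta=\mu\circ\eta\mathsf{M}$ as $\mu\circ\mathsf{M}\eta=\mathsf{Id}_{\mathsf{M}}=\mu\circ\eta\mathsf{M}$, since as printed it omits the identity needed for the unitality and triangle-identity steps.
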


Recall that a functor $\mathsf{e} \colon \mathsf{D} \to \mathsf{C}$ between additive categories is \textit{separable} if for all $x,y\in\mathsf{D}$, the maps $ \mathsf{Hom}_{\mathsf{D}}(x,y) \to \mathsf{Hom}_{\mathsf{C}}(\mathsf{e}(x) , \mathsf{e}(y))$ admit retractions, which are natural in $x,y$. In the setup of Lemma \ref{adjunction}, the functor $\mathsf{U}_{\mathsf{M}}$ is separable if and only if $\mathsf{M}$ is a separable monad; see for instance \cite[Lemma 3.2]{xwchen}.

It is useful to observe that for any adjoint pair $\mathsf{l}\colon\mathsf{C}\rightarrow \mathsf{D}$ and $\mathsf{e}\colon \mathsf{D}\rightarrow \mathsf{C}$, where $\mathsf{l}$ is the left adjoint, the endofunctor $\mathsf{M}\coloneqq\mathsf{el}$ of $\mathsf{C}$ defines a monad, see \cite{eilenberg_moore, maclane categories} for details. Then, there is a \emph{comparison functor} $\Phi\colon \mathsf{D}\rightarrow \mathsf{M}\lMod_{\mathsf{C}}$ making the following diagram commute 
\begin{equation} \label{comparison}
\begin{tikzcd}
	{\mathsf{D}} && {\mathsf{C}} \\
	\\
	&& {\mathsf{M}\lMod_{\mathsf{C}}}
	\arrow["{{\mathsf{e}}}"', shift right, from=1-1, to=1-3]
	\arrow["\Phi"', curve={height=12pt}, dashed, from=1-1, to=3-3]
	\arrow["{{\mathsf{l}}}"', shift right=2, from=1-3, to=1-1]
	\arrow["{{\mathsf{T}_{\mathsf{M}}}}"', shift right, from=1-3, to=3-3]
	\arrow["{{\mathsf{U}_{\mathsf{M}}}}"', shift right=2, from=3-3, to=1-3]
\end{tikzcd}
\end{equation}
i.e.\ $\Phi\circ \mathsf{l}=\mathsf{T}_{\mathsf{M}}$ and $\mathsf{e}=\mathsf{U}_{\mathsf{M}}\circ \Phi$, see \cite{maclane categories}. In this case the functor $\mathsf{e}$ is separable if and only if $\mathsf{M}= \mathsf{el}$ is separable and $\Phi$ is an equivalence up to retracts, see for instance \cite[Proposition 3.5]{xwchen}. By an equivalence up to retracts we mean a functor which is fully faithful and every object in the target is a retract of an object in the image (this is equivalent to the induced functor between the idempotent completions being an equivalence; see for instance \cite[Lemma 3.4]{xwchen}).  

\begin{thm} \label{balmer1} (Balmer) Let $\mathcal{T}$ be a pre-triangulated category and $\mathsf{M}$ a separable exact monad on $\mathcal{T}$. If  $\mathsf{M}\lMod_{\mathcal{T}}$ is idempotent complete, then it admits a unique pre-triangulated structure making the functors $\mathsf{U}_{\mathsf{M}}$ and $\mathsf{T}_{\mathsf{M}}$ exact. 
\end{thm}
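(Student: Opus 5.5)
The plan is to follow Balmer's strategy: define suspension and distinguished triangles on $\mathsf{M}\lMod_{\mathcal{T}}$ levelwise in $\mathcal{T}$, and use separability and idempotent completeness to supply the missing cones.

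\textbf{Suspension.} Since $\mathsf{M}$ commutes with $\Sigma$, I would set $\Sigma(x,u)=(\Sigma x,\Sigma u)$, which is an $\mathsf{M}$-module. This is an automorphism (or autoequivalence) of $\mathsf{M}\lMod_{\mathcal{T}}$ commuting with $\mathsf{U}_{\mathsf{M}}$ and $\mathsf{T}_{\mathsf{M}}$.

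\textbf{Distinguished triangles.} I declare a triangle $X\to Y\to Z\to\Sigma X$ of $\mathsf{M}$-modules distinguished if its image under $\mathsf{U}_{\mathsf{M}}$ is distinguished in $\mathcal{T}$. Most axioms are inherited from $\mathcal{T}$:
\begin{itemize}
\item closure under isomorphism and rotation hold because $\mathsf{U}_{\mathsf{M}}$ commutes with $\Sigma$;
\item $X\xrightarrow{1}X\to0\to\Sigma X$ is distinguished;
\item once cones exist, the morphism axiom TR3 and exactness of $\mathsf{T}_{\mathsf{M}}$ (since $\mathsf{U}_{\mathsf{M}}\mathsf{T}_{\mathsf{M}}=\mathsf{M}$ is exact) will follow.
\end{itemize}

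\textbf{Existence of cones (the main obstacle).} Given $f\colon(x,u)\to(y,v)$, complete $f$ in $\mathcal{T}$ to a triangle $x\to y\to z\to\Sigma x$. The difficulty is that $z$ need not carry an $\mathsf{M}$-action, since cones are not functorial. I would handle this as follows.
\begin{enumerate}
\item Apply $\mathsf{M}$ to get a distinguished triangle $\mathsf{M}x\to\mathsf{M}y\to\mathsf{M}z\to\Sigma\mathsf{M}x$, which is a triangle of free modules.
\item Use the retraction $\mathsf{M}x\to x$ given by $u$, the unit sections $\eta$, and separability. The section $\sigma$ makes every module a natural retract of a free one, $(x,u)\to\mathsf{T}_{\mathsf{M}}(x)\to(x,u)$, via $\mathsf{M}(\eta_x)$-type maps corrected by $\sigma$. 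This exhibits the original triangle in $\mathcal{T}$ as a retract of a triangle underlying free modules.
\item Concretely, from the idempotent endomorphism of the free triangle induced by $\sigma$, obtain an idempotent $e$ on $\mathsf{T}_{\mathsf{M}}(z)$ in $\mathsf{M}\lMod_{\mathcal{T}}$, using naturality of $\sigma$ and the identity $\mu\circ\sigma=1$. The delicate part is checking that $e$ is genuinely idempotent and compatible with the triangle maps; this is where Balmer uses the separability identities $\mathsf{M}\mu\circ\sigma\mathsf{M}=\sigma\mu$.
\item By idempotent completeness, split $e$ to get a module $(z',w)$. Then $(x,u)\to(y,v)\to(z',w)\to\Sigma(x,u)$ is a direct summand of a distinguished triangle in $\mathcal{T}$, and direct summands of distinguished triangles in a pre-triangulated category are distinguished.
\end{enumerate}

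\textbf{Uniqueness.} Any pre-triangulated structure making $\mathsf{U}_{\mathsf{M}}$ exact has its distinguished triangles mapping to distinguished triangles, so they are contained in the class defined above. Conversely, take a triangle in that class and compare it via TR3 with a triangle on the same first morphism in the other structure. The comparison map is an isomorphism in $\mathcal{T}$ after applying $\mathsf{U}_{\mathsf{M}}$, and $\mathsf{U}_{\mathsf{M}}$ is conservative (faithful, with inverse maps automatically $\mathsf{M}$-linear), so the two classes coincide.
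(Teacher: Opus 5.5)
Your outline (distinguished triangles detected by $\mathsf{U}_{\mathsf{M}}$, cones obtained by splitting an idempotent on a free triangle) matches the route the paper takes over from Balmer. However, step 3, which carries all the content, does not work as you describe it.

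There is no idempotent endomorphism of the free triangle ``induced by $\sigma$'' on the third vertex. On the first two vertices the $\mathsf{M}$-linear idempotents are $p_X=s_X\circ u$ and $p_Y=s_Y\circ v$, where $s_X=\mathsf{M}(u)\circ\sigma_x\circ\eta_x$ is the $\mathsf{M}$-linear section of $u$. They are built from the actions $u,v$, and the identities $\mathsf{M}\mu\circ\sigma\mathsf{M}=\sigma\mu=\mu\mathsf{M}\circ\mathsf{M}\sigma$ are what make $s_X$ $\mathsf{M}$-linear. Since $z$ carries no action, nothing assembled from $\eta,\mu,\sigma$ alone gives an endomorphism of $\mathsf{M}z$ compatible with $p_X,p_Y$ along $\mathsf{M}g,\mathsf{M}h$. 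Nor can any monad identity make a non-canonically chosen third component idempotent.

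Two ideas are missing.
\begin{enumerate}
\item Apply TR3 in $\mathcal{T}$ to $(s_X,s_Y)$, from $x\to y\to z\to\Sigma x$ to its image under $\mathsf{M}$; the square commutes by naturality of $s$. This gives $\gamma\colon z\to\mathsf{M}z$. Its adjunct $c=\mu_z\circ\mathsf{M}(\gamma)$ is $\mathsf{M}$-linear, and using $\mu_y\circ\mathsf{M}(s_Y)=p_Y$ one checks that $(p_X,p_Y,c)$ is an endomorphism of the free triangle in $\mathsf{M}\lMod_{\mathcal{T}}$.
\item The map $c$ need not be idempotent. But $(0,0,c^2-c)$ is an endomorphism of a distinguished triangle of $\mathcal{T}$, so $c^2-c=n'\circ\mathsf{M}h=\mathsf{M}g\circ n''$ for suitable $n',n''$. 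Hence $(c^2-c)^2=n'\circ\mathsf{M}h\circ\mathsf{M}g\circ n''=0$, so $e=3c^2-2c^3$ is an $\mathsf{M}$-linear idempotent and $(p_X,p_Y,e)$ is still a triangle endomorphism.
\end{enumerate}
Only now can you split $e$ and $1-e$ by idempotent completeness and finish as in your step 4. Likewise, your step 2 claim that the original triangle is a retract of the free one needs a TR3 choice on the third vertex; $\eta$ and $\sigma$ alone do not give it.

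There is a second, smaller gap: TR3 in $\mathcal{D}=\mathsf{M}\lMod_{\mathcal{T}}$ does not ``follow once cones exist''. The fill-in $c_0$ given by TR3 in $\mathcal{T}$ need not be $\mathsf{M}$-linear. For modules $Z=(z,w)$ and $Z'=(z',w')$ you must replace it by $w'\circ\mathsf{M}(c_0)\circ s_Z$. This is the natural retraction $\mathsf{Hom}_{\mathcal{T}}(z,z')\to\mathsf{Hom}_{\mathcal{D}}(Z,Z')$ supplied by separability, and its naturality with respect to $\mathsf{M}$-linear maps shows both squares still commute. With that in place your uniqueness argument is fine, as long as the TR3 you invoke there is the one for the canonical class.
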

\begin{proof}
    We consider a triangle in $\mathsf{M}\lMod_{\mathcal{T}}$ to be distinguished if its image under $\mathsf{U}_{\mathsf{M}}$ is distinguished in $\mathcal{T}$. This gives the pre-triangulated structure on $\mathsf{M}\lMod_{\mathcal{T}}$ and moreover makes $\mathsf{T}_{\mathsf{M}}$ exact; see \cite[4.1 Theorem]{balmer} for details. 
\end{proof}

We observe that we can partially drop the idempotent completeness assumption.

\begin{lem} \label{pretriangulated without idempotent}
    Let $\mathcal{T}$ be a pre-triangulated category and $\mathsf{M}$ a separable exact monad on $\mathcal{T}$. Then $\mathsf{M} \lMod_{\mathcal{T}}$ admits a unique pre-triangulated structure making the functors $\mathsf{U}_\mathsf{M}$ and $\mathsf{T}_\mathsf{M}$ exact.  
\end{lem}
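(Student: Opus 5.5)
We plan to reduce the statement to Balmer's Theorem \ref{balmer1} by passing to idempotent completions. Let $\widetilde{\mathcal{T}}$ be the idempotent completion of $\mathcal{T}$; by Balmer's earlier result it carries a unique pre-triangulated structure making $\mathcal{T}\hookrightarrow\widetilde{\mathcal{T}}$ exact. The monad $\mathsf{M}$, together with $\eta,\mu,\sigma$, extends canonically to a separable exact monad $\widetilde{\mathsf{M}}$ on $\widetilde{\mathcal{T}}$. On an object $(x,e)$ with $e$ idempotent we set $\widetilde{\mathsf{M}}(x,e)=(\mathsf{M}x,\mathsf{M}e)$, and all the identities follow from naturality. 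Exactness holds because a distinguished triangle of $\widetilde{\mathcal{T}}$ is a direct summand of one coming from $\mathcal{T}$.

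Next we check that $\widetilde{\mathsf{M}}\lMod_{\widetilde{\mathcal{T}}}$ is idempotent complete. An idempotent $e$ of an $\widetilde{\mathsf{M}}$-module $(x,u)$ splits in $\widetilde{\mathcal{T}}$, say as $x\to y\to x$. Its image $y$ inherits the module structure $y\to x\xrightarrow{} \mathsf{M}$-action, i.e.\ $\widetilde{\mathsf{M}}y\to\widetilde{\mathsf{M}}x\xrightarrow{u}x\to y$, and the module axioms are a direct check because $e$ is a module map. Theorem \ref{balmer1} therefore gives a pre-triangulated structure on $\widetilde{\mathsf{M}}\lMod_{\widetilde{\mathcal{T}}}$. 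Its distinguished triangles are exactly those whose underlying triangle is distinguished in $\widetilde{\mathcal{T}}$.

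We then identify $\mathsf{M}\lMod_{\mathcal{T}}$ with the full subcategory of $\widetilde{\mathsf{M}}\lMod_{\widetilde{\mathcal{T}}}$ consisting of modules whose underlying object lies in $\mathcal{T}$. This subcategory is closed under suspension. The main obstacle is showing it is closed under cones, i.e.\ that it is a pre-triangulated subcategory. Take a morphism $f\colon(x,u)\to(y,v)$ in $\mathsf{M}\lMod_{\mathcal{T}}$ and complete it to a distinguished triangle $(x,u)\to(y,v)\to(z,w)\to$ in the big category. The underlying triangle is then distinguished in $\widetilde{\mathcal{T}}$ with $x,y\in\mathcal{T}$. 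We can also complete $f$ to a triangle $x\to y\to z'\to$ in $\mathcal{T}$, which stays distinguished in $\widetilde{\mathcal{T}}$. By uniqueness of cones up to isomorphism, $z\cong z'$ in $\widetilde{\mathcal{T}}$. Hence $z$ is isomorphic to an object of $\mathcal{T}$, and transporting $w$ along this isomorphism gives an isomorphic module in $\mathsf{M}\lMod_{\mathcal{T}}$. So we declare a triangle in $\mathsf{M}\lMod_{\mathcal{T}}$ distinguished iff its image under $\mathsf{U}_{\mathsf{M}}$ is distinguished in $\mathcal{T}$. This agrees with distinguishedness in $\widetilde{\mathcal{T}}$, since $\mathcal{T}$ is a full pre-triangulated subcategory.

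With this description the axioms TR1–TR3 are inherited from the ambient category, because every triangle involved lies in the subcategory up to isomorphism. Exactness of $\mathsf{U}_{\mathsf{M}}$ holds by definition. Exactness of $\mathsf{T}_{\mathsf{M}}$ follows since $\mathsf{U}_{\mathsf{M}}\mathsf{T}_{\mathsf{M}}=\mathsf{M}$ is exact.

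For uniqueness, suppose a pre-triangulated structure makes $\mathsf{U}_{\mathsf{M}}$ exact, and take a triangle $\Delta$ whose image is distinguished. Complete its first map to a distinguished triangle $\Delta'$ of the given structure. Then $\mathsf{U}_{\mathsf{M}}\Delta'$ is distinguished, and we get a comparison morphism $\mathsf{U}_{\mathsf{M}}\Delta\to\mathsf{U}_{\mathsf{M}}\Delta'$ that is an isomorphism but need not be $\mathsf{M}$-linear. We fix this using separability: the retraction of $\mathsf{U}_{\mathsf{M}}$ on Hom-sets is natural, so applying it to the comparison map yields an $\mathsf{M}$-linear map that is still compatible with the triangle maps. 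It is an isomorphism on the third term by the five-lemma-type argument (as in Balmer's proof). Thus $\Delta\cong\Delta'$ is distinguished, and conversely every distinguished triangle maps to a distinguished one by exactness of $\mathsf{U}_{\mathsf{M}}$.
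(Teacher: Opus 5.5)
Your proof is correct, and its overall strategy matches the paper's: pass to idempotent completions, invoke Theorem~\ref{balmer1}, and restrict back. The middle step is organised differently.

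\textbf{The paper's route.} The paper lifts the adjunction $(\mathsf{T}_{\mathsf{M}},\mathsf{U}_{\mathsf{M}})$ to $(\mathsf{M}\lMod_{\mathcal{T}})^{\natural}\rightleftarrows\mathcal{T}^{\natural}$ via Sun's lemma and forms $\mathsf{M}^{\natural}=\mathsf{U}^{\natural}_{\mathsf{M}}\mathsf{T}^{\natural}_{\mathsf{M}}$. It then uses separability of $\mathsf{U}^{\natural}_{\mathsf{M}}$ and Chen's criterion to show that the comparison functor $(\mathsf{M}\lMod_{\mathcal{T}})^{\natural}\to\mathsf{M}^{\natural}\lMod_{\mathcal{T}^{\natural}}$ is an equivalence. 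Idempotent completeness of the target is quoted from Balmer. Closure of $\mathsf{M}\lMod_{\mathcal{T}}$ under cones is asserted without argument.

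\textbf{Your route.} You extend $(\mathsf{M},\eta,\mu,\sigma)$ directly to $\widetilde{\mathcal{T}}$; your $\widetilde{\mathsf{M}}$ is the paper's $\mathsf{M}^{\natural}$. You check idempotent completeness of $\widetilde{\mathsf{M}}\lMod_{\widetilde{\mathcal{T}}}$ by hand. You then embed $\mathsf{M}\lMod_{\mathcal{T}}$ as the full subcategory of modules whose underlying object lies in $\mathcal{T}$. So you never need to identify the idempotent completion of the module category. The price is verifying cone-closure, which you do correctly: compare with a cone taken in $\mathcal{T}$ and transport the action along the resulting isomorphism. This fills in the step the paper leaves implicit. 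The paper's route gives the extra equivalence $(\mathsf{M}\lMod_{\mathcal{T}})^{\natural}\simeq\mathsf{M}^{\natural}\lMod_{\mathcal{T}^{\natural}}$; yours is more elementary and self-contained.

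\textbf{Uniqueness.} Your detour through the natural retraction is valid. Naturality gives the compatibility with the triangle maps. The isomorphism claim should be justified by applying the five lemma in $\mathcal{T}$ to $\mathsf{U}_{\mathsf{M}}$ of the comparison map and using that $\mathsf{U}_{\mathsf{M}}$ reflects isomorphisms. You cannot run the five lemma in the given structure, since $\Delta$ is not yet known to be distinguished there. The detour is also unnecessary. Once the canonical structure is known to be pre-triangulated, any structure making $\mathsf{U}_{\mathsf{M}}$ exact is contained in it. Two nested pre-triangulations with the same suspension coincide, by TR3 and the five lemma in the larger one; this is the paper's argument.
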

\begin{proof}
    A version of this was shown in \cite[Proposition 3.3]{sun} for equivariant triangulated categories and we proceed here with similar arguments. First off, it is known from \cite{balmer_schlichting} that $\mathcal{T}^{\natural}$, the idempotent completion of $\mathcal{T}$, admits a unique pre-triangulated structure making the inclusion $\mathcal{T}\hookrightarrow \mathcal{T}^{\natural}$ exact. Moreover, it follows from \cite[Lemma 3.11]{sun} that we can lift the adjunction between $\mathsf{M}\lMod_{\mathcal{T}}$ and $\mathcal{T}$ to an adjunction between their respective idempotent completions as in the following diagram. 
\[\begin{tikzcd}
	{\mathsf{M}\lMod_{\mathcal{T}}} && {\mathcal{T}} \\
	\\
	{(\mathsf{M}\lMod_{\mathcal{T}})^{\natural}} && {\mathcal{T}^{\natural}} \\
	\\
	&& {\mathsf{M}^{\natural}\lMod_{\mathcal{T}^{\natural}}}
	\arrow["{\mathsf{U}_{\mathsf{M}}}"', shift right=2, from=1-1, to=1-3]
	\arrow[hook, from=1-1, to=3-1]
	\arrow["{\mathsf{T}_{\mathsf{M}}}"', shift right=2, from=1-3, to=1-1]
	\arrow[hook, from=1-3, to=3-3]
	\arrow["{\mathsf{U}_{\mathsf{M}}^{\natural}}"', shift right=2, from=3-1, to=3-3]
	\arrow["\Phi"', curve={height=12pt}, from=3-1, to=5-3]
	\arrow["{\mathsf{T}_{\mathsf{M}}^{\natural}}"', shift right=2, from=3-3, to=3-1]
	\arrow["{\mathsf{T}_{\mathsf{M}^{\natural}}}"', shift right=2, from=3-3, to=5-3]
	\arrow["{\mathsf{U}_{\mathsf{M}^{\natural}}}"', shift right=2, from=5-3, to=3-3]
\end{tikzcd}\]
Regarding the lower part of the diagram, the functor $\mathsf{M}^{\natural}$ is defined to be the composite $\mathsf{U}_{\mathsf{M}}^{\natural}\circ \mathsf{T}_{\mathsf{M}}^{\natural}$, which is a monad on $\mathcal{T}^{\natural}$ and yields the comparison functor $\Phi$ (see (\ref{comparison})). Since $\mathsf{U}_{\mathsf{M}}$ is separable, it follows by \cite[Lemma 3.11]{sun} that $\mathsf{U}_{\mathsf{M}}^{\natural}$ is separable, and therefore we infer from \cite[Proposition 3.5]{xwchen} that $\Phi$ is an equivalence up to retracts, thus an equivalence as $(\mathsf{M}\lMod_{\mathcal{T}})^{\natural}$ is idempotent complete. Since $\mathcal{T}^{\natural}$ is idempotent complete, we infer that $\mathsf{M}^{\natural}\lMod_{\mathcal{T}^{\natural}}$ is idempotent complete (see \cite[2.6 Remark]{balmer}) and therefore we know from Theorem \ref{balmer1} that the latter admits a unique pre-triangulated structure such that $\mathsf{U}_{\mathsf{M}^{\natural}}$ and $\mathsf{T}_{\mathsf{M}^{\natural}}$ are exact. In view of the equivalence $\Phi$, there is a unique pre-triangulated structure on $(\mathsf{M}\lMod_{\mathcal{T}})^{\natural}$ making $\mathsf{U}_{\mathsf{M}}^{\natural}$ and $\mathsf{T}_{\mathsf{M}}^{\natural}$ exact. Since $\mathsf{M}\lMod_{\mathcal{T}}$, as a full subcategory of its idempotent completion, is closed under shifts and cones, there is an induced pre-triangulated structure, which makes $\mathsf{U}_{\mathsf{M}}$ and $\mathsf{T}_{\mathsf{M}}$ exact - by using the diagram above; explicitly, a triangle in $\mathsf{M}\lMod_{\mathcal{T}}$ is distinguished if and only if its image under $\mathsf{U}_{\mathsf{M}}$ is distinguished in $\mathcal{T}$ (which follows by the analogous description on $\mathsf{M}^{\natural}\lMod_{\mathcal{T}^{\natural}}$ - see Theorem~\ref{balmer1}) which also proves the uniqueness part (since any other pre-triangulated structure such that $\mathsf{U_M}$ is exact necessarily contains the one defined).  
\end{proof}

We will later refer to the above pre-triangulated structure as the ``canonical'' one and observe that under the setup of (\ref{comparison}), if $\mathsf{C}$ and $\mathsf{D}$ are pre-triangulated, then the comparison functor is also pre-triangulated with $\mathsf{M}\lMod_{\mathsf{C}}$ given the pre-triangulated structure of the results above. We will later need the following lemma, see the proof of \cite[2.10 Proposition]{balmer}.

\begin{lem} \label{summand}
Consider an adjunction $(\mathsf{l,e})$ of pre-triangulated categories where $\mathsf{e}$ is faithful. Then every object $x$ in the domain of $\mathsf{e}$ is a summand of $\mathsf{le}(x)$. In particular, if $\mathcal{T}$ is a pre-triangulated category and $\mathsf{M}$ is a separable exact monad on $\mathcal{T}$, then every object $x$ of $ \mathsf{M}\lMod_{\mathcal{T}}$ is a summand of $\mathsf{T}_{\mathsf{M}}\mathsf{U}_{\mathsf{M}}(x)$.
\end{lem}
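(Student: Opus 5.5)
The plan is to show that the counit of the adjunction is a split epimorphism. Faithfulness of $\mathsf{e}$ should give this, because the map $\mathsf{e}(\varepsilon_x)$ already splits for formal reasons. Throughout I take the adjoint functors to be exact; this is implicit in calling $(\mathsf{l},\mathsf{e})$ an adjunction of pre-triangulated categories.

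\textbf{Step 1: the counit becomes split after applying $\mathsf{e}$.} Let $\eta$ and $\varepsilon$ denote the unit and counit. For an object $x$ in the domain of $\mathsf{e}$, consider $\varepsilon_x\colon \mathsf{le}(x)\to x$. The triangle identity gives $\mathsf{e}(\varepsilon_x)\circ\eta_{\mathsf{e}(x)}=1_{\mathsf{e}(x)}$. Hence $\mathsf{e}(\varepsilon_x)$ is a split epimorphism.

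\textbf{Step 2: complete to a triangle and transport it along $\mathsf{e}$.} Complete $\varepsilon_x$ to a distinguished triangle
\[
\mathsf{le}(x)\xrightarrow{\varepsilon_x} x\xrightarrow{g} z\rightarrow \Sigma\,\mathsf{le}(x).
\]
Since $\mathsf{e}$ is exact, applying it yields a distinguished triangle in which $\mathsf{e}(g)\circ\mathsf{e}(\varepsilon_x)=0$. Because $\mathsf{e}(\varepsilon_x)$ is a split epimorphism, it is in particular an epimorphism, so $\mathsf{e}(g)=0$. Faithfulness of $\mathsf{e}$ then forces $g=0$.

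\textbf{Step 3: conclude the splitting.} In a distinguished triangle $a\xrightarrow{f} b\xrightarrow{0} c\to\Sigma a$, the map $f$ is a split epimorphism. To see this, apply $\mathsf{Hom}(b,-)$ to the triangle. This gives an exact sequence
\[
\mathsf{Hom}(b,a)\to\mathsf{Hom}(b,b)\xrightarrow{0}\mathsf{Hom}(b,c),
\]
so $1_b$ lifts to a map $s\colon b\to a$ with $fs=1_b$. Therefore $\varepsilon_x$ has a section, and $x$ is a direct summand of $\mathsf{le}(x)$. This uses only the pre-triangulated axioms and no idempotent completeness, because the complement is not needed as an object.

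\textbf{Step 4: the ``in particular''.} By Lemma \ref{pretriangulated without idempotent}, $\mathsf{M}\lMod_{\mathcal{T}}$ carries the canonical pre-triangulated structure, for which $\mathsf{T}_{\mathsf{M}}$ and $\mathsf{U}_{\mathsf{M}}$ are exact. By Lemma \ref{adjunction}, $(\mathsf{T}_{\mathsf{M}},\mathsf{U}_{\mathsf{M}})$ is an adjoint pair. The forgetful functor $\mathsf{U}_{\mathsf{M}}$ is the identity on morphisms, so it is faithful. Applying the first part with $\mathsf{l}=\mathsf{T}_{\mathsf{M}}$ and $\mathsf{e}=\mathsf{U}_{\mathsf{M}}$ gives the claim.

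\textbf{Expected obstacle.} The only delicate point is Step 2, where $\mathsf{e}$ must send the chosen triangle to a distinguished one. This is exactly where exactness of $\mathsf{e}$ is used, and for $\mathsf{U}_{\mathsf{M}}$ it is supplied by Lemma \ref{pretriangulated without idempotent}. Note also that Step 2 needs only that $\mathsf{e}(\varepsilon_x)$ is an epimorphism, not that the splitting is natural, so separability of $\mathsf{e}$ is not required.
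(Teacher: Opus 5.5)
Your proof is correct and is essentially the argument the paper defers to, namely the proof of Balmer's 2.10 Proposition. There, $\mathsf{e}(\varepsilon_x)$ is split by the triangle identity, so the next map $g$ in the triangle on $\varepsilon_x$ satisfies $\mathsf{e}(g)=0$; hence $g=0$ by faithfulness and $\varepsilon_x$ splits, with the ``in particular'' following from faithfulness of $\mathsf{U}_{\mathsf{M}}$ and the pre-triangulated structure of Lemma~\ref{pretriangulated without idempotent}.

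One small correction: Step 2 only needs $\mathsf{e}$ additive, since $g\circ\varepsilon_x=0$ already holds before applying $\mathsf{e}$. So exactness of $\mathsf{e}$ is not the delicate point; what is genuinely used is that the domain of $\mathsf{e}$ is pre-triangulated, and that same triangle structure also supplies the complement of $x$ in $\mathsf{le}(x)$, as the fibre of $\varepsilon_x$.
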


Although Theorem \ref{balmer1} produces, in principle, only a pre-triangulated category, all the examples we will work with (and all the pre-triangulated categories that we know of) satisfy the octahedral axiom. For this reason, we agree on the following convention. 
\begin{con} \label{convention}
    From now on we will only work with triangulated categories and assume that (under the assumptions of Theorem \ref{balmer1}) the category  $\mathsf{M}\lMod_{\mathcal{T}}$, with the canonical pre-triangulated structure of Theorem \ref{balmer1} and Lemma \ref{pretriangulated without idempotent}, is triangulated. Alternatively, the reader may assume that whenever we say ``triangulated'', we mean a triangulation of order $n\geq 3$ in the sense of \cite[Section 5]{balmer}, in which case it follows that the pre-triangulated structure given on $\mathsf{M}\lMod_{\mathcal{T}}$ is triangulated, see \cite[5.1.7 Main Theorem]{balmer}.
\end{con}

For a triangulated category $\mathcal{T}$ with coproducts, we say that a monad $\mathsf{M}$ is \emph{smashing} \cite{balmer2} if it commutes with arbitrary coproducts. We denote the set of compact object of $\mathcal{T}$ by $\mathcal{T}^{\mathsf{c}}$ and recall that $\mathcal{T}$ is \textit{compactly generated} if $\mathcal{T}^{\mathsf{c}}$ is small and $(\mathcal{T}^{\mathsf{c}})^\perp =0$.
The following result is proved in \cite[4.2 Theorem]{balmer2}.

\begin{thm} \label{balmer2} (Balmer) Let $\mathcal{T}$ be a compactly generated triangulated category and $\mathsf{M}$ a separable exact smashing monad on $\mathcal{T}$. Then the category $\mathcal{D}\coloneqq \mathsf{M}\lMod_{\mathcal{T}}$ is compactly generated and 
\[
\mathcal{D}^{\mathsf{c}}=\mathsf{thick}(\mathsf{T}_{\mathsf{M}}(\mathcal{T}^{\mathsf{c}})).
\]
If, moreover, $\mathsf{M}(\mathcal{T}^{\mathsf{c}})\subseteq \mathcal{T}^{\mathsf{c}}$, then the equality $(\mathsf{M}\lMod_\mathcal{T})^{\mathsf{c}}=\mathsf{M}\lMod_{\mathcal{T}^{\mathsf{c}}}$
of subcategories of $\mathcal{D}$ holds. 
\end{thm}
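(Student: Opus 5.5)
Set $\mathcal{D}\coloneqq\mathsf{M}\lMod_{\mathcal{T}}$ with the canonical triangulated structure of Lemma \ref{pretriangulated without idempotent}. The plan is to show that $\mathsf{T}_{\mathsf{M}}$ preserves compact objects, that $\mathsf{T}_{\mathsf{M}}(\mathcal{T}^{\mathsf{c}})$ generates $\mathcal{D}$, and then to deduce both descriptions of $\mathcal{D}^{\mathsf{c}}$ from Neeman's theorem together with Lemma \ref{summand}.

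First, coproducts in $\mathcal{D}$. Given a family $(x_i,u_i)$, since $\mathsf{M}$ is smashing, $\mathsf{M}(\coprod x_i)\cong\coprod \mathsf{M}(x_i)$. Hence $\coprod u_i$ is an action on $\coprod x_i$, and this object is the coproduct in $\mathcal{D}$. In particular $\mathsf{U}_{\mathsf{M}}$ preserves coproducts.

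Compactness. For $c\in\mathcal{T}^{\mathsf{c}}$ the adjunction of Lemma \ref{adjunction} gives $\mathsf{Hom}_{\mathcal{D}}(\mathsf{T}_{\mathsf{M}}c,-)\cong\mathsf{Hom}_{\mathcal{T}}(c,\mathsf{U}_{\mathsf{M}}-)$. Because $\mathsf{U}_{\mathsf{M}}$ preserves coproducts, $\mathsf{T}_{\mathsf{M}}c$ is compact in $\mathcal{D}$.

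Generation. Suppose $\mathsf{Hom}_{\mathcal{D}}(\mathsf{T}_{\mathsf{M}}c,\Sigma^n x)=0$ for all $c\in\mathcal{T}^{\mathsf{c}}$ and all $n$. Since $\mathsf{U}_{\mathsf{M}}$ commutes with suspension, adjunction gives $\mathsf{U}_{\mathsf{M}}x\in(\mathcal{T}^{\mathsf{c}})^\perp=0$. As $\mathsf{U}_{\mathsf{M}}$ is faithful (indeed separable), $x=0$.

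Smallness. The class $\mathsf{T}_{\mathsf{M}}(\mathcal{T}^{\mathsf{c}})$ is essentially small, so $\mathcal{D}$ is compactly generated by this set. Neeman's theorem on compactly generated triangulated categories then gives $\mathcal{D}^{\mathsf{c}}=\mathsf{thick}(\mathsf{T}_{\mathsf{M}}(\mathcal{T}^{\mathsf{c}}))$. Since $\mathcal{D}^{\mathsf{c}}$ is contained in this thick closure of a small set, it is small.

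The main obstacle is that Neeman's theorem uses idempotent completeness of thick closures. This holds here: $\mathcal{D}$ has countable coproducts, so it is idempotent complete, and hence the Bousfield--Neeman argument applies. One must also check that the canonical structure is genuinely triangulated; this is Convention \ref{convention}.

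Second statement. Assume $\mathsf{M}(\mathcal{T}^{\mathsf{c}})\subseteq\mathcal{T}^{\mathsf{c}}$, and identify $\mathsf{M}\lMod_{\mathcal{T}^{\mathsf{c}}}$ with the modules $(x,u)$ having $x\in\mathcal{T}^{\mathsf{c}}$.
\begin{itemize}
\item[$\supseteq$:] Let $x$ be such a module. By Lemma \ref{summand}, $x$ is a summand of $\mathsf{T}_{\mathsf{M}}\mathsf{U}_{\mathsf{M}}x$. The latter is compact by the compactness step, so $x$ is compact.
\item[$\subseteq$:] Let $x\in\mathcal{D}^{\mathsf{c}}$. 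Then $\mathsf{U}_{\mathsf{M}}x$ is compact, because $\mathsf{U}_{\mathsf{M}}$ has a coproduct-preserving right adjoint. Indeed, for a separable monad one checks that $\mathsf{U}_{\mathsf{M}}$ is also left adjoint to $\mathsf{T}_{\mathsf{M}}$, using $\sigma$ to build the unit $x\to \mathsf{M}x$ (the Frobenius-type property of separable monads), and $\mathsf{T}_{\mathsf{M}}$ preserves coproducts since $\mathsf{M}$ does.
\end{itemize}
If this two-sided adjunction is unavailable, the alternative for $\subseteq$ is to argue via the thick closure. The modules with compact underlying object form a thick subcategory of $\mathcal{D}$: they are closed under cones (using triangles detected by $\mathsf{U}_{\mathsf{M}}$) and under summands. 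This subcategory contains $\mathsf{T}_{\mathsf{M}}(\mathcal{T}^{\mathsf{c}})$ because $\mathsf{M}(\mathcal{T}^{\mathsf{c}})\subseteq\mathcal{T}^{\mathsf{c}}$, so it contains $\mathsf{thick}(\mathsf{T}_{\mathsf{M}}(\mathcal{T}^{\mathsf{c}}))=\mathcal{D}^{\mathsf{c}}$. This second route avoids the adjunction entirely, so I would use it.
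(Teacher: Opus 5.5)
Your proof is correct. It is essentially the standard argument behind \cite[4.2 Theorem]{balmer2}, which the paper cites rather than reproves: because $\mathsf{M}$ is smashing, $\mathsf{U}_{\mathsf{M}}$ preserves coproducts, so $\mathsf{T}_{\mathsf{M}}(\mathcal{T}^{\mathsf{c}})$ consists of compacts; faithfulness of $\mathsf{U}_{\mathsf{M}}$ makes these generate; Neeman's theorem gives the thick closure; and the second part follows from Lemma~\ref{summand} together with your thick-subcategory argument for $\subseteq$.

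You were right to discard the first route for $\subseteq$, because its premise is false: separable monads are not Frobenius in general. Take $\mathsf{M}=-\otimes^{\mathbb{L}}_{\mathbb{Z}}\mathbb{Q}$ on $\mathsf{D}(\mathbb{Z})$, which is separable since $\mathbb{Q}\otimes_{\mathbb{Z}}\mathbb{Q}\cong\mathbb{Q}$, with $\mathcal{D}\simeq\mathsf{D}(\mathbb{Q})$. There the forgetful functor sends the compact object $\mathbb{Q}$ to a non-compact object of $\mathsf{D}(\mathbb{Z})$. It therefore cannot be left adjoint to the coproduct-preserving functor $\mathsf{T}_{\mathsf{M}}$; its right adjoint is $\mathbb{R}\mathsf{Hom}_{\mathbb{Z}}(\mathbb{Q},-)$.
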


In order to explain the second claim of the theorem above, we consider any subcategory $\mathcal{U}$ of $\mathcal{T}$. Then, if the monad $\mathsf{M}$ restricts to $\mathcal{U}$, we view $\mathsf{M}\lMod_{\mathcal{U}}$ as a full subcategory of $\mathsf{M}\lMod_{\mathcal{T}}$ in the obvious way. This justifies the terminology above and we use the same throughout the paper. The latter full subcategory is precisely the subcategory of $\mathsf{M}\lMod_{\mathcal{T}}$ that consists of the objects $(x,u)$ with $x\in\mathcal{U}$. In fact, we have the following. 
\begin{lem} \label{underdog}
    Let $\mathcal{T}$ be a triangulated category, $\mathsf{M}$ a separable exact monad and consider $\mathcal{D}=\mathsf{M}\lMod_{\mathcal{T}}$. Assume that the adjunction $(\mathsf{T}_{\mathsf{M}},\mathsf{U}_{\mathsf{M}})$ restricts to an adjunction between full triangulated subcategories $\mathcal{V}$ and $\mathcal{U}$ of $\mathcal{D}$ and $\mathcal{T}$ respectively. If $\mathcal{V}$ is thick, then $\mathcal{V}=\mathsf{M}\lMod_{\mathcal{U}}$ as subcategories of $\mathcal{D}$. 
\end{lem}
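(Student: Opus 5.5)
We prove the two inclusions $\mathcal{V}\subseteq\mathsf{M}\lMod_{\mathcal{U}}$ and $\mathsf{M}\lMod_{\mathcal{U}}\subseteq\mathcal{V}$ separately. First we make the hypothesis precise: the restriction of the adjunction means $\mathsf{U}_{\mathsf{M}}(\mathcal{V})\subseteq\mathcal{U}$ and $\mathsf{T}_{\mathsf{M}}(\mathcal{U})\subseteq\mathcal{V}$. In particular $\mathsf{M}=\mathsf{U}_{\mathsf{M}}\mathsf{T}_{\mathsf{M}}$ restricts to $\mathcal{U}$, so $\mathsf{M}\lMod_{\mathcal{U}}$ makes sense. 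As explained before the statement, we identify it with the full subcategory of $\mathcal{D}$ on the objects $(x,u)$ with $x\in\mathcal{U}$.

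\textbf{First inclusion.} Let $(x,u)\in\mathcal{V}$. Then $x=\mathsf{U}_{\mathsf{M}}(x,u)\in\mathcal{U}$, because $\mathsf{U}_{\mathsf{M}}$ maps $\mathcal{V}$ into $\mathcal{U}$. Hence $(x,u)\in\mathsf{M}\lMod_{\mathcal{U}}$. This direction uses neither separability nor thickness.

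\textbf{Second inclusion.} Let $(x,u)\in\mathcal{D}$ with $x\in\mathcal{U}$. Then $\mathsf{T}_{\mathsf{M}}\mathsf{U}_{\mathsf{M}}(x,u)=\mathsf{T}_{\mathsf{M}}(x)$ lies in $\mathcal{V}$, since $\mathsf{T}_{\mathsf{M}}$ maps $\mathcal{U}$ into $\mathcal{V}$. Because $\mathsf{M}$ is separable, $\mathsf{U}_{\mathsf{M}}$ is faithful (indeed separable). So Lemma~\ref{summand} shows that $(x,u)$ is a direct summand of $\mathsf{T}_{\mathsf{M}}\mathsf{U}_{\mathsf{M}}(x,u)$ in $\mathcal{D}$. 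Since $\mathcal{V}$ is thick, it is closed under direct summands, and therefore $(x,u)\in\mathcal{V}$.

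The only step with real content is the summand argument in the second inclusion. It is also where separability (through Lemma~\ref{summand}) and thickness of $\mathcal{V}$ are needed. In the summand step I would check that the retraction exhibiting $(x,u)$ as a summand of $\mathsf{T}_{\mathsf{M}}(x)$ is a morphism in $\mathcal{D}$. Concretely, $u\colon \mathsf{T}_{\mathsf{M}}(x)\to(x,u)$ is split in $\mathcal{D}$ by the module map $\mathsf{M}(u)\circ\sigma_x\circ\eta_x$. Lemma~\ref{summand} already provides this, so no further work is expected.
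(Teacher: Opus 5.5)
Your proof is correct and is essentially the paper's argument: $\mathcal{V}\subseteq\mathsf{M}\lMod_{\mathcal{U}}$ follows from $\mathsf{U}_{\mathsf{M}}(\mathcal{V})\subseteq\mathcal{U}$, and the reverse inclusion from $\mathsf{T}_{\mathsf{M}}(x)\in\mathcal{V}$ together with Lemma~\ref{summand} and thickness of $\mathcal{V}$. One small correction: $\mathsf{U}_{\mathsf{M}}$ is faithful for any monad, so separability is not what gives faithfulness; it is what makes $\mathsf{U}_{\mathsf{M}}$ exact for the canonical structure, and equivalently what makes your explicit section $\mathsf{M}(u)\circ\sigma_x\circ\eta_x$ a module map, which it indeed is.
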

\begin{proof}
    It is enough to show that $\mathcal{V}$ consists precisely of the objects $(x,u)$ of $\mathcal{D}$ such that $x$ belongs in $\mathcal{U}$. One inclusion is clear. For the converse, we consider an object $(x,u)$ of $\mathcal{D}$ such that $x$ belongs in $\mathcal{U}$, for which it follows that $\mathsf{T}_{\mathsf{M}}(x)$ belongs in $\mathcal{V}$. Since $(x,u)$ is a summand of the latter and $\mathcal{V}$ is assumed to be thick, the claim follows. 
\end{proof}

\subsubsection{Separable algebras} Let $R$ be a commutative ring. An $R$-algebra $A$ is called \emph{separable} if the multiplication map $A\otimes_RA\to A$ admits a section. The following result was shown in \cite[6.5~Theorem]{balmer} and applies in particular to \'etale algebras, see \cite[Theorem]{balmer}.

\begin{thm} \label{etale Balmer}
    Let $R$ be a commutative ring and $A$ a flat separable $R$-algebra. The functor $\mathsf{M}=-\otimes_RA$ is a monad on $\mathsf{D}(R)$ \textnormal{(}i.e.\ $A$ is a ``ring object'' in $\mathsf{D}(R)$\textnormal{)} and there is an equivalence 
    \[
    \mathsf{D}(A)\simeq \mathsf{M}\lMod{_{\mathsf{D}(R)}}
    \]
    of triangulated categories.     
\end{thm}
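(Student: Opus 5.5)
The plan is to realise $\mathsf{D}(A)$ as the source of an adjunction over $\mathsf{D}(R)$ and then apply the comparison-functor machinery of (\ref{comparison}) together with \cite[Proposition 3.5]{xwchen}.

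\emph{The monad.} Since $A$ is flat over $R$, the functor $-\otimes_RA=-\otimes^{\mathsf{L}}_RA$ is exact on $\mathsf{D}(R)$. The unit $R\to A$ gives $\eta$, and the multiplication $A\otimes_RA\to A$ gives $\mu$, using $(X\otimes_RA)\otimes_RA\cong X\otimes_R(A\otimes_RA)$. Associativity and unitality of $A$ then yield the monad axioms. If $s\colon A\to A\otimes_RA$ is an $A$-bimodule section of the multiplication, then $\sigma=-\otimes_R s$ splits $\mu$. The identities $\mathsf{M}\mu\circ\sigma\mathsf{M}=\sigma\mu=\mu\mathsf{M}\circ\mathsf{M}\sigma$ follow from $s$ being a bimodule map. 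All these transformations commute with suspension, so $\mathsf{M}$ is a separable exact monad.

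\emph{The adjunction.} Let $\mathsf{l}=-\otimes_RA\colon\mathsf{D}(R)\to\mathsf{D}(A)$ be induction and let $\mathsf{e}\colon\mathsf{D}(A)\to\mathsf{D}(R)$ be restriction. Both are exact, $(\mathsf{l},\mathsf{e})$ is an adjoint pair, and $\mathsf{el}=\mathsf{M}$ with the monad structure above. This produces the comparison functor $\Phi\colon\mathsf{D}(A)\to\mathsf{M}\lMod_{\mathsf{D}(R)}$, which is exact for the canonical structure of Lemma \ref{pretriangulated without idempotent}.

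\emph{Separability of $\mathsf{e}$.} I would show that restriction is separable. Write the section as $s(1)=\sum_i a_i\otimes b_i$, a Casimir element with $\sum a_ib_i=1$ and $as(1)=s(1)a$. For $A$-complexes $X,Y$ and an $R$-linear $f\colon X\to Y$, define $\tilde f(x)=\sum_i f(xa_i)b_i$. This map is $A$-linear, equals $f$ when $f$ is already $A$-linear, and is natural in $X$ and $Y$. It is defined at the level of complexes, so to make it work in the derived category I would instead use the monad description. The counit $\mathsf{le}(X)\to X$ has the natural $A$-linear section $x\mapsto\sum_i xa_i\otimes b_i$, which is exactly what separability of $\mathsf{e}$ amounts to for an adjunction. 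This step needs care, since the section must be defined on K-projective or K-flat models and must be natural in $\mathsf{D}(A)$. Flatness of $A$ ensures that $X\otimes_RA$ computes the derived tensor product for any $X$. By \cite[Proposition 3.5]{xwchen}, $\Phi$ is then an equivalence up to retracts.

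\emph{Essential surjectivity.} $\mathsf{D}(A)$ has countable coproducts, so it is idempotent complete. Hence $\Phi$ is fully faithful with idempotent complete source, and every module $(x,u)$ is a summand of $\Phi(\mathsf{l}x)=\mathsf{T}_{\mathsf{M}}(x)$ by Lemma \ref{summand}. The corresponding idempotent lifts along the fully faithful $\Phi$ to $\mathsf{l}x$ and splits there, so $\Phi$ is essentially surjective. An exact equivalence gives the triangulated equivalence.

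The main obstacle I expect is the derived-category-level naturality of the section of the counit in the separability step.
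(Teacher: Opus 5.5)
Your argument is correct. It is essentially the standard proof behind \cite[6.5~Theorem]{balmer}, which the paper cites rather than reproves: the Casimir element $\sum_i a_i\otimes b_i$ naturally splits the counit $X\otimes_RA\to X$, so restriction is separable, and idempotent completeness of $\mathsf{D}(A)$ upgrades the equivalence up to retracts from \cite[Proposition 3.5]{xwchen} to an equivalence.

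The step you flag as the main obstacle is not actually one. Because $A$ is flat, both $\mathsf{l}$ and $\mathsf{e}$ come from functors on complexes that preserve quasi-isomorphisms, so the derived counit is just the chain-level multiplication map. The section $x\mapsto\sum_i xa_i\otimes b_i$ is natural for all chain maps, so it descends to $\mathsf{D}(A)$ by the universal property of localization, and no K-projective or K-flat replacements are needed.
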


\subsubsection{Equivariant triangulated categories} \label{equivariant triangulated categories}
For an in depth exposition of group actions on categories we refer to \cite{xwchen,elagin,sun}; instead let us quickly recall some facts. 

Let $\mathsf{A}$ be an additive category and $G$ a finite group. Denote by $\mathsf{Aut}(\mathsf{A})$ the group of isomorphism classes of autoequivalences of $\mathsf{A}$. A group action of $G$ on $\mathsf{A}$ is a group homomorphism $\rho \colon G \to \mathsf{Aut}(\mathsf{A})$, $g \mapsto \rho_g$, such that the natural isomorphisms $\theta_{g,h}\colon \rho_g \rho_h \to \rho_{gh}$ satisfy $\theta_{g,hk} \circ \rho_g\theta_{h,k} = \theta_{gh,k} \circ \theta_{g,h}\rho_k$ (i.e.~they are associative). Given a group action on (any) category, one defines the \emph{equivariant category} $\mathsf{A}^G$ to be the category with objects pairs $(x, \phi)$, where $\phi=\{ \phi_g \} _{g \in G}$ is a family of isomorphisms $\phi_g \colon x \to \rho_g(x)$ indexed by $G$, called a \emph{linearization} of $x$, satisfying $\theta_{g,h} \circ \rho_g(\phi_h) \circ \phi_g =  \phi_{gh}$. The morphisms of the equivariant category $\mathsf{A}^G$ are the morphisms of $\mathsf{A}$ that respect the linearizations. Note that $\mathsf{A}^G$ is additive (resp. abelian) when $\mathsf{A}$ is additive (resp. abelian).

Under the above setup, there exist the following functors: the \emph{forgetful} functor $\mathsf{F}\colon\mathsf{A}^{G}\rightarrow\mathsf{A}$ and the \emph{induction} functor $\mathsf{Ind}\colon\mathsf{A}\rightarrow\mathsf{A}^{G}$, which form an adjoint triple $(\mathsf{Ind},\mathsf{F},\mathsf{Ind})$. If $|G|$ is invertible in $\mathsf{A}$, then the monad $\mathsf{M}=\mathsf{F}\circ \mathsf{Ind}$, which is given by $\mathsf{M}(x)=\oplus_{g\in G}\rho_g(x)$, is separable; see for instance \cite[Lemma~2.14(2)]{sun}. Moreover, we know that the comparison functor $\mathsf{A}^{G}\rightarrow\mathsf{M}\lMod_{\mathsf{A}}$ is an equivalence; see for instance \cite[Proposition 3.11]{elagin}.

For a triangulated category $\mathcal{T}$, an action of a finite group $G$ on $\mathcal{T}$ is an action as above, where $\mathsf{Aut}(\mathcal{T})$ denotes the group of isomorphism classes of exact autoequivalences of $\mathcal{T}$.
By the preceding discussion, Theorem \ref{balmer1} and Lemma \ref{pretriangulated without idempotent}, it follows that $\mathcal{T}^{G}$ admits a unique pre-triangulated structure such that the forgetful functor is exact; see \cite[Proposition 3.3]{sun} for details. In this case, the induced monad $\mathsf{M}$ is exact and smashing, as a consequence of the ambidextrous adjunction.

\begin{prop}\label{compactly generated equivariant}
    Assume an action of a finite group $G$  on a triangulated category $\mathcal{T}$ which has arbitrary coproducts, such that $|G|$ is invertible in $\mathcal{T}$. Then $\mathcal{T}$ is compactly generated if and only if $\mathcal{T}^{G}$ is compactly generated, in which case
    \[
    (\mathcal{T}^G)^{\mathsf{c}}=(\mathcal{T}^{\mathsf{c}})^G
    \]
    as subcategories of $\mathcal{T}^G$. 
\end{prop}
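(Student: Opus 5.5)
The plan is to reduce both directions to Theorem \ref{balmer2} and the ambidextrous adjunction $(\mathsf{Ind},\mathsf{F},\mathsf{Ind})$, identifying $\mathcal{T}^G$ with $\mathsf{M}\lMod_{\mathcal{T}}$ through the comparison equivalence for $\mathsf{M}=\mathsf{F}\circ\mathsf{Ind}$, $\mathsf{M}(x)=\oplus_{g}\rho_g(x)$. The monad $\mathsf{M}$ is separable since $|G|$ is invertible, exact, and smashing, because $\mathsf{Ind}$ and $\mathsf{F}$ are left adjoints and so preserve coproducts. Moreover $\mathcal{T}^G$ has coproducts, computed in $\mathcal{T}$ with the induced linearizations.

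\emph{($\Rightarrow$)} If $\mathcal{T}$ is compactly generated, Theorem \ref{balmer2} shows that $\mathcal{T}^G\simeq\mathsf{M}\lMod_{\mathcal{T}}$ is compactly generated. Each $\rho_g$ is an exact autoequivalence, so it preserves compact objects, and hence $\mathsf{M}(\mathcal{T}^{\mathsf c})\subseteq\mathcal{T}^{\mathsf c}$ since finite sums of compacts are compact. The second part of Theorem \ref{balmer2} then gives $(\mathcal{T}^G)^{\mathsf c}=\mathsf{M}\lMod_{\mathcal{T}^{\mathsf c}}$. Transported along the comparison equivalence, this is exactly $(\mathcal{T}^{\mathsf c})^G$, the objects $(x,\phi)$ with $x$ compact. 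The only point to check is that the comparison functor sends $(x,\phi)$ to a module with underlying object $x$, which holds because $\mathsf{e}=\mathsf{U}_{\mathsf M}\circ\Phi$.

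\emph{($\Leftarrow$)} Suppose $\mathcal{T}^G$ is compactly generated. I will use $\mathsf{Ind}$ and $\mathsf{F}$ directly.
\begin{itemize}
\item \emph{$\mathsf{F}$ preserves compacts.} Its right adjoint $\mathsf{Ind}$ preserves coproducts, so for compact $c\in\mathcal{T}^G$ we get $\mathsf{Hom}_{\mathcal T}(\mathsf F c,\oplus x_i)\cong\mathsf{Hom}(c,\oplus\mathsf{Ind}\,x_i)\cong\oplus\mathsf{Hom}(\mathsf F c,x_i)$.
\item \emph{The set $\{\mathsf F c\}$, with $c$ ranging over $(\mathcal{T}^G)^{\mathsf c}$, generates.} If $\mathsf{Hom}(\mathsf F c,\Sigma^n y)=0$ for all such $c$ and $n$, then by adjunction $\mathsf{Hom}(c,\Sigma^n\mathsf{Ind}\,y)=0$, so $\mathsf{Ind}\,y=0$. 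Since $y$ is a summand of $\mathsf{F}\mathsf{Ind}(y)=\mathsf{M}(y)\supseteq\rho_e(y)\cong y$, this gives $y=0$.
\item \emph{Smallness.} $\mathcal{T}^{\mathsf c}$ is essentially small: it is the thick closure of the set $\{\mathsf F c\}$ by the standard Neeman argument, and each compact $x$ is in any case a summand of $\mathsf F\mathsf{Ind}(x)$ with $\mathsf{Ind}(x)$ compact, because $\mathsf{Ind}$ is left adjoint to the coproduct-preserving $\mathsf F$.
\end{itemize}
Hence $\mathcal{T}$ is compactly generated, and the equality of compacts follows from the first direction.

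The main obstacle I expect is bookkeeping rather than mathematics: making sure the identification $\mathcal{T}^G\simeq\mathsf{M}\lMod_{\mathcal T}$ is compatible with coproducts, triangulations and underlying objects, so that the equality stated in Theorem \ref{balmer2} can be read as an equality of subcategories of $\mathcal{T}^G$. If this becomes awkward, the equality of compacts can also be checked directly from the adjunctions above. The inclusion $(\mathcal{T}^G)^{\mathsf c}\subseteq(\mathcal{T}^{\mathsf c})^G$ holds because $\mathsf F$ preserves compacts. The reverse inclusion holds because $\mathsf F$ reflects compactness on objects: $(x,\phi)$ is a summand of $\mathsf{Ind}\,\mathsf F(x,\phi)$ (Lemma \ref{summand}), and $\mathsf{Ind}$ preserves compacts.
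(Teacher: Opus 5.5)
Your proposal is correct and follows the paper's proof. One direction comes from Theorem \ref{balmer2}; for the converse, $\mathsf{F}$ sends compact generators of $\mathcal{T}^G$ to compact generators of $\mathcal{T}$, and the equality of compacts follows because $(x,\phi)$ is a summand of $\mathsf{Ind}(x)$ while $\mathsf{F}$ and $\mathsf{Ind}$ both preserve compact objects. Your extra care about the essential smallness of $\mathcal{T}^{\mathsf{c}}$, and reading the equality off the second part of Theorem \ref{balmer2}, only fills in details the paper leaves implicit.
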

\begin{proof}
    One direction is implied by Theorem~\ref{balmer2}. For the converse it is direct to verify that a set of compact generators $\{(x_i,\phi_i)\}$ of $\mathcal{T}^G$ is mapped to a set of compact generators of $\mathcal{T}$ via the forgetful functor. Moreover, any object $(x,\phi)$ of $\mathcal{T}^{G}$ is a summand of $\mathsf{Ind}(x)$, by Lemma~\ref{summand}, from which we infer that $(x,\phi)$ is compact if and only if $x$ is compact, and thus the equality in the statement holds.
\end{proof}

We also need the following result, which is due to Elagin \cite[Theorem 7.1, Remark 7.2]{elagin}, phrased in a slightly more general form; see \cite[Example 3.20]{sun}. 

\begin{thm} \label{equivariant derived} 
Let $\mathcal{A}$ be an abelian category and $G$ a finite group acting on $\mathcal{A}$ such that $|G|$ is invertible in $\mathcal{A}$. Then $\mathsf{D^?}(\mathcal{A})$ admits a natural $G$-action and there exists an exact functor $\mathsf{D^?}(\mathcal{A}^G) \xrightarrow[]{} \mathsf{D^?}(\mathcal{A})^G$ which is equivalence up to retracts, where $\mathsf{?}$ is any of the symbols $\{\varnothing, \mathsf{+,-,b} \}$.
\end{thm}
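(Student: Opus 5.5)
The plan is to build the functor by deriving the exact induction/forgetful adjunction between $\mathcal{A}^G$ and $\mathcal{A}$, and then to recognise its source as an Eilenberg--Moore category via separability. Since $|G|$ is invertible, the functors $\mathsf{F}\colon\mathcal{A}^G\to\mathcal{A}$ and $\mathsf{Ind}\colon\mathcal{A}\to\mathcal{A}^G$ are exact functors of abelian categories. They form an ambidextrous adjunction, and the monad $\mathsf{M}=\mathsf{F}\circ\mathsf{Ind}=\oplus_g\rho_g$ is separable. Exactness lets both functors, together with the units and counits, pass degreewise to $\mathsf{D}^?(\mathcal{A}^G)$ and $\mathsf{D}^?(\mathcal{A})$ for every $?\in\{\varnothing,+,-,\mathsf{b}\}$. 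The action $\rho_g$ on $\mathcal{A}$ is by exact autoequivalences, so it induces exact autoequivalences of $\mathsf{D}^?(\mathcal{A})$. The isomorphisms $\theta_{g,h}$ also pass to the derived level and still satisfy the associativity condition, giving the natural $G$-action on $\mathsf{D}^?(\mathcal{A})$.

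Next I would identify the derived monad $\mathsf{D}(\mathsf{F})\mathsf{D}(\mathsf{Ind})$ with $\oplus_g\rho_g$ on $\mathsf{D}^?(\mathcal{A})$, i.e.\ with the monad attached to the induced action. This gives a comparison functor $\Phi\colon\mathsf{D}^?(\mathcal{A}^G)\to\mathsf{M}\lMod_{\mathsf{D}^?(\mathcal{A})}\simeq\mathsf{D}^?(\mathcal{A})^G$, as in (\ref{comparison}) and \cite[Proposition 3.11]{elagin}. By \cite[Proposition 3.5]{xwchen}, $\Phi$ is an equivalence up to retracts as soon as the derived forgetful functor $\mathsf{D}(\mathsf{F})$ is separable. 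To see that it is, note that separability of $\mathsf{F}$ on the abelian level means that the unit $\mathrm{Id}\to\mathsf{Ind}\,\mathsf{F}$ (for the adjunction $(\mathsf{F},\mathsf{Ind})$) splits naturally. Concretely, the splitting is the averaging map $\frac{1}{|G|}\sum_g$ composed with the linearization. Being a natural transformation of exact functors, this splitting passes to complexes and hence to the derived category. So $\mathsf{D}(\mathsf{F})$ is faithful, indeed separable, and the comparison functor is fully faithful with every object a retract of an object in its image.

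Finally, exactness of $\Phi$ follows from the remark after Lemma \ref{summand}. There it is noted that the comparison functor between pre-triangulated categories is exact for the canonical structure of Lemma \ref{pretriangulated without idempotent}, since a triangle is distinguished exactly when its image under $\mathsf{U}_\mathsf{M}$ is, and $\mathsf{U}_\mathsf{M}\Phi=\mathsf{D}(\mathsf{F})$ is exact.

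The main obstacle is the bookkeeping in the first step: one has to check that the derived functors of $\rho_g$, together with the $\theta_{g,h}$, genuinely form a group action, and that the derived monad coincides coherently, including its multiplication, with the monad of that action. I would handle this by working at the level of complexes, where every construction is degreewise and strictly compatible. Only at the end would I pass to the localization, using that exact functors preserve quasi-isomorphisms.
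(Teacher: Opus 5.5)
Your proof is correct. It is essentially the standard argument behind this result, which the paper does not prove itself but quotes from \cite[Theorem 7.1, Remark 7.2]{elagin} and \cite[Example 3.20]{sun}: split the unit of $(\mathsf{F},\mathsf{Ind})$ by $\frac{1}{|G|}$ times the counit of $(\mathsf{Ind},\mathsf{F})$, pass this splitting and the identification of the monad with $\oplus_g\rho_g$ degreewise to $\mathsf{D}^?$, and conclude with \cite[Proposition 3.5]{xwchen} and \cite[Proposition 3.11]{elagin}, which is the same pattern the paper follows in the proof of Proposition \ref{derived category of dg skew}. Two cosmetic slips: exactness of $\mathsf{F}$ and $\mathsf{Ind}$ does not need $|G|$ invertible (only the splitting does), and the remark that the comparison functor is exact for the canonical structure comes right after Lemma \ref{pretriangulated without idempotent}, not Lemma \ref{summand}.
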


\subsection{Intrinsic subcategories of a triangulated category and regularity} We recall several concepts from \cite{regular}. Given a triangulated category $\mathcal{T}$ (typically with coproducts) and a class $\mathcal{X}$ of objects in $\mathcal{T}$, we consider the following subcategories of $\mathcal{T}$.
\[
\begin{aligned}
\mathcal{X}^{\padova}&\coloneqq \{t\in\mathcal{T} \ | \  \forall x\in\mathcal{X}, \exists n_x\in\mathbb{Z}: \mathsf{Hom}_{\mathcal{T}}(x,t[n])=0 \text{ for } |n|>n_x\}.  \\ 
^{\padova}\mathcal{X}&\coloneqq \{t\in\mathcal{T} \ | \  \forall x\in\mathcal{X}, \exists n_x\in\mathbb{Z}: \mathsf{Hom}_{\mathcal{T}}(t,x[n])=0 \text{ for } |n|>n_x\}.
\end{aligned}
\]
When $\mathcal{T}$ is $k$-linear, for a commutative noetherian ring $k$, we also consider
\[
\begin{aligned}
    \mathcal{X}^{\hfpadova}\coloneqq \{t\in\mathcal{T}\ |\ \forall x\in\mathcal{X}, \underset{n\in\mathbb{Z}}{\oplus}\mathsf{Hom}_{\mathcal{T}}(x,t[n])\in\smod k\}.
\end{aligned}
\]
The above subcategories are thick and triangulated subcategories of $\mathcal{T}$.

\begin{defn} \label{def:subcategories}
    Let $\mathcal{T}$ be a triangulated category with coproducts. We define
    \begin{itemize}
        \item[(i)] the subcategory of \emph{bounded} objects $\mathcal{T}^{\mathsf{b}}\coloneqq (\mathcal{T}^{\mathsf{c}})^{\padova}$;
        \item[(ii)] the subcategory of \emph{bounded projective}
        objects $\mathcal{T}^{\mathsf{b}}_p\coloneqq {}^{\padova}(\mathcal{T}^{\mathsf{b}})$; 
        \item[(iii)] the subcategory of \emph{bounded injective}
        objects $\mathcal{T}^{\mathsf{b}}_i\coloneqq (\mathcal{T}^{\mathsf{b}})^{\padova}$.
    \end{itemize}
    When $\mathcal{T}$ is $k$-linear, we also consider
    \begin{itemize}
        \item[(iv)] the subcategory of \emph{bounded finite} objects $\mathcal{T}^{\mathsf{b}}_c\coloneqq (\mathcal{T}^{\mathsf{c}})^{\hfpadova}$. 
    \end{itemize}
\end{defn}

The motivation for the above subcategories arises from the derived category of a ring; see \cite[Proposition 3.3]{regular} for a proof and the references therein. 

\begin{prop} \label{derived categoery of a ring}
    Let $R$ be a ring and $\mathcal{T}=\mathsf{D}(R)$ its derived category. Then
    \[
    \mathcal{T}^{\mathsf{b}}=\mathsf{D}^{\mathsf{b}}(R), \ \ \mathcal{T}^{\mathsf{b}}_p=\mathsf{K}^{\mathsf{b}}(\Proj R), \ \ \mathcal{T}^{\mathsf{b}}_i=\mathsf{K}^{\mathsf{b}}(\Inj R)
    \]
    and if, additionally, $R$ is a Noether algebra, then $ \mathcal{T}^{\mathsf{b}}_c=\mathsf{D}^{\mathsf{b}}(\smod R)$.
\end{prop}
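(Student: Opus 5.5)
Since $\mathsf{D}(R)$ is compactly generated by $R$ with $\mathsf{D}(R)^{\mathsf{c}}=\mathsf{thick}(R)$, and $\mathsf{Hom}(R,X[n])=H^n(X)$, I would reduce each intrinsic condition to a test against $R$, or against a single well-chosen module. Each test condition is stable under shifts, cones and summands in the test variable. So testing against $R$ is the same as testing against all of $\mathsf{thick}(R)$: for $\mathcal{T}^{\mathsf{b}}$ this uses uniform vanishing bounds along triangles, and for $\mathcal{T}^{\mathsf{b}}_c$ it uses that $\smod k$ is closed under submodules and extensions, since $k$ is noetherian.

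\textbf{The cases $\mathcal{T}^{\mathsf{b}}$ and $\mathcal{T}^{\mathsf{b}}_c$.} By the reduction, $X\in\mathcal{T}^{\mathsf{b}}$ if and only if $H^n(X)=0$ for $|n|\gg0$, which says $\mathcal{T}^{\mathsf{b}}=\mathsf{D}^{\mathsf{b}}(R)$. In the $k$-linear case, $X\in\mathcal{T}^{\mathsf{b}}_c$ if and only if $\oplus_n H^n(X)$ is a finitely generated $k$-module. Because $R$ is a Noether algebra, this holds if and only if $X$ has bounded cohomology that is finitely generated over $R$. The standard equivalence $\mathsf{D}^{\mathsf{b}}(\smod R)\simeq\mathsf{D}^{\mathsf{b}}_{\smod R}(\Mod R)$ for noetherian $R$ then gives $\mathcal{T}^{\mathsf{b}}_c=\mathsf{D}^{\mathsf{b}}(\smod R)$.

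\textbf{The case $\mathcal{T}^{\mathsf{b}}_p$.}
\begin{itemize}
\item[(1)] \emph{Inclusion $\mathsf{K}^{\mathsf{b}}(\Proj R)\subseteq{}^{\padova}\mathsf{D}^{\mathsf{b}}(R)$.} A bounded complex $P$ of projectives lies in $\mathsf{thick}(\mathsf{Add}\,R)$. We have $\mathsf{Hom}(\oplus R,X[n])=\prod H^n(X)$, which vanishes for $|n|\gg0$ when $X\in\mathsf{D}^{\mathsf{b}}(R)$. The vanishing passes to cones and summands, which gives the inclusion.
\item[(2)] \emph{Boundedness of $X\in{}^{\padova}\mathsf{D}^{\mathsf{b}}(R)$.} Test against the single bounded object $E=\mathsf{Hom}_{\mathbb{Z}}(R,\mathbb{Q}/\mathbb{Z})$. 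Since $\mathsf{Hom}(X,E[n])\cong\mathsf{Hom}_{\mathbb{Z}}(H^{-n}X,\mathbb{Q}/\mathbb{Z})$, this forces $H^n(X)=0$ for $|n|\gg0$, so $X\in\mathsf{D}^{\mathsf{b}}(R)$.
\item[(3)] \emph{Finite projective dimension.} Take a bounded-above projective resolution $P$ of $X$ and let $\Omega_m$ be the syzygies, i.e.\ cokernels of $P^{-m-1}\to P^{-m}$, for $m$ beyond the cohomological range. The truncation gives a class in $\mathsf{Hom}(X,\Omega_m[m])$ that vanishes if and only if $\Omega_m$ is projective, so that $P$ can be truncated there. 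Test $X$ against the single module $M=\oplus_{m\geq m_0}\Omega_m$, which lies in $\mathsf{D}^{\mathsf{b}}(R)$. Then $\mathsf{Hom}(X,M[n])=0$ for $n\gg0$, so the truncation class vanishes for some large $m$. Hence $\Omega_m$ is projective and $X\in\mathsf{K}^{\mathsf{b}}(\Proj R)$.
\end{itemize}

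\textbf{The case $\mathcal{T}^{\mathsf{b}}_i$, and the main obstacle.} This is dual. To see that $X\in(\mathsf{D}^{\mathsf{b}}(R))^{\padova}$ is bounded, use the test object $R$. Then take a bounded-below K-injective resolution and test against $N=\prod_m C^m$, a product of the cosyzygies. Vanishing of $\mathsf{Hom}(N,X[n])$ for large $n$ kills the truncation class $C^m\to X[m]$, so some cosyzygy is injective and $X\in\mathsf{K}^{\mathsf{b}}(\Inj R)$.

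The main obstacle is step (3) and its dual. The bound $n_M$ in the definition of ${}^{\padova}\mathcal{X}$ depends on $M$, so one must package all syzygies into a single bounded test module. One must also check carefully that the truncation class is genuinely nonzero unless the relevant syzygy (respectively cosyzygy) is projective (respectively injective).
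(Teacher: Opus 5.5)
Your proof is correct. The paper gives no argument of its own here and only points to \cite[Proposition 3.3]{regular} and the references there. The standard input for the $\mathcal{T}^{\mathsf{b}}_p$ and $\mathcal{T}^{\mathsf{b}}_i$ parts is the classical characterisation: a complex $X\in\mathsf{D}^-(R)$ lies in $\mathsf{K}^{\mathsf{b}}(\Proj R)$ if and only if $\mathsf{Hom}_{\mathsf{D}(R)}(X,Y[n])=0$ for $n\gg0$ for every $Y\in\mathsf{D}^{\mathsf{b}}(R)$, and dually for injectives. You instead prove everything directly from the definitions:
\begin{itemize}
\item the reduction to the generator $R$ is the paper's Lemma~\ref{perp of thick};
\item the character module $\mathsf{Hom}_{\mathbb{Z}}(R,\mathbb{Q}/\mathbb{Z})$ shows that objects of ${}^{\padova}\mathsf{D}^{\mathsf{b}}(R)$ are bounded, without assuming $X\in\mathsf{D}^-(R)$ in advance;
\item collecting all (co)syzygies into one module handles the dependence of the bound $n_Y$ on $Y$, since that module is a genuine object of $\mathsf{D}^{\mathsf{b}}(R)$ and contains each $\Omega_m$ as a summand.
\end{itemize}
The citation buys brevity; your route makes the statement checkable from Definition~\ref{def:subcategories} alone.

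One claim should be corrected, though it does not affect the proof. It is not true that the truncation class in $\mathsf{Hom}_{\mathsf{D}(R)}(X,\Omega_m[m])$ vanishes if and only if $\Omega_m$ is projective. Compute with the K-projective complex $P$. The group is $\mathrm{End}_R(\Omega_m)$ modulo the endomorphisms factoring through the map $\Omega_m\to P^{-m+1}$ induced by the differential, which is a monomorphism since $H^{-m}(X)=0$. So the class vanishes exactly when $0\to\Omega_m\to P^{-m+1}\to\Omega_{m-1}\to0$ splits, i.e.\ when $\Omega_{m-1}$ is projective. For a counterexample, take $R=\mathbb{Z}$, $X=\mathbb{Z}/2$ and $m=1$: the syzygy $\Omega_1=\mathbb{Z}$ is projective, but the class is the nonzero element of $\mathrm{Ext}^1_{\mathbb{Z}}(\mathbb{Z}/2,\mathbb{Z})$. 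You only use the other direction, and it is valid: vanishing $\Rightarrow$ splitting $\Rightarrow$ $\Omega_m$ projective $\Rightarrow$ $X\simeq(\Omega_m\to P^{-m+1}\to\cdots)\in\mathsf{K}^{\mathsf{b}}(\Proj R)$. The cosyzygy argument for $\mathcal{T}^{\mathsf{b}}_i$ has the same imprecision and the same fix.
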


Recall that a ring $R$ is called \emph{Gorenstein} if projective modules have finite injective dimension and injective modules have finite projective dimension. Further, when $R$ is noetherian, we say that $R$ is \emph{regular} if finitely generated modules have finite projective dimension. Motivated by the derived category of a ring, the following definition was introduced in \cite{regular}.

\begin{defn} \label{notions of regularity}
    A compactly generated triangulated category $\mathcal{T}$:  
    \begin{itemize}
        \item[(i)] has \emph{finite global dimension} if $\mathcal{T}^{\mathsf{b}}=\mathcal{T}^{\mathsf{b}}_p$ (or equivalently $\mathcal{T}^{\mathsf{b}}=\mathcal{T}^{\mathsf{b}}_i$, see \cite[Lemma 4.1]{regular}); 
        \item[(ii)] is \emph{Gorenstein} if $\mathcal{T}^{\mathsf{b}}_p=\mathcal{T}^{\mathsf{b}}_i$;
    \end{itemize}
    and when $\mathcal{T}$ is $k$-linear, then it is 
    \begin{itemize}
        \item[(iii)] \emph{regular} if $\mathcal{T}^{\mathsf{b}}_c=\mathcal{T}^{\mathsf{c}}$. 
    \end{itemize}
\end{defn}

From the discussion above, it is clear that the defined homological concepts behave well with respect to the derived category of a ring. We refer to \cite{regular} for more examples; see in particular \cite[Theorem~4.3]{regular}.

\section{Main Results} \label{main results}

This section deals with the computation of the intrinsic subcategories, introduced previously, for a separable extension of a compactly generated triangulated category (see Theorem \ref{main theorem}), which is the main tool of the paper. In the second part of this section, we prove Theorem \ref{comparing regularity} and Proposition \ref{comparing regularity 2} which are predecessors of Theorem A in the introduction. 

\subsection{Intrinsic subcategories for separable extensions} \label{Intrinsic subcategories for separable extensions} For the proof of our main theorem, we need the following lemmata, which we recall for convenience.

\begin{lem} \textnormal{(\!\!\cite[Lemma 2.9, Lemma 5.2]{regular})} \label{perp of thick}
    Let $\mathcal{T}$ be a triangulated category with coproducts. For any collection of objects $\mathcal{X}$ of $\mathcal{T}$, we have 
    \[
    \mathcal{X}^{?}=\mathsf{thick}(\mathcal{X})^{?} \ \ \ \ \& \ \ \ \ {^?}\mathcal{X}={^?}\mathsf{thick}(\mathcal{X}),
    \]
    for any $?\in\{\padova,\hfpadova\}$.
\end{lem}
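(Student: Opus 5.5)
One inclusion comes for free from the containment $\mathcal{X}\subseteq\mathsf{thick}(\mathcal{X})$. For the reverse inclusion, I would fix $t$ and study the class of objects $x$ that satisfy the defining condition relative to $t$, and show that this class is thick. Because it contains $\mathcal{X}$, it must then contain $\mathsf{thick}(\mathcal{X})$.

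\textbf{The case $?=\padova$.} Take $t\in\mathcal{X}^{\padova}$ and set
\[
\mathcal{S}_t=\{x\in\mathcal{T} \mid \mathsf{Hom}_{\mathcal{T}}(x,t[n])=0 \text{ for } |n|\gg0\}.
\]
We check the closure properties of $\mathcal{S}_t$ in turn.
\begin{itemize}
\item \emph{Shifts.} Shifting $x$ by $[m]$ shifts the vanishing range by $m$. A finite shift of a bounded range is still bounded, so $\mathcal{S}_t$ is closed under shifts.
\item \emph{Summands.} $\mathsf{Hom}$ of a summand is a summand of $\mathsf{Hom}$, so vanishing passes to summands.
\item \emph{Extensions.} Given a triangle $x'\to x\to x''\to x'[1]$ with $x',x''\in\mathcal{S}_t$, the long exact sequence
\[
\mathsf{Hom}(x'',t[n])\to\mathsf{Hom}(x,t[n])\to\mathsf{Hom}(x',t[n])
\]
shows that the middle term vanishes once $|n|$ exceeds $\max(n_{x'},n_{x''})$.
\end{itemize}
Hence $\mathcal{S}_t$ is thick and contains $\mathcal{X}$, so $t\in\mathsf{thick}(\mathcal{X})^{\padova}$. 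The left version ${}^{\padova}\mathcal{X}$ is symmetric: use $\mathsf{Hom}(t,x[n])$ and the covariant long exact sequence.

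\textbf{The case $?=\hfpadova$.} The same template applies with
\[
\mathcal{S}_t=\{x \mid \textstyle\bigoplus_n\mathsf{Hom}(x,t[n])\in\smod k\}.
\]
\begin{itemize}
\item \emph{Shifts.} A shift only reindexes the direct sum, so it does not change the module.
\item \emph{Summands.} The sum for a summand $x$ is a direct summand of the sum for the larger object. Since $k$ is noetherian, a summand of a finitely generated module is finitely generated.
\item \emph{Extensions.} Taking the direct sum over $n$ of the long exact sequences gives an exact sequence of graded $k$-modules
\[
H(x'')\to H(x)\to H(x'), \qquad H(y)=\textstyle\bigoplus_n\mathsf{Hom}(y,t[n]).
\]
So $H(x)$ is an extension of a submodule of $H(x')$ by a quotient of $H(x'')$. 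Over noetherian $k$ both are finitely generated, hence so is $H(x)$.
\end{itemize}
The left version is analogous.

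The only step needing any care is the extension closure in the $\hfpadova$ case, where the noetherian hypothesis on $k$ is what makes it work. This step is handled in the citation \cite[Lemma 5.2]{regular}, and there is no genuine obstacle.
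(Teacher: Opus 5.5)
Your proposal is correct. The paper gives no proof of its own and only cites \cite[Lemma 2.9, Lemma 5.2]{regular}; your argument (fix $t$, show that the class of objects satisfying the defining condition against $t$ is thick, hence contains $\mathsf{thick}(\mathcal{X})$) is the standard proof of this kind of statement. One minor point: noetherianity of $k$ is only needed in the extension step, for the submodule of $H(x')$, because a direct summand of a finitely generated module is a quotient of it and is therefore finitely generated over any ring.
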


\begin{lem}\textnormal{(\!\!\cite[Lemma 2.11]{regular})} \label{perp under adjunction}
    Let $\mathsf{F}$ be an exact functor $\mathsf{F}\colon\mathcal{D}\rightarrow \mathcal{T}$ of triangulated categories and let $\mathcal{X}$ and $\mathcal{Y}$ be classes of objects in $\mathcal{D}$ and $\mathcal{T}$ respectively, such that $\mathsf{F}(\mathcal{X})\subseteq \mathcal{Y}$. Then, 
    \begin{itemize}
        \item[\textnormal{(i)}]if $\mathsf{F}$ admits a right adjoint $\mathsf{G}$, then $\mathsf{G}(\mathcal{Y}^{?})\subseteq \mathcal{X}^{?}$;
        \item[\textnormal{(ii)}] if $\mathsf{F}$ admits a left adjoint $\mathsf{G}$, then $\mathsf{G}(^{?}\mathcal{Y})\subseteq {^?}\mathcal{X}$,
    \end{itemize}
    for any $?\in\{\padova,\hfpadova\}$.
\end{lem}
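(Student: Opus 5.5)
The idea is to reduce the claim about $\mathsf{G}$ to a statement about $\mathsf{F}$ through the adjunction isomorphisms, which commute with suspension since all functors involved are exact. I treat (i) in detail; (ii) is dual.

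\textbf{Part (i), case $?=\padova$.} Let $y\in\mathcal{Y}^{\padova}$ and $x\in\mathcal{X}$. Exactness of $\mathsf{F}$ and $\mathsf{G}$, together with naturality of the adjunction, gives for every $n\in\mathbb{Z}$ an isomorphism
\[
\mathsf{Hom}_{\mathcal{D}}(x,\mathsf{G}(y)[n])\cong\mathsf{Hom}_{\mathcal{D}}(x,\mathsf{G}(y[n]))\cong\mathsf{Hom}_{\mathcal{T}}(\mathsf{F}(x),y[n]).
\]
Since $\mathsf{F}(x)\in\mathcal{Y}$ and $y\in\mathcal{Y}^{\padova}$, there is an integer $n_{\mathsf{F}(x)}$ such that the right-hand side vanishes for $|n|>n_{\mathsf{F}(x)}$. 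Taking $n_x:=n_{\mathsf{F}(x)}$ shows $\mathsf{G}(y)\in\mathcal{X}^{\padova}$.

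\textbf{Part (i), case $?=\hfpadova$.} Here $\mathcal{D}$ and $\mathcal{T}$ are $k$-linear and $\mathsf{F}$ is $k$-linear, so the adjunction isomorphisms are $k$-linear. Summing the displayed isomorphisms over $n$ gives
\[
\bigoplus_{n}\mathsf{Hom}(x,\mathsf{G}(y)[n])\cong\bigoplus_n\mathsf{Hom}(\mathsf{F}(x),y[n]),
\]
an isomorphism of $k$-modules. The right-hand side lies in $\smod k$ by assumption, hence so does the left.

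\textbf{Part (ii).} This is dual. For a left adjoint $\mathsf{G}$ of $\mathsf{F}$ and $y\in{}^{?}\mathcal{Y}$, use
\[
\mathsf{Hom}_{\mathcal{D}}(\mathsf{G}(y),x[n])\cong\mathsf{Hom}_{\mathcal{T}}(y,\mathsf{F}(x)[n])
\]
and argue exactly as in (i). For $\hfpadova$ in (ii), one uses the left-sided version ${}^{\hfpadova}\mathcal{X}$, defined analogously to ${}^{\padova}\mathcal{X}$, and the same summation argument applies.

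\textbf{Main obstacle.} The only real point to check is the compatibility of shifts with the adjunction. An adjoint of an exact functor is exact, with a canonical isomorphism $\mathsf{G}\circ[1]\cong[1]\circ\mathsf{G}$ compatible with the adjunction. This is standard and is what makes the isomorphisms above natural in $n$. Everything else is bookkeeping.
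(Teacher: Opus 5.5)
Your argument is correct. It is the standard adjunction argument behind \cite[Lemma 2.11]{regular}, which the paper quotes without proof. You are right to make the $k$-linearity of $\mathsf{F}$ explicit for $?=\hfpadova$: without it the adjunction bijection need not respect the $k$-module structures, and finite generation can fail to transfer.

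The shift compatibility you flag as the main obstacle is not really needed. Writing $\mathsf{Hom}_{\mathcal{D}}(x,\mathsf{G}(y)[n])\cong\mathsf{Hom}_{\mathcal{D}}(x[-n],\mathsf{G}(y))\cong\mathsf{Hom}_{\mathcal{T}}(\mathsf{F}(x)[-n],y)$ uses only that $\mathsf{F}$ commutes with the suspension.
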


\pagebreak

\begin{thm} \label{main theorem}
    Let $\mathcal{T}$ be a compactly generated triangulated category with coproducts and $\mathsf{M}$ a separable exact smashing monad on $\mathcal{T}$. The following hold for $\mathcal{D}\coloneqq \mathsf{M}\lMod_{\mathcal{T}}$. 
    \begin{itemize}
        \item[\textnormal{(i)}] An object $(x,u)$ of $\mathcal{D}$ belongs in $\mathcal{D}^{\mathsf{b}}$ if and only if $x\in\mathcal{T}^{\mathsf{b}}$. In particular, if $\mathsf{M}(\mathcal{T}^{\mathsf{b}})\subseteq \mathcal{T}^{\mathsf{b}}$, then $(\mathsf{M}\lMod_{\mathcal{T}})^{\mathsf{b}}=\mathsf{M}\lMod_{\mathcal{T}^{\mathsf{b}}}$ as subcategories of $\mathcal{D}$. 
        \item[\textnormal{(ii)}] Given an object $(x,u)$ of $\mathcal{D}$ such that $x$ belongs in $\mathcal{T}^{\mathsf{b}}_i$, then $(x,u)$ belongs in $\mathcal{D}^{\mathsf{b}}_i$. If moreover $\mathsf{M}(\mathcal{T}^{\mathsf{b}})\subseteq \mathcal{T}^{\mathsf{b}}$, then $(x,u)$ belongs in $\mathcal{D}^{\mathsf{b}}_i$ if and only if $x$ belongs in $\mathcal{T}^{\mathsf{b}}_i$. If additionally to all the above, the inclusion $\mathsf{M}(\mathcal{T}^{\mathsf{b}}_i)\subseteq \mathcal{T}^{\mathsf{b}}_i$ holds, then $(\mathsf{M}\lMod_{\mathcal{T}})^{\mathsf{b}}_i=\mathsf{M}\lMod_{\mathcal{T}^{\mathsf{b}}_i}$ as subcategories of $\mathcal{D}$.

        \item[\textnormal{(iii)}] Given an object $(x,u)$ of $\mathcal{D}$ such that $x$ belongs in $\mathcal{T}^{\mathsf{b}}_p$, then $(x,u)$ belongs in $\mathcal{D}^{\mathsf{b}}_p$. In particular, if $\mathsf{U}_{\mathsf{M}}(\mathcal{D}^{\mathsf{b}}_p)\subseteq \mathcal{T}^{\mathsf{b}}_p$, then $(x,u)$ belongs in $\mathcal{D}^{\mathsf{b}}_p$ if and only if $x$ belongs in $\mathcal{T}^{\mathsf{b}}_p$. Moreover, in this case, $(\mathsf{M}\lMod_{\mathcal{T}})^{\mathsf{b}}_p=\mathsf{M}\lMod_{\mathcal{T}^{\mathsf{b}}_p}$ as subcategories of $\mathcal{D}$. 
        \item[\textnormal{(iv)}] Assume that $\mathcal{T}$ is $k$-linear \textnormal{(}which also implies that $\mathcal{D}$ is $k$-linear\textnormal{)}. Then, an object $(x,u)$ of $\mathcal{D}$ belongs in $\mathcal{D}^{\mathsf{b}}_c$ if and only if $x$ belongs in $\mathcal{T}^{\mathsf{b}}_c$. In particular, if the inclusion $\mathsf{M}({\mathcal{T}^{\mathsf{b}}_c})\subseteq \mathcal{T}^{\mathsf{b}}_c$ holds, then $(\mathsf{M}\lMod_{\mathcal{T}})^{\mathsf{b}}_c=\mathsf{M}\lMod_{\mathcal{T}^{\mathsf{b}}_c}$ as subcategories of $\mathcal{D}$. 
    \end{itemize}
\end{thm}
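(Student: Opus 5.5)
The whole proof runs through the adjunction $(\mathsf{T}_{\mathsf{M}},\mathsf{U}_{\mathsf{M}})$. Because $\mathsf{M}$ is smashing and $\mathsf{U}_{\mathsf{M}}$ is exact, $\mathsf{U}_{\mathsf{M}}$ preserves coproducts. Hence its left adjoint $\mathsf{T}_{\mathsf{M}}$ preserves compacts, so $\mathsf{T}_{\mathsf{M}}(\mathcal{T}^{\mathsf{c}})\subseteq\mathcal{D}^{\mathsf{c}}$. Theorem~\ref{balmer2} gives $\mathcal{D}^{\mathsf{c}}=\mathsf{thick}(\mathsf{T}_{\mathsf{M}}(\mathcal{T}^{\mathsf{c}}))$, and Lemma~\ref{summand} says every $(x,u)$ is a summand of $\mathsf{T}_{\mathsf{M}}(x)$. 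These three facts are the main inputs.

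\emph{Parts (i) and (iv).} By Lemma~\ref{perp of thick},
\[
\mathcal{D}^{\mathsf{b}}=(\mathsf{T}_{\mathsf{M}}(\mathcal{T}^{\mathsf{c}}))^{\padova},\qquad \mathcal{D}^{\mathsf{b}}_c=(\mathsf{T}_{\mathsf{M}}(\mathcal{T}^{\mathsf{c}}))^{\hfpadova}.
\]
By adjunction, $\mathsf{Hom}_{\mathcal{D}}(\mathsf{T}_{\mathsf{M}}c,(x,u)[n])\cong\mathsf{Hom}_{\mathcal{T}}(c,x[n])$, and this isomorphism is $k$-linear in the $k$-linear case. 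So $(x,u)$ lies in $\mathcal{D}^{\mathsf{b}}$ (resp.\ $\mathcal{D}^{\mathsf{b}}_c$) if and only if $x$ lies in $\mathcal{T}^{\mathsf{b}}$ (resp.\ $\mathcal{T}^{\mathsf{b}}_c$). For the "in particular" statements, note that if $\mathsf{M}$ preserves $\mathcal{T}^{\mathsf{b}}$ then the adjunction restricts to $\mathcal{D}^{\mathsf{b}}$ and $\mathcal{T}^{\mathsf{b}}$. Indeed, $\mathsf{U}_{\mathsf{M}}$ maps $\mathcal{D}^{\mathsf{b}}$ into $\mathcal{T}^{\mathsf{b}}$ by what we just proved, and $\mathsf{U}_{\mathsf{M}}\mathsf{T}_{\mathsf{M}}=\mathsf{M}$ shows that $\mathsf{T}_{\mathsf{M}}(\mathcal{T}^{\mathsf{b}})\subseteq\mathcal{D}^{\mathsf{b}}$. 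Since $\mathcal{D}^{\mathsf{b}}$ is thick, Lemma~\ref{underdog} gives $\mathcal{D}^{\mathsf{b}}=\mathsf{M}\lMod_{\mathcal{T}^{\mathsf{b}}}$. Alternatively one can read this off directly from the objectwise description. Part (iv) is identical with $\hfpadova$ in place of $\padova$.

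\emph{Part (ii).} Part (i) gives $\mathsf{U}_{\mathsf{M}}(\mathcal{D}^{\mathsf{b}})\subseteq\mathcal{T}^{\mathsf{b}}$. Since $\mathsf{U}_{\mathsf{M}}$ is exact with right adjoint $\mathsf{T}_{\mathsf{M}}$ (this follows from separability plus exactness, as $\mathsf{M}$ is self-adjoint-like, but more safely one can argue directly), we can apply Lemma~\ref{perp under adjunction}. Concretely, for $d\in\mathcal{D}^{\mathsf{b}}$ we want $\mathsf{Hom}_{\mathcal{D}}(d,(x,u)[n])=0$ for $|n|\gg0$. Separability of $\mathsf{U}_{\mathsf{M}}$ makes $\mathsf{Hom}_{\mathcal{D}}(d,(x,u)[n])$ a natural retract of $\mathsf{Hom}_{\mathcal{T}}(\mathsf{U}_{\mathsf{M}}d,x[n])$. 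The latter vanishes for $|n|\gg0$ because $\mathsf{U}_{\mathsf{M}}d\in\mathcal{T}^{\mathsf{b}}$ and $x\in\mathcal{T}^{\mathsf{b}}_i$. This avoids needing a right adjoint at all, and it is the step I would check most carefully: that the retraction is compatible with shifts is fine, because only vanishing is needed. For the converse, assume $\mathsf{M}(\mathcal{T}^{\mathsf{b}})\subseteq\mathcal{T}^{\mathsf{b}}$, so that $\mathsf{T}_{\mathsf{M}}(\mathcal{T}^{\mathsf{b}})\subseteq\mathcal{D}^{\mathsf{b}}$ by (i). If $(x,u)\in\mathcal{D}^{\mathsf{b}}_i$ and $t\in\mathcal{T}^{\mathsf{b}}$, then
\[
\mathsf{Hom}_{\mathcal{T}}(t,x[n])\cong\mathsf{Hom}_{\mathcal{D}}(\mathsf{T}_{\mathsf{M}}t,(x,u)[n]),
\]
which vanishes for $|n|\gg0$; hence $x\in\mathcal{T}^{\mathsf{b}}_i$. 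The final equality follows again from Lemma~\ref{underdog}, or directly, using $\mathsf{M}(\mathcal{T}^{\mathsf{b}}_i)\subseteq\mathcal{T}^{\mathsf{b}}_i$ so that the adjunction restricts.

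\emph{Part (iii).} Let $x\in\mathcal{T}^{\mathsf{b}}_p$ and $d\in\mathcal{D}^{\mathsf{b}}$. By adjunction,
\[
\mathsf{Hom}_{\mathcal{D}}(\mathsf{T}_{\mathsf{M}}x,d[n])\cong\mathsf{Hom}_{\mathcal{T}}(x,\mathsf{U}_{\mathsf{M}}d[n]),
\]
which vanishes for $|n|\gg0$ since $\mathsf{U}_{\mathsf{M}}d\in\mathcal{T}^{\mathsf{b}}$ by (i). Thus $\mathsf{T}_{\mathsf{M}}x\in\mathcal{D}^{\mathsf{b}}_p$, and $(x,u)$, being a summand of $\mathsf{T}_{\mathsf{M}}x$, lies in the thick subcategory $\mathcal{D}^{\mathsf{b}}_p$. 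The converse is exactly the hypothesis $\mathsf{U}_{\mathsf{M}}(\mathcal{D}^{\mathsf{b}}_p)\subseteq\mathcal{T}^{\mathsf{b}}_p$. The equality of subcategories is then immediate from the objectwise characterization. Note that it requires $\mathsf{M}$ to preserve $\mathcal{T}^{\mathsf{b}}_p$ so that $\mathsf{M}\lMod_{\mathcal{T}^{\mathsf{b}}_p}$ makes sense; this holds because $\mathsf{M}x=\mathsf{U}_{\mathsf{M}}\mathsf{T}_{\mathsf{M}}x$ and $\mathsf{T}_{\mathsf{M}}x\in\mathcal{D}^{\mathsf{b}}_p$.
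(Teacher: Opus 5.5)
Your proof is correct and follows the paper's argument essentially step for step: (i) and (iv) via $\mathcal{D}^{\mathsf{c}}=\mathsf{thick}(\mathsf{T}_{\mathsf{M}}(\mathcal{T}^{\mathsf{c}}))$ plus adjunction; (ii) via injectivity of $\mathsf{U}_{\mathsf{M}}$ on Hom sets for the forward direction and adjunction for the converse; (iii) via $\mathsf{M}=\mathsf{U}_{\mathsf{M}}\mathsf{T}_{\mathsf{M}}$ to get $\mathsf{M}(\mathcal{T}^{\mathsf{b}}_p)\subseteq\mathcal{T}^{\mathsf{b}}_p$. In (ii) the paper uses plain faithfulness of $\mathsf{U}_{\mathsf{M}}$ rather than the separability retraction, and your use of Lemma~\ref{summand} for the first claim of (iii) is only a cosmetic variation. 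Do drop the parenthetical in (ii) asserting that $\mathsf{T}_{\mathsf{M}}$ is a right adjoint of $\mathsf{U}_{\mathsf{M}}$: this is false in general, since the right adjoint is the functor $\mathsf{R}_{\mathsf{M}}$ of Remark~\ref{comonad}, which differs from $\mathsf{T}_{\mathsf{M}}$ outside special (e.g.\ ambidextrous, equivariant) situations. Your argument never actually uses it.
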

\begin{proof}
    (i) We know from Theorem \ref{balmer2} that $\mathcal{D}^{\mathsf{c}}=\mathsf{thick}(\mathsf{T}_{\mathsf{M}}(\mathcal{T}^{\mathsf{c}}))$, and therefore, by Lemma \ref{perp of thick}(i), we can deduce the equality $\mathcal{D}^{\mathsf{b}}=(\mathsf{T}_{\mathsf{M}}(\mathcal{T}^{\mathsf{c}}))^{\padova}$. Therefore, using the adjunction $(\mathsf{T}_{\mathsf{M}},\mathsf{U}_{\mathsf{M}})$, we see that $\mathcal{D}^{\mathsf{b}}$ consists of the objects $(x,u)$ of $\mathcal{D}$ such that $x$ belongs in $(\mathcal{T}^{\mathsf{c}})^{\padova}=\mathcal{T}^{\mathsf{b}}$. The last claim follows from the above.

    (ii) A morphism $ (y,v)\rightarrow (x,u)[n]$ where $(y,v)$ belongs in $\mathcal{D}^{\mathsf{b}}$ is given, in particular, by a morphism $y \rightarrow x[n]$ in $\mathcal{T}$, where $y\in\mathcal{T}^{\mathsf{b}}$ from (i). Therefore if $x\in\mathcal{T}^{\mathsf{b}}_i$, then the morphism $y \to x[n]$ is trivial for large enough $|n|$, which implies that $(y,v) \to (x,u)[n]$ is also trivial, for the same values of $n$, since $\mathsf{U_M}$ is faithful, and therefore  $(x,u)$ belongs in $\mathcal{D}^{\mathsf{b}}_i$. When $\mathsf{M}(\mathcal{T}^{\mathsf{b}})\subseteq \mathcal{T}^{\mathsf{b}}$, then using (i), it follows that the functor $\mathsf{T}_{\mathsf{M}}$ maps $\mathcal{T}^{\mathsf{b}}$ to $\mathcal{D}^{\mathsf{b}}$, and therefore given an object $(x,u)$ in $\mathcal{D}^{\mathsf{b}}_i$, it follows by Lemma~\ref{perp under adjunction} that $(x,u)$ belongs in $(\mathsf{T}_{\mathsf{M}}(\mathcal{T}^{\mathsf{b}}))^{\padova}$. Using the adjunction $(\mathsf{T}_{\mathsf{M}},\mathsf{U}_{\mathsf{M}})$ we see that $x\in\mathcal{T}^{\mathsf{b}}_i$. The last claim is an immediate consequence of the above. 
    
    (iii) The first claim is similar to the analogous claim in (ii) and the second is evident. Regarding the last claim, we observe that since $\mathsf{T}_{\mathsf{M}}$ preserves compact objects, it follows by Lemma~\ref{perp under adjunction}(i) that $\mathsf{U}_{\mathsf{M}}$ preserves bounded objects, i.e.\ $\mathsf{U}_{\mathsf{M}}(\mathcal{D}^{\mathsf{b}})\subseteq \mathcal{T}^{\mathsf{b}}$. As a consequence, by Lemma \ref{perp under adjunction}(ii), it follows that $\mathsf{T}_{\mathsf{M}}(\mathcal{T}^{\mathsf{b}}_p)\subseteq \mathcal{D}^{\mathsf{b}}_p$ and therefore $\mathsf{M}=\mathsf{U}_{\mathsf{M}}\circ \mathsf{T}_{\mathsf{M}}$ satisfies $\mathsf{M}(\mathcal{T}^{\mathsf{b}}_p)\subseteq \mathcal{T}^{\mathsf{b}}_p$.
    
    (iv) is similar to (i), using Lemma \ref{perp under adjunction} for $?=\hfpadova$.
\end{proof}

\begin{rem} \label{comonad}
    Assume now the setup of Theorem \ref{balmer2}. Since $\mathsf{M}$ is assumed to be smashing, the functor $\mathsf{U}_{\mathsf{M}}$ preserves coproducts and therefore, it follows from \cite[Theorem 4.1 and Theorem 5.1]{neeman} that it admits a right adjoint functor, which we denote by $\mathsf{R}_{\mathsf{M}}$. The situation is summarized below, where $\mathcal{D}\coloneqq \mathsf{M}\lMod_{\mathcal{T}}$.
\[
\begin{tikzcd}
\mathcal{D} \arrow[rr, "\mathsf{U}_{\mathsf{M}}"]                     &  & \mathcal{T} \arrow[ll, "\mathsf{T}_{\mathsf{M}}"', bend right] \arrow[ll, "\mathsf{R}_{\mathsf{M}}", bend left] \\
                                                                      &  &                                                                                                                 \\
\mathcal{D}^{\mathsf{c}} \arrow[uu, "\mathrm{S}_{\mathcal{D}}", hook] &  & \mathcal{T}^{\mathsf{c}} \arrow[uu, "\mathrm{S}_{\mathcal{T}}"', hook] \arrow[ll, "\mathsf{T}_{\mathsf{M}}"]   
\end{tikzcd}
\]
The composite $\mathsf{U}_{\mathsf{M}}\circ \mathsf{R}_{\mathsf{M}}$ defines a \emph{comonad} \cite{maclane categories} on $\mathcal{T}$, which we denote by $\mathsf{N}$. We then observe that if $\mathsf{N}(\mathcal{T}^{\mathsf{b}})\subseteq \mathcal{T}^{\mathsf{b}}$, then $\mathsf{U}_{\mathsf{M}}(\mathcal{D}^{\mathsf{b}}_p)\subseteq \mathcal{D}^{\mathsf{b}}_p$. Indeed, we first see that $\mathsf{R}_{\mathsf{M}}(\mathcal{T}^{\mathsf{b}})\subseteq \mathcal{D}^{\mathsf{b}}$, since $\mathsf{R}_{\mathsf{M}}(x)$ belongs in $\mathcal{D}^{\mathsf{b}}$ if and only if $\mathsf{U}_{\mathsf{M}}\mathsf{R}_{\mathsf{M}}(x)$ belongs in $\mathcal{T}^{\mathsf{b}}$ by Theorem \ref{main theorem}(i). Therefore it follows from Lemma \ref{perp under adjunction}(ii) that $\mathsf{U}_{\mathsf{M}}$ maps $\mathcal{D}^{\mathsf{b}}_p$ to $\mathcal{T}^{\mathsf{b}}_p$. That is to say that, although the second statement of Theorem \ref{main theorem} is not dual to the third, there is a dual statement when one passes to the category of comodules over $\mathsf{N}$. 
\end{rem}

Recall from \cite{regular} that for a triangulated category $\mathcal{T}$ and for any collection of objects, say $\mathcal{X}$, we denote by $\mathsf{broad}_{\Sigma}(\mathcal{X})$ the smallest thick triangulated subcategory of $\mathcal{T}$ that contains $\mathcal{X}$ and is closed under self-coproducts. The latter exists by \cite[Lemma 2.6]{regular} and coincides with $\mathsf{thick}(\cup_{x\in\mathcal{X}}\Add(x))$. For a noetherian ring $R$, we consider the following condition that may be satisfied 
\begin{equation} \label{paraskhnio2}
    \mathsf{D}^{\mathsf{b}}(\Mod R)=\mathsf{broad}_{\Sigma}(\mathsf{D}^{\mathsf{b}}(\smod R)). 
\end{equation}
The motivation for the above is that it appeared as a necessary tool for the computation of the intrinsic subcategories (of Definition \ref{def:subcategories}) in the case of $\mathsf{K}_{\mathsf{ac}}(\Inj R)$ and $\mathsf{K}(\Inj R)$, see \cite[Subsection 3.4]{regular}. This is satisfied for big classes of rings; for instance it holds for all Artin algebras. We will later investigate how this condition behaves under separable extensions (see Corollary \ref{perissos}), a topic of independent interest, for which we need the following. 

\begin{lem} \label{paraskhnio}
    Let $\mathcal{T}$ be a triangulated category with coproducts, $\mathsf{M}$ a separable exact smashing monad on $\mathcal{T}$ and $\mathcal{X}$ a subcategory of $\mathcal{T}$ satisfying the inclusion $\mathsf{M}(\mathcal{X})\subseteq \mathcal{X}$. Then $\mathsf{broad}_{\Sigma}(\mathsf{M}\lMod_{\mathcal{X}})=\mathsf{M}\lMod_{\mathsf{broad}_{\Sigma}(\mathcal{X})}$ as subcategories of $\mathsf{M}\lMod_{\mathcal{T}}$. 
\end{lem}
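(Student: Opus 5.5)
We prove the two inclusions separately, writing $\mathcal{B}\coloneqq\mathsf{broad}_{\Sigma}(\mathcal{X})$ and $\mathcal{D}\coloneqq\mathsf{M}\lMod_{\mathcal{T}}$.

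\emph{Preliminary step: $\mathsf{M}$ restricts to $\mathcal{B}$.} Consider the full subcategory $\mathcal{S}$ of objects $t\in\mathcal{T}$ with $\mathsf{M}(t)\in\mathcal{B}$. Since $\mathsf{M}$ is exact, additive and commutes with coproducts (it is smashing), $\mathcal{S}$ has the following properties:
\begin{itemize}
\item it is a triangulated subcategory;
\item it is closed under summands;
\item it is closed under self-coproducts, because $\mathsf{M}(t^{(I)})\cong\mathsf{M}(t)^{(I)}$.
\end{itemize}
Moreover $\mathcal{X}\subseteq\mathcal{S}$ by hypothesis, so $\mathcal{B}\subseteq\mathcal{S}$ by minimality of $\mathcal{B}$, that is, $\mathsf{M}(\mathcal{B})\subseteq\mathcal{B}$. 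Hence $\mathsf{M}\lMod_{\mathcal{B}}$ makes sense as the full subcategory of $\mathcal{D}$ consisting of the objects $(x,u)$ with $x\in\mathcal{B}$.

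\emph{Inclusion $\subseteq$.} It suffices to show that $\mathsf{M}\lMod_{\mathcal{B}}$ is thick, closed under self-coproducts, and contains $\mathsf{M}\lMod_{\mathcal{X}}$; minimality of $\mathsf{broad}_{\Sigma}$ then gives the inclusion. Containment is clear. For the closure properties we use that $\mathsf{U}_{\mathsf{M}}$ is exact (Lemma \ref{pretriangulated without idempotent}), preserves summands, and preserves coproducts since $\mathsf{M}$ is smashing. The coproduct of $\mathsf{M}$-modules is computed underlying, so $\mathsf{U}_{\mathsf{M}}$ sends self-coproducts in $\mathcal{D}$ to self-coproducts in $\mathcal{T}$. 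Therefore the preimage $\mathsf{U}_{\mathsf{M}}^{-1}(\mathcal{B})$, which equals $\mathsf{M}\lMod_{\mathcal{B}}$, inherits all three properties from $\mathcal{B}$.

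\emph{Inclusion $\supseteq$.} Let $(x,u)\in\mathcal{D}$ with $x\in\mathcal{B}$. By Lemma \ref{summand}, $(x,u)$ is a summand of $\mathsf{T}_{\mathsf{M}}(x)$, so it suffices to show that $\mathsf{T}_{\mathsf{M}}(\mathcal{B})\subseteq\mathsf{broad}_{\Sigma}(\mathsf{M}\lMod_{\mathcal{X}})$. Consider the class of $t\in\mathcal{T}$ with $\mathsf{T}_{\mathsf{M}}(t)\in\mathsf{broad}_{\Sigma}(\mathsf{M}\lMod_{\mathcal{X}})$. It is thick because $\mathsf{T}_{\mathsf{M}}$ is exact. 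It is closed under self-coproducts because $\mathsf{T}_{\mathsf{M}}$ is a left adjoint and so preserves coproducts. It contains $\mathcal{X}$, because for $t\in\mathcal{X}$ we have $\mathsf{T}_{\mathsf{M}}(t)=(\mathsf{M}(t),\mu_t)$ with $\mathsf{M}(t)\in\mathcal{X}$, so $\mathsf{T}_{\mathsf{M}}(t)\in\mathsf{M}\lMod_{\mathcal{X}}$. By minimality, this class contains $\mathcal{B}$, which is what we need.

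The only delicate point is checking that coproducts in $\mathcal{D}$ exist and are created by $\mathsf{U}_{\mathsf{M}}$. Given modules $(x_i,u_i)$, the structure map on $\oplus x_i$ is $\oplus u_i$ precomposed with the canonical isomorphism $\mathsf{M}(\oplus x_i)\cong\oplus\mathsf{M}(x_i)$, which exists because $\mathsf{M}$ is smashing. One must verify that this object satisfies the universal property of the coproduct in $\mathcal{D}$ and that the resulting triangulated structure is compatible with it. Beyond this, the argument is formal.
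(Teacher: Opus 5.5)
Your proof is correct and follows essentially the paper's route: the paper uses Lemma \ref{underdog} (which is exactly your summand argument via Lemma \ref{summand}) to reduce to showing that $\mathsf{U}_{\mathsf{M}}$ maps $\mathsf{broad}_{\Sigma}(\mathsf{M}\lMod_{\mathcal{X}})$ into $\mathsf{broad}_{\Sigma}(\mathcal{X})$ and $\mathsf{T}_{\mathsf{M}}$ maps $\mathsf{broad}_{\Sigma}(\mathcal{X})$ into $\mathsf{broad}_{\Sigma}(\mathsf{M}\lMod_{\mathcal{X}})$, and these are precisely your two inclusions. The only difference is that the paper checks these via the description $\mathsf{broad}_{\Sigma}(\mathcal{X})=\mathsf{thick}(\cup_{x\in\mathcal{X}}\Add(x))$, whereas you use preimage and minimality arguments, which is equally valid; you also make explicit that $\mathsf{M}$ preserves $\mathsf{broad}_{\Sigma}(\mathcal{X})$, which the paper leaves implicit.
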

\begin{proof}
    In order to prove this, since $\mathsf{broad}_{\Sigma}(\mathsf{M}\lMod_{\mathcal{X}})$ is a thick subcategory of $\mathsf{M}\lMod_{\mathcal{T}}$, it is enough by Lemma \ref{underdog} to show that the adjunction $(\mathsf{T}_{\mathsf{M}},\mathsf{U}_{\mathsf{M}})$ restricts as below. 
    \[\begin{tikzcd}
	{\mathsf{broad}_{\Sigma}(\mathsf{M}\lMod_{\mathcal{X}})} && {\mathsf{broad}_{\Sigma}(\mathcal{X})}
	\arrow["{\mathsf{U}_{\mathsf{M}}}"', shift right=2, from=1-1, to=1-3]
	\arrow["{\mathsf{T}_{\mathsf{M}}}"', shift right=2, from=1-3, to=1-1]
\end{tikzcd}\]
    It is clear that for any exact functor $\mathsf{F}\colon \mathcal{C}\rightarrow \mathcal{C}'$ between any triangulated categories and for any subcategory $\mathcal{X}$ of objects of $\mathcal{C}$, we have $\mathsf{F}(\mathsf{thick}(\mathcal{X}))\subseteq \mathsf{thick}(\mathsf{F}(\mathcal{X}))$. Using this, the description of the ``broad closure'' and the fact that 
    \[
    \mathsf{T}_{\mathsf{M}}(\cup_{x\in\mathcal{X}}\Add(x))\subseteq \cup_{y\in\mathsf{M}\lMod_{\mathcal{X}}}\Add(y) \ \ \& \ \ \mathsf{U}_{\mathsf{M}}(\cup_{y\in\mathsf{M}\lMod_{\mathcal{X}}}\Add(y))\subseteq \cup_{x\in\mathcal{X}}\Add(x)
    \]
    (for the second inclusion use the fact that $\mathsf{U}_{\mathsf{M}}$ commutes with coproducts - and thus in particular with self-coproducts), the result follows. 
\end{proof}

\begin{prop} \label{broad paraskhnio}
    Let $\mathcal{T}$ be a $k$-linear compactly generated triangulated category and $\mathsf{M}$ a separable exact smashing monad on $\mathcal{T}$. If $\mathcal{T}^{\mathsf{b}}=\mathsf{broad}_{\Sigma}(\mathcal{T}^{\mathsf{b}}_c)$ and $\mathsf{M}(\mathcal{T}^{\mathsf{b}}_c)\subseteq \mathcal{T}^{\mathsf{b}}_c$, then 
    \[
    (\mathsf{M}\lMod{_{\mathcal{T}}})^{\mathsf{b}}=\mathsf{broad}_{\Sigma}((\mathsf{M}\lMod{_{\mathcal{T}}})^{\mathsf{b}}_{c})
    \]
as subcategories of $\mathsf{M}\lMod_{\mathcal{T}}$.
\end{prop}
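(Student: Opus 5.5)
Write $\mathcal{D}\coloneqq\mathsf{M}\lMod_{\mathcal{T}}$. The plan is to express both sides of the claimed equality as module categories over suitable subcategories of $\mathcal{T}$, and then compare them using Lemma \ref{paraskhnio}. The tools are Theorem \ref{main theorem}(i) and (iv) together with Lemma \ref{paraskhnio}.

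First, since $\mathsf{M}(\mathcal{T}^{\mathsf{b}}_c)\subseteq\mathcal{T}^{\mathsf{b}}_c$, Theorem \ref{main theorem}(iv) gives $\mathcal{D}^{\mathsf{b}}_c=\mathsf{M}\lMod_{\mathcal{T}^{\mathsf{b}}_c}$ as subcategories of $\mathcal{D}$. Applying Lemma \ref{paraskhnio} with $\mathcal{X}=\mathcal{T}^{\mathsf{b}}_c$, whose hypothesis $\mathsf{M}(\mathcal{X})\subseteq\mathcal{X}$ is exactly our assumption, yields
\[
\mathsf{broad}_{\Sigma}(\mathcal{D}^{\mathsf{b}}_c)=\mathsf{broad}_{\Sigma}(\mathsf{M}\lMod_{\mathcal{T}^{\mathsf{b}}_c})=\mathsf{M}\lMod_{\mathsf{broad}_{\Sigma}(\mathcal{T}^{\mathsf{b}}_c)}=\mathsf{M}\lMod_{\mathcal{T}^{\mathsf{b}}},
\]
where the last step uses the hypothesis $\mathcal{T}^{\mathsf{b}}=\mathsf{broad}_{\Sigma}(\mathcal{T}^{\mathsf{b}}_c)$. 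Here $\mathsf{M}\lMod_{\mathcal{T}^{\mathsf{b}}}$ means the full subcategory of $\mathcal{D}$ consisting of the objects $(x,u)$ with $x\in\mathcal{T}^{\mathsf{b}}$.

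Second, Theorem \ref{main theorem}(i) states, with no extra hypothesis, that $(x,u)\in\mathcal{D}^{\mathsf{b}}$ if and only if $x\in\mathcal{T}^{\mathsf{b}}$. So $\mathcal{D}^{\mathsf{b}}$ is exactly the subcategory above, and the two sides agree.

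The only delicate point is the notation $\mathsf{M}\lMod_{\mathcal{U}}$ for a subcategory $\mathcal{U}$, which was introduced under the assumption that $\mathsf{M}$ restricts to $\mathcal{U}$. For $\mathcal{U}=\mathsf{broad}_{\Sigma}(\mathcal{T}^{\mathsf{b}}_c)$ this restriction holds, and we check it as follows. Since $\mathsf{M}$ is exact and commutes with coproducts, it sends $\mathsf{thick}(\cup_{x\in\mathcal{X}}\Add(x))$ into $\mathsf{thick}(\cup_{x\in\mathcal{X}}\Add(\mathsf{M}x))$. The latter lies in $\mathsf{broad}_{\Sigma}(\mathcal{X})$ because $\mathsf{M}(\mathcal{X})\subseteq\mathcal{X}$. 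In any case, Lemma \ref{paraskhnio} is stated in exactly the form we need, and in the proof it is read as the objects $(x,u)$ with $x$ in the given subcategory. So no further obstacle is expected and the argument is essentially formal.
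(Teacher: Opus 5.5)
Your proof is correct and is essentially the paper's argument: the same chain $(\mathsf{M}\lMod_{\mathcal{T}})^{\mathsf{b}}=\mathsf{M}\lMod_{\mathcal{T}^{\mathsf{b}}}=\mathsf{M}\lMod_{\mathsf{broad}_{\Sigma}(\mathcal{T}^{\mathsf{b}}_c)}=\mathsf{broad}_{\Sigma}(\mathsf{M}\lMod_{\mathcal{T}^{\mathsf{b}}_c})=\mathsf{broad}_{\Sigma}((\mathsf{M}\lMod_{\mathcal{T}})^{\mathsf{b}}_c)$, using Theorem \ref{main theorem}(i), Lemma \ref{paraskhnio} and Theorem \ref{main theorem}(iv), read in the opposite order. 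Your check that the exact, coproduct-preserving $\mathsf{M}$ maps $\mathsf{broad}_{\Sigma}(\mathcal{T}^{\mathsf{b}}_c)=\mathcal{T}^{\mathsf{b}}$ into itself is the same inclusion the paper verifies to justify the first equality.
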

\begin{proof}
    We have the following equalities 
      \begin{align*}
          (\mathsf{M}\lMod{_{\mathcal{T}})^{\mathsf{b}}} & = \mathsf{M}\lMod{_{\mathcal{T}^{\mathsf{b}}}} &   \text{Theorem \ref{main theorem}(i)} \\ 
          & = \mathsf{M}\lMod{_{\mathsf{broad}_{\Sigma}(\mathcal{T}^{\mathsf{b}}_c)}} \\ 
          & = \mathsf{broad}_{\Sigma}(\mathsf{M}\lMod{_{\mathcal{T}^{\mathsf{b}}_c}}) & \text{Lemma \ref{paraskhnio}} \\ 
          & = \mathsf{broad}_{\Sigma}((\mathsf{M}\lMod{_{\mathcal{T}}})^{\mathsf{b}}_c) & \text{Theorem \ref{main theorem}(iv)}
      \end{align*}
      where the first one uses the inclusion $\mathsf{M}(\mathcal{T}^{\mathsf{b}})\subseteq \mathcal{T}^{\mathsf{b}}$, which can be seen as follows 
      \[
      \mathsf{M}(\mathcal{T}^{\mathsf{b}})=\mathsf{M}(\mathsf{broad}_{\Sigma}\mathcal{T}^{\mathsf{b}}_c)\subseteq \mathsf{thick}(\mathsf{M}(\cup_{x\in\mathcal{T}^{\mathsf{b}}_c}\Add(x)))\subseteq \mathsf{thick}(\cup_{x\in\mathcal{T}^{\mathsf{b}}_c}\Add(x))=\mathcal{T}^{\mathsf{b}}. \qedhere
      \]
\end{proof}

\subsection{Comparing homological invariants for separable extensions} \label{Comparing homological invariants for separable extensions} It is due time to prove our first main theorem.

\begin{thm} \label{comparing regularity} 
    Let $\mathcal{T}$ be a compactly generated triangulated category and $\mathsf{M}$ a separable exact smashing monad. The following hold for $\mathcal{D}\coloneqq \mathsf{M}\lMod_{\mathcal{T}}$. 
    \begin{itemize}
        \item[\textnormal{(i)}] Assume that $\mathsf{M}(\mathcal{T}^{\mathsf{b}})\subseteq \mathcal{T}^{\mathsf{b}}$. If $\mathcal{T}$ has finite global dimension, then $\mathcal{D}$ has finite global dimension. 
        \item[\textnormal{(ii)}] Assume that $\mathsf{M}(\mathcal{T}^{\mathsf{b}})\subseteq \mathcal{T}^{\mathsf{b}}$ and $\mathsf{U}_{\mathsf{M}}(\mathcal{D}^{\mathsf{b}}_p)\subseteq \mathcal{T}^{\mathsf{b}}_p$. If $\mathcal{T}$ is Gorenstein, then $\mathcal{D}$ is Gorenstein. 
        \item[\textnormal{(iii)}] Assume that  $\mathcal{T}$ is $k$-linear and $\mathsf{M}(\mathcal{T}^{\mathsf{c}})\subseteq \mathcal{T}^{\mathsf{c}}$. If $\mathcal{T}$ is regular, then $\mathcal{D}$ is regular. 
    \end{itemize}
\end{thm}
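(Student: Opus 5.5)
The plan is to read off each claim from Theorem \ref{main theorem}, which describes each intrinsic subcategory of $\mathcal{D}$ as the modules $(x,u)$ whose underlying object $x$ lies in the corresponding subcategory of $\mathcal{T}$. In each part, the hypotheses are exactly what is needed to make those descriptions into ``if and only if'' statements.

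For (i), assume $\mathcal{T}^{\mathsf{b}}=\mathcal{T}^{\mathsf{b}}_i$; this is the formulation of finite global dimension via \cite[Lemma 4.1]{regular}. Take $(x,u)\in\mathcal{D}^{\mathsf{b}}$. By Theorem \ref{main theorem}(i), $x\in\mathcal{T}^{\mathsf{b}}=\mathcal{T}^{\mathsf{b}}_i$. The first claim of Theorem \ref{main theorem}(ii), which needs no extra hypothesis, then gives $(x,u)\in\mathcal{D}^{\mathsf{b}}_i$. So $\mathcal{D}^{\mathsf{b}}\subseteq\mathcal{D}^{\mathsf{b}}_i$. For the reverse inclusion, let $(x,u)\in\mathcal{D}^{\mathsf{b}}_i$. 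Since $\mathsf{M}(\mathcal{T}^{\mathsf{b}})\subseteq\mathcal{T}^{\mathsf{b}}$, Theorem \ref{main theorem}(ii) gives $x\in\mathcal{T}^{\mathsf{b}}_i=\mathcal{T}^{\mathsf{b}}$, and Theorem \ref{main theorem}(i) then gives $(x,u)\in\mathcal{D}^{\mathsf{b}}$. Hence $\mathcal{D}^{\mathsf{b}}=\mathcal{D}^{\mathsf{b}}_i$, which is finite global dimension by \cite[Lemma 4.1]{regular}. Using the $\mathcal{T}^{\mathsf{b}}_i$ formulation rather than the $\mathcal{T}^{\mathsf{b}}_p$ one is the key choice here, because part (ii) of Theorem \ref{main theorem} has exactly the hypothesis we are given.

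For (ii), assume $\mathcal{T}^{\mathsf{b}}_p=\mathcal{T}^{\mathsf{b}}_i$. Under the two hypotheses, Theorem \ref{main theorem}(ii) says $(x,u)\in\mathcal{D}^{\mathsf{b}}_i$ if and only if $x\in\mathcal{T}^{\mathsf{b}}_i$. Theorem \ref{main theorem}(iii) says $(x,u)\in\mathcal{D}^{\mathsf{b}}_p$ if and only if $x\in\mathcal{T}^{\mathsf{b}}_p$. Since the two subcategories of $\mathcal{T}$ coincide, so do $\mathcal{D}^{\mathsf{b}}_p$ and $\mathcal{D}^{\mathsf{b}}_i$.

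For (iii), assume $\mathcal{T}^{\mathsf{b}}_c=\mathcal{T}^{\mathsf{c}}$. By Theorem \ref{main theorem}(iv), $(x,u)\in\mathcal{D}^{\mathsf{b}}_c$ if and only if $x\in\mathcal{T}^{\mathsf{b}}_c=\mathcal{T}^{\mathsf{c}}$. By Theorem \ref{balmer2}, using $\mathsf{M}(\mathcal{T}^{\mathsf{c}})\subseteq\mathcal{T}^{\mathsf{c}}$, we have $\mathcal{D}^{\mathsf{c}}=\mathsf{M}\lMod_{\mathcal{T}^{\mathsf{c}}}$, i.e.\ $(x,u)\in\mathcal{D}^{\mathsf{c}}$ if and only if $x\in\mathcal{T}^{\mathsf{c}}$. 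Comparing the two descriptions gives $\mathcal{D}^{\mathsf{b}}_c=\mathcal{D}^{\mathsf{c}}$.

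No step here is a real obstacle; the only point needing care is in (i), where one must choose the $\mathcal{T}^{\mathsf{b}}=\mathcal{T}^{\mathsf{b}}_i$ characterization so that no hypothesis on $\mathsf{U}_{\mathsf{M}}(\mathcal{D}^{\mathsf{b}}_p)$ is required.
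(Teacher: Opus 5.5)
Your proof is correct and follows the paper's own argument. In (i) and (ii) you transport the equality of subcategories of $\mathcal{T}$ to $\mathcal{D}$ using the membership criteria of Theorem \ref{main theorem}(i)--(iii), working with the $\mathcal{T}^{\mathsf{b}}=\mathcal{T}^{\mathsf{b}}_i$ formulation in (i). In (iii) you combine the description of compacts from Theorem \ref{balmer2} with Theorem \ref{main theorem}(iv).
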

\begin{proof}
    (i) An object $(x,u)$ of $\mathcal{D}$ belongs in $\mathcal{D}^{\mathsf{b}}$ if and only if $x\in\mathcal{T}^{\mathsf{b}}$, by Theorem \ref{main theorem}(i). Since $\mathcal{T}$ is assumed to have finite global dimension, the latter is equivalent to $x$ belonging in $\mathcal{T}^{\mathsf{b}}_i$, which is equivalent to $(x,u)\in\mathcal{D}^{\mathsf{b}}_i$ by Theorem \ref{main theorem}(ii) and the assumption. 

    (ii) Consider an object $(x,u)$ of $\mathcal{D}$. If the latter belongs in $\mathcal{D}^{\mathsf{b}}_p$, then it follows from the assumption that $x$ belongs in $\mathcal{T}^{\mathsf{b}}_p$ which coincides with $\mathcal{T}^{\mathsf{b}}_i$ by the assumption. Therefore, it follows from Theorem~\ref{main theorem}(ii) that $(x,u)$ belongs in $\mathcal{D}^{\mathsf{b}}_i$. Similarly, if $(x,u)$ belongs in $\mathcal{D}^{\mathsf{b}}_i$, then it follows from Theorem \ref{main theorem}(ii) that $x$ belongs in $\mathcal{T}^{\mathsf{b}}_i=\mathcal{T}^{\mathsf{b}}_p$ and as a consequence of Theorem \ref{main theorem}(iii), $(x,u)$ belongs in $\mathcal{D}^{\mathsf{b}}_p$.

    (iii) By the assumption $\mathsf{M}(\mathcal{T}^{\mathsf{c}})\subseteq \mathcal{T}^{\mathsf{c}}$ and Theorem \ref{balmer2}, it follows that an object $(x,u)$ of $\mathcal{D}$ is compact if and only if $x$ is compact in $\mathcal{T}$. The claim then follows as in (i), using Theorem \ref{main theorem}(iv). 
\end{proof}

Under an additional assumption on the monad, the converse holds. 

\begin{prop} \label{comparing regularity 2}
    Let $\mathcal{T}$ be a compactly generated triangulated category and $\mathsf{M}$ a separable exact smashing monad such that every object $x$ of $\mathcal{T}$ is a direct summand of $\mathsf{M}(x)$. The following hold for $\mathcal{D}\coloneqq\mathsf{M}\lMod_{\mathcal{T}}$. 
    \begin{itemize}
        \item[\textnormal{(i)}] Assume that the inclusion $\mathsf{M}(\mathcal{T}^{\mathsf{b}})\subseteq \mathcal{T}^{\mathsf{b}}$ holds. If $\mathcal{D}$ has finite global dimension, then $\mathcal{T}$ has finite global dimension.
        \item[\textnormal{(ii)}] Assume that the inclusions $\mathsf{M}(\mathcal{T}^{\mathsf{b}})\subseteq\mathcal{T}^{\mathsf{b}}$, $\mathsf{M}(\mathcal{T}^{\mathsf{b}}_i)\subseteq\mathcal{T}^{\mathsf{b}}_i$ and $\mathsf{U}_{\mathsf{M}}(\mathcal{D}^{\mathsf{b}}_p)\subseteq\mathcal{T}^{\mathsf{b}}_p$ hold. If $\mathcal{D}$ is Gorenstein, then $\mathcal{T}$ is Gorenstein. 
        \item[\textnormal{(iii)}] Assume that $\mathcal{T}$ is $k$-linear and that the inclusions $\mathsf{M}(\mathcal{T}^{\mathsf{c}})\subseteq\mathcal{T}^{\mathsf{c}}$ and $\mathsf{M}(\mathcal{T}^{\mathsf{b}}_c)\subseteq\mathcal{T}^{\mathsf{b}}_c$ hold. If $\mathcal{D}$ is regular, then $\mathcal{T}$ is regular. 
    \end{itemize}
\end{prop}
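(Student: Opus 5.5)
The plan in all three parts is the same. Take an object $x$ of $\mathcal{T}$ in the larger subcategory. Pass to $\mathsf{T}_{\mathsf{M}}(x)=(\mathsf{M}(x),\mu_x)\in\mathcal{D}$ and use Theorem \ref{main theorem} to see that it lies in the corresponding subcategory of $\mathcal{D}$. Use the hypothesis on $\mathcal{D}$ to move it into the smaller subcategory of $\mathcal{D}$. Apply Theorem \ref{main theorem} again to conclude that $\mathsf{M}(x)=\mathsf{U}_{\mathsf{M}}\mathsf{T}_{\mathsf{M}}(x)$ lies in the smaller subcategory of $\mathcal{T}$. Finally, since every intrinsic subcategory is thick and $x$ is a summand of $\mathsf{M}(x)$ by assumption, $x$ lies there too. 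Each inclusion needed for a given implication is exactly one of the hypotheses listed in the statement.

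\textbf{(i)} Let $x\in\mathcal{T}^{\mathsf{b}}$. Then $\mathsf{M}(x)\in\mathcal{T}^{\mathsf{b}}$ by assumption, so $(\mathsf{M}(x),\mu_x)\in\mathcal{D}^{\mathsf{b}}$ by Theorem \ref{main theorem}(i). Since $\mathcal{D}$ has finite global dimension, this object lies in $\mathcal{D}^{\mathsf{b}}_i$. By Theorem \ref{main theorem}(ii), using $\mathsf{M}(\mathcal{T}^{\mathsf{b}})\subseteq\mathcal{T}^{\mathsf{b}}$, we get $\mathsf{M}(x)\in\mathcal{T}^{\mathsf{b}}_i$, hence $x\in\mathcal{T}^{\mathsf{b}}_i$. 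Thus $\mathcal{T}^{\mathsf{b}}\subseteq\mathcal{T}^{\mathsf{b}}_i$. The reverse inclusion $\mathcal{T}^{\mathsf{b}}_i\subseteq\mathcal{T}^{\mathsf{b}}$ should hold in general (this is part of \cite[Lemma 4.1]{regular}), so $\mathcal{T}^{\mathsf{b}}=\mathcal{T}^{\mathsf{b}}_i$, which is finite global dimension by Definition \ref{notions of regularity}.

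\textbf{(ii)} Let $x\in\mathcal{T}^{\mathsf{b}}_i$. Then $\mathsf{M}(x)\in\mathcal{T}^{\mathsf{b}}_i$, so $\mathsf{T}_{\mathsf{M}}(x)\in\mathcal{D}^{\mathsf{b}}_i=\mathcal{D}^{\mathsf{b}}_p$ by Theorem \ref{main theorem}(ii). Applying $\mathsf{U}_{\mathsf{M}}$ and the hypothesis $\mathsf{U}_{\mathsf{M}}(\mathcal{D}^{\mathsf{b}}_p)\subseteq\mathcal{T}^{\mathsf{b}}_p$ gives $\mathsf{M}(x)\in\mathcal{T}^{\mathsf{b}}_p$, hence $x\in\mathcal{T}^{\mathsf{b}}_p$.

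Conversely, let $x\in\mathcal{T}^{\mathsf{b}}_p$. From the proof of Theorem \ref{main theorem}(iii), $\mathsf{T}_{\mathsf{M}}(\mathcal{T}^{\mathsf{b}}_p)\subseteq\mathcal{D}^{\mathsf{b}}_p$. This relies only on $\mathsf{U}_{\mathsf{M}}(\mathcal{D}^{\mathsf{b}})\subseteq\mathcal{T}^{\mathsf{b}}$ together with Lemma \ref{perp under adjunction}(ii), and the former follows from Lemma \ref{perp under adjunction}(i). So $\mathsf{T}_{\mathsf{M}}(x)\in\mathcal{D}^{\mathsf{b}}_p=\mathcal{D}^{\mathsf{b}}_i$. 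Theorem \ref{main theorem}(ii), under $\mathsf{M}(\mathcal{T}^{\mathsf{b}})\subseteq\mathcal{T}^{\mathsf{b}}$, then gives $\mathsf{M}(x)\in\mathcal{T}^{\mathsf{b}}_i$, hence $x\in\mathcal{T}^{\mathsf{b}}_i$. Thus $\mathcal{T}^{\mathsf{b}}_p=\mathcal{T}^{\mathsf{b}}_i$.

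\textbf{(iii)} Let $x\in\mathcal{T}^{\mathsf{b}}_c$. Then $\mathsf{M}(x)\in\mathcal{T}^{\mathsf{b}}_c$, so $\mathsf{T}_{\mathsf{M}}(x)\in\mathcal{D}^{\mathsf{b}}_c$ by Theorem \ref{main theorem}(iv), and regularity of $\mathcal{D}$ gives $\mathsf{T}_{\mathsf{M}}(x)\in\mathcal{D}^{\mathsf{c}}$. Because $\mathsf{M}(\mathcal{T}^{\mathsf{c}})\subseteq\mathcal{T}^{\mathsf{c}}$, Theorem \ref{balmer2} says that an object of $\mathcal{D}$ is compact exactly when its underlying object is. Hence $\mathsf{M}(x)\in\mathcal{T}^{\mathsf{c}}$, and so $x\in\mathcal{T}^{\mathsf{c}}$. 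This proves $\mathcal{T}^{\mathsf{b}}_c\subseteq\mathcal{T}^{\mathsf{c}}$. For the reverse inclusion, let $x\in\mathcal{T}^{\mathsf{c}}$. Then $\mathsf{T}_{\mathsf{M}}(x)\in\mathcal{D}^{\mathsf{c}}=\mathcal{D}^{\mathsf{b}}_c$, so Theorem \ref{main theorem}(iv) gives $\mathsf{M}(x)\in\mathcal{T}^{\mathsf{b}}_c$, and thickness gives $x\in\mathcal{T}^{\mathsf{b}}_c$.

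The only non-mechanical point is which inclusions hold for free. In (i) I need $\mathcal{T}^{\mathsf{b}}_i\subseteq\mathcal{T}^{\mathsf{b}}$, which I take from \cite[Lemma 4.1]{regular}. In (ii) I need $\mathsf{T}_{\mathsf{M}}(\mathcal{T}^{\mathsf{b}}_p)\subseteq\mathcal{D}^{\mathsf{b}}_p$, which I extract from the proof of Theorem \ref{main theorem}(iii). I expect no other obstacle.
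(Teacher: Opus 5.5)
\textbf{Gap in part (i).} Your proofs of (ii) and (iii) are correct and follow the paper's own argument, and in fact you are more explicit than the paper about which hypothesis is used at each step. The first half of (i), $\mathcal{T}^{\mathsf{b}}\subseteq\mathcal{T}^{\mathsf{b}}_i$, is also exactly what the paper does.

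The gap is the other half of (i). The inclusion $\mathcal{T}^{\mathsf{b}}_i\subseteq\mathcal{T}^{\mathsf{b}}$ is \emph{not} true in general. What the paper quotes from \cite[Lemma 4.1]{regular} is only that the two \emph{equalities} $\mathcal{T}^{\mathsf{b}}=\mathcal{T}^{\mathsf{b}}_p$ and $\mathcal{T}^{\mathsf{b}}=\mathcal{T}^{\mathsf{b}}_i$ are equivalent.

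For a counterexample, let $k$ be a field and take $\mathcal{T}=\mathsf{D}(A)$ with $A=k[x,x^{-1}]$, $|x|=2$, and zero differential. Multiplication by the invertible element $x$ gives $H^n(y)\cong H^{n+2}(y)$ for every object $y$. Hence $\mathcal{T}^{\mathsf{b}}=A^{\padova}=0$, and therefore $\mathcal{T}^{\mathsf{b}}_i=\mathcal{T}^{\mathsf{b}}_p=\mathcal{T}$. More generally, Brown--Comenetz duals of compact objects always lie in $\mathcal{T}^{\mathsf{b}}_i$ but need not be bounded. In this example $\mathcal{T}^{\mathsf{b}}\subseteq\mathcal{T}^{\mathsf{b}}_i$ holds trivially, yet $\mathcal{T}$ does not have finite global dimension. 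So the inclusion you prove does not by itself give the conclusion. The reverse inclusion has to be extracted from the hypothesis on $\mathcal{D}$, which is what the paper does: it transfers $x$ through $\mathcal{D}$ again and uses that $x$ is a summand of $\mathsf{M}(x)$.

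\textbf{How to repair it.} The repair uses only tools you already have.
\begin{enumerate}
\item Do not transfer $x\in\mathcal{T}^{\mathsf{b}}_i$ directly. That would require $\mathsf{T}_{\mathsf{M}}(x)\in\mathcal{D}^{\mathsf{b}}_i$, i.e.\ $\mathsf{M}(x)\in\mathcal{T}^{\mathsf{b}}_i$, which is only assumed in (ii). Go through $\mathcal{T}^{\mathsf{b}}_p$ instead.
\item Let $x\in\mathcal{T}^{\mathsf{b}}_p$. The inclusion $\mathsf{T}_{\mathsf{M}}(\mathcal{T}^{\mathsf{b}}_p)\subseteq\mathcal{D}^{\mathsf{b}}_p$, which you already extract in (ii), gives $\mathsf{T}_{\mathsf{M}}(x)\in\mathcal{D}^{\mathsf{b}}_p=\mathcal{D}^{\mathsf{b}}$.
\item By Theorem \ref{main theorem}(i), $\mathsf{M}(x)\in\mathcal{T}^{\mathsf{b}}$. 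Since $x$ is a summand of $\mathsf{M}(x)$ and $\mathcal{T}^{\mathsf{b}}$ is thick, $x\in\mathcal{T}^{\mathsf{b}}$.
\item This gives $\mathcal{T}^{\mathsf{c}}\subseteq\mathcal{T}^{\mathsf{b}}_p\subseteq\mathcal{T}^{\mathsf{b}}$. Since $(-)^{\padova}$ reverses inclusions, $\mathcal{T}^{\mathsf{b}}_i=(\mathcal{T}^{\mathsf{b}})^{\padova}\subseteq(\mathcal{T}^{\mathsf{c}})^{\padova}=\mathcal{T}^{\mathsf{b}}$.
\end{enumerate}
Alternatively, note that $\mathcal{T}^{\mathsf{b}}\subseteq\mathcal{T}^{\mathsf{b}}_i$ and $\mathcal{T}^{\mathsf{b}}\subseteq\mathcal{T}^{\mathsf{b}}_p$ are the same condition: both say $\mathsf{Hom}_{\mathcal{T}}(b,b'[n])=0$ for $|n|\gg0$ and all $b,b'\in\mathcal{T}^{\mathsf{b}}$. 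Combined with $\mathcal{T}^{\mathsf{b}}_p\subseteq\mathcal{T}^{\mathsf{b}}$, this yields $\mathcal{T}^{\mathsf{b}}=\mathcal{T}^{\mathsf{b}}_p$ directly.
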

\begin{proof}
    (i) Given an object $x\in\mathcal{T}^{\mathsf{b}}$, it follows from Theorem \ref{main theorem}(i) and the assumption that $\mathsf{T}_{\mathsf{M}}(x)\in\mathcal{D}^{\mathsf{b}}$. Since $\mathcal{D}$ is assumed to have finite global dimension, $x$ belongs in $\mathcal{D}^{\mathsf{b}}_i$, and therefore $\mathsf{U}_{\mathsf{M}}\mathsf{T}_{\mathsf{M}}(x)=\mathsf{M}(x)$ belongs in $\mathcal{T}^{\mathsf{b}}_i$ by Theorem \ref{main theorem}(ii). Since $x$ is a summand of $\mathsf{M}(x)$, it follows that $x$ itself belongs in $\mathcal{T}^{\mathsf{b}}_i$. Similarly, if we assume that $x$ belongs in $\mathcal{T}^{\mathsf{b}}_i$, then $\mathsf{M}(x)$ belongs in $\mathcal{T}^{\mathsf{b}}$ and has $x$ as a summand. 

    (ii) If $x\in\mathcal{T}^{\mathsf{b}}_p$, then it follows (as in the proof of Theorem \ref{main theorem}(iii)) that $\mathsf{T}_{\mathsf{M}}(x)$ belongs in $\mathcal{D}^{\mathsf{b}}_p=\mathcal{D}^{\mathsf{b}}_i$. By Theorem \ref{main theorem}(ii) and the inclusion $\mathsf{M}(\mathcal{T}^{\mathsf{b}}_i)\subseteq \mathcal{T}^{\mathsf{b}}_i$, it follows that $\mathsf{M}(x)=\mathsf{U}_{\mathsf{M}}\mathsf{T}_{\mathsf{M}}(x)$ belongs in $\mathcal{T}^{\mathsf{b}}_i$ and since $x$ is a summand of $\mathsf{M}(x)$, we infer that $x$ itself belongs in $\mathcal{T}^{\mathsf{b}}_i$, which settles the inclusion $\mathcal{T}^{\mathsf{b}}_p\subseteq \mathcal{T}^{\mathsf{b}}_i$. For the opposite inclusion, given an object $x\in\mathcal{T}^{\mathsf{b}}_i$, it follows from Theorem \ref{main theorem}(ii) that $\mathsf{T}_{\mathsf{M}}(x)$ belongs in $\mathcal{D}^{\mathsf{b}}_i=\mathcal{D}^{\mathsf{b}}_p$. Therefore, it follows that $\mathsf{M}(x)=\mathsf{U}_{\mathsf{M}}\mathsf{T}_{\mathsf{M}}(x)$ belongs in $\mathcal{T}^{\mathsf{b}}_p$ and, as before, we infer that $x\in\mathcal{T}^{\mathsf{b}}_p$. 

    (iii) Given a compact object $x$ of $\mathcal{T}$, it follows that $\mathsf{T}_{\mathsf{M}}(x)$ belongs in $\mathcal{D}^{\mathsf{c}}=\mathcal{D}^{\mathsf{b}}_c$ and therefore we infer from Theorem \ref{main theorem}(iv) that $\mathsf{M}(x)=\mathsf{U}_{\mathsf{M}}\mathsf{T}_{\mathsf{M}}(x)$ belongs in $\mathcal{T}^{\mathsf{b}}_c$, thus $x$ itself belongs in $\mathcal{T}^{\mathsf{b}}_c$ - being a summand of $\mathsf{M}(x)$. For the other inclusion, consider an object $x$ in $\mathcal{T}^{\mathsf{b}}_c$. Then it follows from Theorem \ref{main theorem}(iv) and the inclusion $\mathsf{M}(\mathcal{T}^{\mathsf{b}}_c)\subseteq \mathcal{T}^{\mathsf{b}}_c$ that $\mathsf{T}_{\mathsf{M}}(x)$ belongs in $\mathcal{D}^{\mathsf{b}}_c=\mathcal{D}^{\mathsf{c}}$. Since $\mathsf{M}$ is compact preserving, so is $\mathsf{U}_{\mathsf{M}}$ and therefore $\mathsf{M}(x)=\mathsf{U}_{\mathsf{M}}\mathsf{T}_{\mathsf{M}}(x)$ belongs in $\mathcal{T}^{\mathsf{c}}$, implying also that $x$ is compact.
\end{proof}

\section{Working with Examples}   \label{Working with the examples}

\subsection{Equivariant triangulated categories} \label{Equivariant triangulated categories} Let $\mathcal{T}$ be a triangulated category with coproducts and assume an action of a finite group $G$ on $\mathcal{T}$ such that $|G|$ is invertible in $\mathcal{T}$. As discussed in \ref{equivariant triangulated categories}, there is an adjoint triple of triangle functors $(\mathsf{Ind},\mathsf{F},\mathsf{Ind})$ and the composite $\mathsf{F}\circ \mathsf{Ind}\eqqcolon\mathsf{M}$ is monadic, meaning that the comparison functor $\mathcal{T}^{\mathsf{G}}\rightarrow\mathsf{M}\lMod_{\mathcal{T}}$ is an equivalence. Using the ambidextrous adjunction above, we see that the functors $\mathsf{F}$ and $\mathsf{Ind}$ restrict to the subcategories of compact objects (being left adjoints to coproduct preserving functors). Moreover, we can use Lemma~\ref{perp under adjunction} to deduce that the functors $\mathsf{F}$ and  $\mathsf{Ind}$ restrict to any of the subcategories from Definition \ref{def:subcategories}. As a consequence, the monad $\mathsf{M}=\mathsf{F}\circ \mathsf{Ind}$ is smashing, compact preserving and restricts to any of the subcategories from Definition \ref{def:subcategories}. Moreover, since the autoequivalences $\rho_g\colon \mathcal{T}\rightarrow \mathcal{T}$, that define the action of $G$, restrict to autoequivalences $\rho_g\colon \mathcal{T}^{\mathsf{b}}_?\rightarrow \mathcal{T}^{\mathsf{b}}_?$ for any $?\in\{\varnothing, i,p,c\}$ (where for $?=c$ we assume that $\mathcal{T}$ is $k$-linear), it follows that the action of $G$ on $\mathcal{T}$ restricts to an action on the latter subcategories. 

\begin{cor} \label{subcategories for equivariant}
Let $\mathcal{T}$ be a compactly generated triangulated category and assume an action of a finite group $G$ on $\mathcal{T}$ such that $|G|$ is invertible in $\mathcal{T}$. The following hold. 
    \begin{itemize}
        \item[\textnormal{(i)}] $(\mathcal{T}^{\mathsf{c}})^G=(\mathcal{T}^G)^{\mathsf{c}}$;
        \item[\textnormal{(ii)}] $(\mathcal{T}^{\mathsf{b}})^G= (\mathcal{T}^G)^{\mathsf{b}}$; 
        \item[\textnormal{(iii)}] $(\mathcal{T}^{\mathsf{b}}_i)^G= (\mathcal{T}^G)^{\mathsf{b}}_i$; 
        \item[\textnormal{(iv)}] $(\mathcal{T}^{\mathsf{b}}_p)^G= (\mathcal{T}^G)^{\mathsf{b}}_p$;

        \item[\textnormal{(v)}]
        $(\mathcal{T}^{\mathsf{b}}_c)^G= (\mathcal{T}^G)^{\mathsf{b}}_c$ \textnormal{(}when $\mathcal{T}$ is $k$-linear\textnormal{)}.
    \end{itemize}
    as triangulated subcategories of $\mathcal{T}^G$. 
\end{cor}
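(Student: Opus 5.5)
The plan is to deduce every item from Theorem \ref{main theorem} and Proposition \ref{compactly generated equivariant}, using the preceding discussion of the equivariant setup. We identify $\mathcal{T}^G$ with $\mathsf{M}\lMod_{\mathcal{T}}$ through the comparison functor, which is an equivalence. Here $\mathsf{M}=\mathsf{F}\circ\mathsf{Ind}$ is a separable exact smashing monad. Under this identification the forgetful functor $\mathsf{F}$ corresponds to $\mathsf{U}_{\mathsf{M}}$, and $\mathsf{Ind}$ corresponds to $\mathsf{T}_{\mathsf{M}}$. Both identifications respect the canonical triangulated structures by Lemma \ref{pretriangulated without idempotent}.

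For a subcategory $\mathcal{U}$ of $\mathcal{T}$ stable under the action, $\mathcal{U}^G$ is the full subcategory of $\mathcal{T}^G$ on the pairs $(x,\phi)$ with $x\in\mathcal{U}$. Under the comparison equivalence it corresponds to $\mathsf{M}\lMod_{\mathcal{U}}$. This uses that the restricted comparison $\mathcal{U}^G\to\mathsf{M}\lMod_{\mathcal{U}}$ is again an equivalence, by \cite[Proposition 3.11]{elagin} applied to the restricted action. So each item reduces to showing that an object $(x,\phi)$ lies in the relevant intrinsic subcategory of $\mathcal{T}^G$ if and only if $x$ lies in the corresponding subcategory of $\mathcal{T}$.

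Item (i) is exactly Proposition \ref{compactly generated equivariant}. For (ii) we apply Theorem \ref{main theorem}(i) directly: $(x,\phi)\in(\mathcal{T}^G)^{\mathsf{b}}$ if and only if $x\in\mathcal{T}^{\mathsf{b}}$. Item (v) follows in the same way from Theorem \ref{main theorem}(iv).

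For (iii) we invoke Theorem \ref{main theorem}(ii), which needs the hypothesis $\mathsf{M}(\mathcal{T}^{\mathsf{b}})\subseteq\mathcal{T}^{\mathsf{b}}$. This holds by the discussion above: $\mathsf{Ind}$ and $\mathsf{F}$ restrict to the bounded subcategories by Lemma \ref{perp under adjunction}, because each is both a left and a right adjoint of the other and both preserve compacts. Alternatively, $\mathsf{M}(x)=\oplus_g\rho_g(x)$ with each $\rho_g$ an exact autoequivalence, and exact autoequivalences preserve each intrinsic subcategory.

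For (iv) we invoke Theorem \ref{main theorem}(iii), which needs the hypothesis $\mathsf{U}_{\mathsf{M}}(\mathcal{D}^{\mathsf{b}}_p)\subseteq\mathcal{T}^{\mathsf{b}}_p$, that is, $\mathsf{F}((\mathcal{T}^G)^{\mathsf{b}}_p)\subseteq\mathcal{T}^{\mathsf{b}}_p$. This is where the ambidexterity is essential. $\mathsf{F}$ is left adjoint to $\mathsf{Ind}$, and $\mathsf{Ind}$ maps $\mathcal{T}^{\mathsf{b}}$ into $(\mathcal{T}^G)^{\mathsf{b}}$ by Theorem \ref{main theorem}(i), since $\mathsf{F}\mathsf{Ind}(x)=\mathsf{M}(x)\in\mathcal{T}^{\mathsf{b}}$. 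Lemma \ref{perp under adjunction}(ii), applied with $\mathsf{Ind}$ in the role of $\mathsf{F}$ and with $\mathcal{X}=\mathcal{T}^{\mathsf{b}}$, $\mathcal{Y}=(\mathcal{T}^G)^{\mathsf{b}}$, then gives $\mathsf{F}({}^{\padova}(\mathcal{T}^G)^{\mathsf{b}})\subseteq{}^{\padova}\mathcal{T}^{\mathsf{b}}$, which is the required inclusion. Equivalently, this is Remark \ref{comonad}, because here the comonad $\mathsf{N}$ is $\mathsf{M}$ itself.

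I expect the main obstacle to be purely bookkeeping: checking that transport along the comparison equivalence matches $\mathcal{U}^G$ with $\mathsf{M}\lMod_{\mathcal{U}}$, and that the intrinsic subcategories are invariant under exact equivalences. The latter is clear, since the subcategories are defined intrinsically in terms of compact objects and Hom vanishing.
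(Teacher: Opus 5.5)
Your proposal is correct and follows the paper's proof. Item (i) is Proposition~\ref{compactly generated equivariant}, and (ii)--(v) come from Theorem~\ref{main theorem}, with the hypotheses $\mathsf{M}(\mathcal{T}^{\mathsf{b}})\subseteq\mathcal{T}^{\mathsf{b}}$ and $\mathsf{F}((\mathcal{T}^G)^{\mathsf{b}}_p)\subseteq\mathcal{T}^{\mathsf{b}}_p$ supplied exactly as in the paper's preceding discussion, via the ambidextrous adjunction $(\mathsf{Ind},\mathsf{F},\mathsf{Ind})$ and Lemma~\ref{perp under adjunction}. Your appeal to Elagin for the restricted action is harmless but unnecessary: the comparison functor preserves underlying objects, and the intrinsic subcategories are closed under isomorphism.
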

\begin{proof}
    Statement (i) was already proved in Proposition \ref{compactly generated equivariant} and the rest follow from Theorem \ref{main theorem}.
\end{proof}

Note that $x \in \mathcal{T}$ is a summand of $\mathsf{M}(x)$, since $\mathsf{M}(x) = \oplus \rho_g(x)$, and therefore the condition of Proposition~\ref{comparing regularity 2} is satisfied. We thus obtain the following result.

\begin{cor} \label{regularity for equivariant}
    Assume the setup of Corollary \ref{subcategories for equivariant} and consider the following conditions. 
    \begin{itemize} 
        \item[\textnormal{(i)}] $\mathcal{T}$ has finite global dimension.
        \item[\textnormal{(ii)}] $\mathcal{T}$ is Gorenstein. 
        \item[\textnormal{(iii)}] $\mathcal{T}$ is regular \textnormal{(}when $\mathcal{T}$ is $k$-linear\textnormal{)}.
    \end{itemize}
    Then $\mathcal{T}$ satisfies any of \textnormal{(i)-(iii)} if and only if $\mathcal{T}^G$ satisfies the same condition.
\end{cor}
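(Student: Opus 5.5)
The plan is to reduce the statement to Theorem \ref{comparing regularity} and Proposition \ref{comparing regularity 2}, applied to the monad $\mathsf{M}=\mathsf{F}\circ\mathsf{Ind}$. We use the monadic identification $\mathcal{T}^G\simeq\mathsf{M}\lMod_{\mathcal{T}}$, which is an equivalence of triangulated categories. It is compatible with the canonical structure, since in both categories distinguished triangles are those detected by the forgetful functor. The intrinsic subcategories of Definition \ref{def:subcategories} are defined purely in terms of the triangulated structure and coproducts, so they are transported along this equivalence. Hence it suffices to prove each equivalence for $\mathcal{D}=\mathsf{M}\lMod_{\mathcal{T}}$.

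Next we check the hypotheses. As discussed before Corollary \ref{subcategories for equivariant}, $\mathsf{M}$ is separable because $|G|$ is invertible. It is exact and smashing, and it preserves compacts. By Lemma \ref{perp under adjunction} applied to the ambidextrous adjunction $(\mathsf{Ind},\mathsf{F},\mathsf{Ind})$, the functors $\mathsf{F}$ and $\mathsf{Ind}$ preserve each of $\mathcal{T}^{\mathsf{b}}$, $\mathcal{T}^{\mathsf{b}}_i$, $\mathcal{T}^{\mathsf{b}}_p$ and $\mathcal{T}^{\mathsf{b}}_c$, and hence so does $\mathsf{M}$. One point needs care: Lemma \ref{perp under adjunction} requires first that $\mathsf{F}$ and $\mathsf{Ind}$ map compacts to compacts. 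This gives preservation of bounded objects, and then of the $p$- and $i$-subcategories in turn. This yields every inclusion $\mathsf{M}(\mathcal{T}^{\mathsf{b}}_?)\subseteq\mathcal{T}^{\mathsf{b}}_?$ and $\mathsf{M}(\mathcal{T}^{\mathsf{c}})\subseteq\mathcal{T}^{\mathsf{c}}$. In particular $\mathsf{U}_{\mathsf{M}}=\mathsf{F}$ satisfies $\mathsf{U}_{\mathsf{M}}(\mathcal{D}^{\mathsf{b}}_p)\subseteq\mathcal{T}^{\mathsf{b}}_p$, which is also Corollary \ref{subcategories for equivariant}(iv). Finally, $x$ is a summand of $\mathsf{M}(x)=\oplus_g\rho_g(x)$ through the identity component $\rho_e\cong\mathsf{Id}$.

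With all hypotheses verified, Theorem \ref{comparing regularity}(i)--(iii) gives the implications from $\mathcal{T}$ to $\mathcal{T}^G$. Proposition \ref{comparing regularity 2}(i)--(iii) gives the converse implications.

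Alternatively, we can argue directly from Corollary \ref{subcategories for equivariant}. For example, $(\mathcal{T}^G)^{\mathsf{b}}=(\mathcal{T}^G)^{\mathsf{b}}_i$ means that $(x,\phi)$ has $x\in\mathcal{T}^{\mathsf{b}}$ if and only if $x\in\mathcal{T}^{\mathsf{b}}_i$. The converse direction then follows by applying this to $\mathsf{Ind}(x)$ and using the summand property.

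The only genuinely delicate step is the preservation of $\mathcal{T}^{\mathsf{b}}_p$ and $\mathcal{T}^{\mathsf{b}}_i$ by $\mathsf{M}$. It depends on having both adjoints of $\mathsf{F}$ equal to $\mathsf{Ind}$, so that Lemma \ref{perp under adjunction}(i) and (ii) can both be applied. I expect this to be a routine chain of applications of that lemma.
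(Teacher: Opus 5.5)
Your proposal is correct and is essentially the paper's proof. The paper identifies $\mathcal{T}^G$ with $\mathsf{M}\lMod_{\mathcal{T}}$ for $\mathsf{M}=\mathsf{F}\circ\mathsf{Ind}$, uses the ambidextrous adjunction and Lemma \ref{perp under adjunction} to see that $\mathsf{M}$ is smashing, compact-preserving and preserves all the intrinsic subcategories, notes that $x$ is a summand of $\oplus_g\rho_g(x)$, and then invokes Theorem \ref{comparing regularity} and Proposition \ref{comparing regularity 2}; your alternative via Corollary \ref{subcategories for equivariant} is just those two results unwound in the equivariant case.
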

\begin{proof}
    This is a consequence of Theorem \ref{comparing regularity} and Proposition \ref{comparing regularity 2}. 
\end{proof}

\subsubsection{Example 1: The derived category of a ring} Given an action of a finite group $G$ on a ring $R$, there is an induced action on $\Mod R$ and a ring denoted by $R\#G$, called the \emph{skew group ring}, which satisfy
\[
\Mod R\#G\simeq (\Mod R)^G,
\]
see \cite{reiten_riedtmann} for details. As a consequence of Theorem \ref{equivariant derived}, when $|G|$ is invertible in $R$, it follows that 
\[
\mathsf{D}(R)^G\simeq \mathsf{D}(R\#G)
\]
since $\mathsf{D}(R)$ is idempotent complete. We can therefore obtain the following well-known result of \cite{reiten_riedtmann} as an immediate consequence of Corollary \ref{regularity for equivariant}. 

\begin{cor}  \label{regularity for the derives category}
Let $R$ be a ring and $G$ a finite group acting on $R$ such that $|G|$ is invertible in $R$. The following hold. 
\begin{itemize}
    \item[\textnormal{(i)}] $\gd R<\infty$ if and only if $\gd R\#G<\infty$. 
    \item[\textnormal{(ii)}] $R$ is Gorenstein if and only if $R\#G$ is Gorenstein. 

    \item[\textnormal{(iii)}] When $R$ is a Noether algebra, then $R$ is regular if and only if $R\#G$ is regular. 
\end{itemize}
\end{cor}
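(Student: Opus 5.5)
The plan is to apply Corollary~\ref{regularity for equivariant} to $\mathcal{T}=\mathsf{D}(R)$ with the $G$-action induced from the action on $\Mod R$. Then translate each intrinsic condition into the classical ring-theoretic one using Proposition~\ref{derived categoery of a ring}, for both $R$ and $R\#G$. $\mathsf{D}(R)$ is compactly generated (by $R$), and $|G|$ is invertible in $\mathsf{D}(R)$ because it is invertible in $R$, so the hypotheses of the corollary hold. By Theorem~\ref{equivariant derived}, applied to the abelian category $\Mod R$ with $(\Mod R)^G\simeq \Mod R\#G$, there is an exact functor $\mathsf{D}(R\#G)\to\mathsf{D}(R)^G$ which is an equivalence up to retracts. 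I will check that it is an equivalence: $\mathsf{D}(R\#G)$ has coproducts and is therefore idempotent complete, so its essential image is closed under summands. I will also check that the triangulated structure on $\mathsf{D}(R)^G$ transported this way is the canonical one of Lemma~\ref{pretriangulated without idempotent}. This follows from uniqueness, since the forgetful functor $\mathsf{D}(R\#G)\to\mathsf{D}(R)$ is exact. Finally, the three properties in Definition~\ref{notions of regularity} are defined intrinsically, so they are invariant under triangle equivalence.

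Next, translate the conditions. By Proposition~\ref{derived categoery of a ring}, $\mathsf{D}(R)$ has finite global dimension iff $\mathsf{D}^{\mathsf{b}}(R)=\mathsf{K}^{\mathsf{b}}(\Proj R)$, which is iff every module has finite projective dimension. This is equivalent to $\gd R<\infty$ by the standard argument: a bound exists, since otherwise a direct sum of modules of increasing projective dimension lies in $\mathsf{D}^{\mathsf b}(R)$ but not in $\mathsf{K}^{\mathsf b}(\Proj R)$. Similarly, $\mathsf{K}^{\mathsf{b}}(\Proj R)=\mathsf{K}^{\mathsf{b}}(\Inj R)$ inside $\mathsf{D}(R)$ is equivalent to $R$ being Gorenstein in the sense recalled before Definition~\ref{notions of regularity}. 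Indeed, a projective module viewed as a stalk complex lies in $\mathsf{K}^{\mathsf b}(\Inj R)$ iff it has finite injective dimension, and dually for injectives; conversely, those conditions give the equality of thick closures. The same translations apply verbatim to $R\#G$.

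For (iii), $R$ is a Noether algebra, so $R\#G$, being finitely generated as an $R$-module, is again a Noether algebra over the same $k$. Moreover $\mathsf{D}(R)$ is $k$-linear, with the action $k$-linear provided $G$ acts by $k$-algebra automorphisms; I will assume this, as is implicit in the statement. Proposition~\ref{derived categoery of a ring} then gives $\mathcal{T}^{\mathsf b}_c=\mathsf{D}^{\mathsf b}(\smod R)$ and $\mathcal{T}^{\mathsf c}=\mathsf{K}^{\mathsf b}(\proj R)$, and their equality is exactly regularity of $R$; likewise for $R\#G$.

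The main obstacle is the careful identification in the first step: that $\mathsf{D}(R)^G\simeq\mathsf{D}(R\#G)$ is a triangle equivalence compatible with the canonical structure. The translation of the Gorenstein condition to modules also needs care, since the definitions of the two notions do not match on the nose.
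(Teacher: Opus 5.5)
Your proposal is correct and follows the paper's route. The paper derives the corollary directly from Corollary~\ref{regularity for equivariant} applied to $\mathsf{D}(R)$, using the identification $\mathsf{D}(R)^G\simeq\mathsf{D}(R\#G)$ from Theorem~\ref{equivariant derived} (via idempotent completeness) and the dictionary of Proposition~\ref{derived categoery of a ring}. You make explicit several steps the paper leaves implicit: that it is the idempotent completeness of $\mathsf{D}(R\#G)$ which upgrades the functor to an equivalence, the compatibility with the canonical triangulation, the translation of the three intrinsic conditions into $\gd R<\infty$, Gorensteinness and regularity, and the tacit $k$-linearity of the action in (iii).
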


We end this section with the following result, which was promised above Lemma \ref{paraskhnio}. 

\begin{cor} \label{perissos}
    Let $R$ be a commutative noetherian ring and $G$ a finite group acting on $R$ such that $|G|$ is invertible in $R$. If condition \textnormal{(\ref{paraskhnio2})} holds for $R$, then it also holds for $R\#G$.
\end{cor}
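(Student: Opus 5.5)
The plan is to apply Proposition \ref{broad paraskhnio} to $\mathcal{T}=\mathsf{D}(R)$ with the monad $\mathsf{M}=\mathsf{F}\circ\mathsf{Ind}$ coming from the induced $G$-action, and then to translate back along $\mathsf{D}(R)^G\simeq\mathsf{D}(R\#G)$.

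The first step is to set up the identifications. By Proposition \ref{derived categoery of a ring}, for $\mathcal{T}=\mathsf{D}(R)$ we have $\mathcal{T}^{\mathsf{b}}=\mathsf{D}^{\mathsf{b}}(\Mod R)$ and $\mathcal{T}^{\mathsf{b}}_c=\mathsf{D}^{\mathsf{b}}(\smod R)$. Here $R$ is commutative noetherian, so it is a Noether algebra over $k=R$. Condition (\ref{paraskhnio2}) for $R$ then reads exactly as $\mathcal{T}^{\mathsf{b}}=\mathsf{broad}_{\Sigma}(\mathcal{T}^{\mathsf{b}}_c)$.

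The second step is to check the hypotheses of Proposition \ref{broad paraskhnio}. As explained at the start of Subsection \ref{Equivariant triangulated categories}, the monad $\mathsf{M}$ is separable, exact and smashing, and it restricts to every intrinsic subcategory; in particular $\mathsf{M}(\mathcal{T}^{\mathsf{b}}_c)\subseteq\mathcal{T}^{\mathsf{b}}_c$. There is one subtlety: the action of $G$ on $R$ need not be $R$-linear, so $\mathsf{D}(R)$ with this action is not obviously a $k$-linear situation over $k=R$.

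I would handle this by taking $k=R^G$ instead. Since $|G|$ is invertible, $R^G$ is noetherian and $R$ is a finitely generated $R^G$-module (Noether's theorem). With this choice, $\mathsf{D}(R)$ and $\mathsf{D}(R\#G)$ are $k$-linear, the twists $\rho_g$ are $k$-linear, and both $R$ and $R\#G$ are Noether $k$-algebras. Hence Proposition \ref{derived categoery of a ring} applies to both rings over this $k$. Moreover, finite generation over $R$ and over $R^G$ agree for $R$-modules, so the descriptions above are unchanged. I expect this verification of linearity and finiteness to be the main point requiring care.

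The final step is to transport the conclusion. Proposition \ref{broad paraskhnio} gives
\[
(\mathsf{M}\lMod_{\mathcal{T}})^{\mathsf{b}}=\mathsf{broad}_{\Sigma}\bigl((\mathsf{M}\lMod_{\mathcal{T}})^{\mathsf{b}}_c\bigr).
\]
Using the triangle equivalences $\mathsf{M}\lMod_{\mathcal{T}}\simeq\mathsf{D}(R)^G\simeq\mathsf{D}(R\#G)$, which are $k$-linear, and the fact that the intrinsic subcategories and $\mathsf{broad}_{\Sigma}$ are preserved by such equivalences, this becomes an equality in $\mathsf{D}(R\#G)$. Applying Proposition \ref{derived categoery of a ring} to $R\#G$ then yields $\mathsf{D}^{\mathsf{b}}(\Mod R\#G)=\mathsf{broad}_{\Sigma}(\mathsf{D}^{\mathsf{b}}(\smod R\#G))$, which is condition (\ref{paraskhnio2}) for $R\#G$.
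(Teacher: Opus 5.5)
Your proposal is correct and follows the paper's proof. You apply Proposition \ref{broad paraskhnio} to $\mathcal{T}=\mathsf{D}(R)$ with the equivariant monad, then transport along $\mathsf{D}(R)^G\simeq\mathsf{D}(R\#G)$ and use Proposition \ref{derived categoery of a ring} to identify $(\mathcal{T}^G)^{\mathsf{b}}_c$ with $\mathsf{D}^{\mathsf{b}}(\smod R\#G)$. Your choice $k=R^G$ makes precise a step the paper glosses over: the paper asserts that $R\#G$ is a Noether $R$-algebra, although $R$ is not central in $R\#G$ and the monad $\mathsf{M}=\oplus_g\rho_g$ is $R^G$-linear but not $R$-linear. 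One small caveat: for an arbitrary commutative noetherian $R$, finiteness of $R$ over $R^G$ comes from the Reynolds/trace-form argument, not the classical Noether theorem for finitely generated algebras over a field.
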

\begin{proof}
    It follows from Proposition \ref{broad paraskhnio} that, for $\mathcal{T}=\mathsf{D}(R)$, we have $(\mathcal{T}^{\mathsf{G}})^{\mathsf{b}}=\mathsf{broad}_{\Sigma}((\mathcal{T}^{G})^{\mathsf{b}}_c)$. Since $G$ is finite, $R\#G$ is a Noether $R$-algebra and therefore $(\mathcal{T}^G)^{\mathsf{b}}_c=\mathsf{D}^{\mathsf{b}}(\smod R\#G)$, which completes the claim. 
\end{proof}

We know from \cite[Subsection 3.4]{regular} that condition (\ref{paraskhnio2}) holds for every commutative noetherian quasiexcellent ring of finite Krull dimension. As a consequence, the same condition holds for a big class of skew group algebras over such rings, which are typically noncommutative, providing new examples. In particular, for such rings, the computation of the intrinsic subcategories for the homotopy category of injectives, as in \cite[Proposition 3.21]{regular}, hold.

\subsubsection{Example 2: The derived category of a scheme} \label{action on scheme} Let $X$ be a proper scheme over a commutative noetherian ring $k$ and consider $\mathcal{T}=\mathsf{D}_{\mathsf{qc}}(X)$ the derived category of $\mathcal{O}_X$-modules with quasi-coherent cohomology. We then know that $\mathcal{T}$ is compactly generated and $\mathcal{T}^{\mathsf{c}}=\mathsf{D}^{\mathsf{perf}}(X)$, see \cite[Theorem 3.1.1]{bondal_van-den-bergh}. Moreover, it follows by \cite[Theorem 8.20]{neeman7} (see in particular \cite[Example 0.7]{neeman7}) that $\mathcal{T}^{\mathsf{b}}_c=\mathsf{D}^{\mathsf{b}}_{\mathsf{coh}}(X)$. It is known that $X$ is regular if and only if $\mathsf{D}^{\mathsf{perf}}(X)=\mathsf{D}^{\mathsf{b}}_{\mathsf{coh}}(X)$. In our language, the latter translates to the following. 
\begin{cor}
    A scheme $X$, with the above assumptions, is regular if and only if $\mathsf{D}(\mathsf{Qcoh}X)$ is regular. 
\end{cor}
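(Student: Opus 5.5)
The argument is a direct translation of Definition~\ref{notions of regularity}(iii) into scheme-theoretic terms, using the identifications just recalled. Set $\mathcal{T}=\mathsf{D}_{\mathsf{qc}}(X)$; for $X$ proper over $k$ this is the same as $\mathsf{D}(\mathsf{Qcoh}X)$. The plan has three steps.

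First, by Definition~\ref{notions of regularity}(iii), $\mathcal{T}$ is regular precisely when $\mathcal{T}^{\mathsf{b}}_c=\mathcal{T}^{\mathsf{c}}$. The first thing to check is that $\mathcal{T}$ is $k$-linear, so that $\mathcal{T}^{\mathsf{b}}_c$ is defined. This is clear, since $X$ is a $k$-scheme and Hom-groups in $\mathsf{D}_{\mathsf{qc}}(X)$ are modules over $\Gamma(X,\mathcal{O}_X)$, which is a $k$-algebra.

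Second, we substitute the two computations quoted above:
\begin{itemize}
\item $\mathcal{T}^{\mathsf{c}}=\mathsf{D}^{\mathsf{perf}}(X)$, by \cite[Theorem 3.1.1]{bondal_van-den-bergh};
\item $\mathcal{T}^{\mathsf{b}}_c=\mathsf{D}^{\mathsf{b}}_{\mathsf{coh}}(X)$, by \cite[Theorem 8.20, Example 0.7]{neeman7}. This is where properness over the noetherian ring $k$ is used.
\end{itemize}
Regularity of $\mathcal{T}$ therefore becomes the equality $\mathsf{D}^{\mathsf{perf}}(X)=\mathsf{D}^{\mathsf{b}}_{\mathsf{coh}}(X)$ of subcategories of $\mathsf{D}_{\mathsf{qc}}(X)$.

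Third, we invoke the classical fact that a noetherian scheme $X$ is regular if and only if $\mathsf{D}^{\mathsf{perf}}(X)=\mathsf{D}^{\mathsf{b}}_{\mathsf{coh}}(X)$. The reasoning is as follows. Both conditions are local. Perfect complexes are always bounded coherent on a noetherian scheme, and $\mathcal{O}_{X,x}$ is regular if and only if every finitely generated module over it has finite projective dimension, by Auslander--Buchsbaum--Serre. For the reverse implication, if every coherent sheaf is perfect, then the residue fields $k(x)$ have finite projective dimension over $\mathcal{O}_{X,x}$, which forces each local ring to be regular.

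The only step that needs real care is the second. One has to make sure that the hypotheses on $X$ match those needed for the identification $\mathcal{T}^{\mathsf{b}}_c=\mathsf{D}^{\mathsf{b}}_{\mathsf{coh}}(X)$, namely $X$ proper over a noetherian $k$ together with the finiteness required in the definition of ${}^{\hfpadova}$-type categories. Since this identification is quoted from \cite{neeman7}, the corollary itself is essentially formal once the identifications are in place.
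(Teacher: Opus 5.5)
Your proposal is correct and follows the paper's argument. The paper also identifies $\mathsf{D}_{\mathsf{qc}}(X)\simeq\mathsf{D}(\mathsf{Qcoh}X)$, citing \cite[Corollary 5.5]{bokstedt_neeman}, and then combines $\mathcal{T}^{\mathsf{c}}=\mathsf{D}^{\mathsf{perf}}(X)$ and $\mathcal{T}^{\mathsf{b}}_c=\mathsf{D}^{\mathsf{b}}_{\mathsf{coh}}(X)$ with the known characterisation of regularity via $\mathsf{D}^{\mathsf{perf}}(X)=\mathsf{D}^{\mathsf{b}}_{\mathsf{coh}}(X)$. One small fix to your sketch of that classical fact: for a non-closed point $x$, the residue field $k(x)$ is not a coherent sheaf on $X$, so use the coherent sheaf $\mathcal{O}_{\overline{\{x\}}}$, whose stalk at $x$ is $k(x)$.
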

\begin{proof}
     By \cite[Corollary 5.5]{bokstedt_neeman} there is an equivalence $\mathsf{D}_{\mathsf{qc}}(X)\simeq \mathsf{D}(\mathsf{Qcoh}X)$, so the claim follows from the above discussion. 
\end{proof}
Note that, by \cite[Proposition~0FDB]{StacksProject}, the equivalence in the proof above restricts to an equivalence $\mathsf{D}_{\mathsf{coh}}^{\mathsf{b}}(X)\simeq \mathsf{D}^{\mathsf{b}}(\mathsf{coh}(X))$, implying that for $\mathcal{T}=\mathsf{D}(\mathsf{Qcoh}(X))$ we have that $\mathcal{T}^{\mathsf{b}}_c=\mathsf{D}^{\mathsf{b}}(\mathsf{coh}(X))$. 

Assume now an action of a finite group $G$ on $X$. The quotient scheme $X/G$ exists if and only if each orbit is contained in some affine open, see \cite[Expos\'e V, Proposition 1.8]{SGA1}. The action on $X$ induces a canonical action on $\mathsf{Qcoh}(X)$ and $\mathsf{coh}(X)$ by considering the pushforward induced by each $\rho_g\colon X\rightarrow X$. Thus, we can consider the equivariant categories $\mathsf{Qcoh}^G(X)\coloneqq \mathsf{Qcoh}(X)^G$ and $\mathsf{coh}^G(X)\coloneqq\mathsf{coh}(X)^G$. If the action is assumed, moreover, to be free, then we have equivalences $\mathsf{Qcoh}^G(X) \simeq \mathsf{Qcoh}(X/G)$ and $\mathsf{coh}^G(X)\simeq \mathsf{coh}(X/G)$. The $G$-action on these categories extends naturally to a $G$-action on $\mathsf{D}(\mathsf{Qcoh}(X))$ and $\mathsf{D^b}(\mathsf{coh}(X))$, for which we have canonical equivalences when $|G|$ is invertible in $k$:
\[
    \mathsf{D}(\mathsf{Qcoh}(X))^G \simeq \mathsf{D}(\mathsf{Qcoh}^G(X)) \simeq \mathsf{D}(\mathsf{Qcoh}(X/G)) \ \ \ \& \ \ \ \mathsf{D^b}(\mathsf{coh}(X))^G \simeq \mathsf{D^b}(\mathsf{coh}^G(X)) \simeq \mathsf{D^b}(\mathsf{coh}(X/G)).
\]
The first equivalences in each pair follows from Theorem~\ref{equivariant derived}, since both $\mathsf{D}(\mathsf{Qcoh}(X))$ and $\mathsf{D^b}(\mathsf{coh}(X))$ are idempotent complete. Indeed, $\mathsf{Qcoh}(X)$ is Grothendieck, by \cite[Proposition~077P]{StacksProject}), hence it has coproducts, thus $\mathsf{Qcoh}^G(X)$ also has coproducts and thus $\mathsf{D}(\mathsf{Qcoh}^G(X))$ is idempotent complete.

\begin{cor} \label{regularity for schemes}
    Let $X$ be a proper scheme over a commutative noetherian ring $k$. Assume a free action of a finite group $G$ on $X$ such that $|G|$ is invertible in $k$.
    If the quotient scheme $X/G$ exists, then $X$ is regular if and only if $X/G$ is regular. In particular, this always holds when $X$ is quasi-projective variety.
\end{cor}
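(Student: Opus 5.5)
The plan is to reduce the statement to Corollary \ref{regularity for equivariant}, applied to the compactly generated category $\mathcal{T}=\mathsf{D}(\mathsf{Qcoh}(X))$ together with its induced $G$-action. The argument proceeds in three steps.

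\textbf{Step 1: regularity of $X$.} By the corollary just proved, $X$ is regular if and only if $\mathcal{T}$ is regular in the sense of Definition \ref{notions of regularity}(iii). By the remark following it, this means $\mathsf{D}^{\mathsf{perf}}(X)=\mathcal{T}^{\mathsf{c}}$ coincides with $\mathcal{T}^{\mathsf{b}}_c=\mathsf{D}^{\mathsf{b}}(\mathsf{coh}(X))$. The $G$-action on $X$ induces an action on $\mathcal{T}$ by exact autoequivalences (pushforward along each $\rho_g$), and $|G|$ is invertible in the $k$-linear category $\mathcal{T}$. Corollary \ref{regularity for equivariant}(iii) therefore gives: $\mathcal{T}$ is regular if and only if $\mathcal{T}^G$ is regular, where we use Convention \ref{convention} for the triangulated structure.

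\textbf{Step 2: identifying $\mathcal{T}^G$.} Since the action is free and $X/G$ exists, the displayed equivalences give
\[
\mathcal{T}^G\simeq \mathsf{D}(\mathsf{Qcoh}^G(X))\simeq \mathsf{D}(\mathsf{Qcoh}(X/G)).
\]
Regularity is intrinsic, being defined through compact objects and $(\mathcal{T}^{\mathsf{c}})^{\hfpadova}$. It is therefore invariant under $k$-linear triangle equivalences, provided the composite equivalence is exact. The first equivalence is exact by Theorem \ref{equivariant derived}. The second is induced by an exact equivalence of abelian categories, and the canonical pre-triangulated structure on $\mathcal{T}^G$ is unique by Lemma \ref{pretriangulated without idempotent}. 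Hence $\mathcal{T}^G$ is regular if and only if $\mathsf{D}(\mathsf{Qcoh}(X/G))$ is regular.

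\textbf{Step 3: regularity of $X/G$.} To translate the last condition back into regularity of $X/G$, I would apply the previous corollary to $X/G$. This requires $X/G$ to be proper over $k$. Since $X\to X/G$ is finite and surjective and $X$ is proper, $X/G$ is separated and universally closed. It is of finite type because it is the quotient by a finite group of a finite type scheme over a noetherian base (SGA1, Exp.\ V). Hence $X/G$ is proper, and chaining the equivalences of Steps 1 and 2 proves the claim.

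\textbf{The quasi-projective case.} This is where the statement needs interpreting. For a quasi-projective variety, every finite set of points, in particular every orbit, lies in an affine open, so $X/G$ exists by the SGA1 criterion cited above. I would read "this always holds" as saying that the existence hypothesis is automatic, with properness still assumed.

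The main obstacle I expect is the bookkeeping in Step 2: checking that the chain of equivalences is compatible with the canonical triangulated structure, and verifying that $X/G$ again satisfies the standing hypotheses. The homological content itself is fully supplied by Corollary \ref{regularity for equivariant}.
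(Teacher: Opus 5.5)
Your proof is correct and follows the paper's route: identify $\mathsf{D}_{\mathsf{qc}}(X/G)\simeq\mathsf{D}(\mathsf{Qcoh}(X/G))\simeq\mathsf{D}(\mathsf{Qcoh}^G(X))\simeq\mathcal{T}^G$, use properness of $X/G$ to recognise its compact and bounded finite objects, and apply Corollary \ref{regularity for equivariant}. You also justify why $X/G$ is proper, which the paper only asserts, and your reading of the final clause is the right one: the paper's proof says a quasi-projective variety is proper, which is false in general (e.g.\ $\mathbb{A}^1$), so quasi-projectivity should only be used to guarantee that $X/G$ exists, with properness kept as a hypothesis.
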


\begin{proof}
We write $\mathcal{T}$ for $ \mathsf{D_{qc}}(X)$ and $\mathcal{D}$ for $ \mathsf{D_{qc}}(X/G)$. Note that $X$ being proper implies that $X/G$ is also proper (in particular noetherian), thus we have the following canonical equivalences
$$\mathcal{D} \simeq  \mathsf{D}(\mathsf{Qcoh}(X/G)) \simeq  \mathsf{D}(\mathsf{Qcoh}^G(X)) \simeq  \mathsf{D}(\mathsf{Qcoh}({X}))^G \simeq \mathcal{T}^G. $$ 
Since $X/G$ is proper, it follows that $\mathcal{D}^{\mathsf{c}}=\mathsf{D}^{\mathsf{perf}}(X/G)$ and $\mathcal{D}^{\mathsf{b}}_c=\mathsf{D}^{\mathsf{b}}(\mathsf{coh}(X/G))$. We can, therefore, use Corollary \ref{subcategories for equivariant} and Corollary \ref{regularity for equivariant} to conclude the first claim. In particular, when $X$ is a quasi-projective variety, then it is proper and the quotient scheme always exists \cite[Chapter II, Paragraph 7, Remark]{Mumford}. 
\end{proof}

\subsubsection{Example 3: The derived category of a dg algebra} \label{the derived category of a dg algebra} Let $A$ be a dg category over a field $k$ and $G$ a finite group which acts strongly on $A$, i.e.\ via dg automorphisms. From now on, whenever we say that a group acts on a dg category, we mean a strong action. In this case the equivariant category $A^G$ is also a dg category, see \cite[Proposition 8.3]{elagin}. We recall the following relevant construction following \cite[Subsection~4.8]{heisenberg} (see also \cite[Subsection~3.1]{MerlinChrist}). 

\begin{defn}
    In the above setup, the \emph{skew group dg category}, denoted by $A\#G$, is defined as follows.    \begin{itemize}
        \item[$\bullet$] $\mathsf{Ob}A\#G=\mathsf{Ob}A$. 
        \item[$\bullet$] For any objects $a,a'$ of $A\#G$, 
        \[
        \mathsf{Hom}_{A\#G}^i(a,a')\coloneqq \{(f,g) \ | \ f\in\mathsf{Hom}_{A}^i(ga,a'), g\in G\}
        \]
        and moreover $\mathsf{deg}(f,g)\coloneqq \mathsf{deg}(f)$ and $\mathsf{d}(f,g)\coloneqq (\mathsf{d}f,g)$. 
        \item The composition in $A\#G$ is defined by 
        \[
        (f,g)\circ (f',g')\coloneqq (f\circ gf',gg'). 
        \]
        \item For any $a\in A\#G$, the identity morphism is $(\mathsf{Id}_a,1)$. 
    \end{itemize}
\end{defn} 

The following is a consequence of the definition. 

\begin{lem} \label{connective and proper}
    Let $A$ be a dg algebra and $G$ a finite group acting on $A$. The following hold. 
    \begin{itemize}
        \item[\textnormal{(i)}] $A\#G$ is a dg algebra. 
        \item[\textnormal{(ii)}] If $A$ is proper \textnormal{(}i.e.\ $\oplus_{i\in\mathbb{Z}}H^i(A)\in\smod k$\textnormal{)}, then $A\#G$ is also proper. Similarly, if $A$ is connective \textnormal{(}i.e.\ $H^i(A)=0$ for $i>0$\textnormal{)}, then $A\#G$ is connective.
    \end{itemize}
\end{lem}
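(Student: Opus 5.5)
The plan is to read everything off the definition of $A\#G$ with $A$ regarded as a dg category with a single object $\ast$. Since $A\#G$ has the same objects as $A$, it too has one object, and its endomorphism complex is
\[
A\#G=\mathsf{Hom}_{A\#G}(\ast,\ast)=\bigoplus_{g\in G}\mathsf{Hom}_A(g\ast,\ast)\cong\bigoplus_{g\in G}A,
\]
where the action is strong, so $g\ast=\ast$. The elements are the pairs $(a,g)$, and we have $\mathsf{deg}(a,g)=\mathsf{deg}(a)$ and $\mathsf{d}(a,g)=(\mathsf{d}a,g)$. For (i), a one-object dg category is exactly a dg algebra. Still, I would check directly that $A\#G$ is one. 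The product $(a,g)(a',g')=(a\cdot g(a'),gg')$ is associative because each $g$ acts by algebra automorphisms and the action is a group action. The unit is $(1,1)$. The Leibniz rule follows from the Leibniz rule in $A$ together with the fact that $g$ is a dg map, so $g$ commutes with $\mathsf{d}$ and preserves degree. These facts are all implicit in the claim that $A\#G$ is a dg category, which the paper takes from the cited references. So (i) is essentially immediate.

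For (ii), the key observation is that, as a complex of $k$-vector spaces, the differential on $A\#G$ acts componentwise on the $g$-summands. Hence $A\#G\cong A^{\oplus|G|}=A\otimes_k kG$ as complexes, and cohomology commutes with finite direct sums, giving
\[
H^i(A\#G)\cong H^i(A)^{\oplus|G|}\quad\text{for all } i\in\mathbb{Z}.
\]
Both claims then follow at once. If $\bigoplus_i H^i(A)$ is finite dimensional, then $\bigoplus_i H^i(A\#G)\cong\bigl(\bigoplus_i H^i(A)\bigr)^{\oplus|G|}$ is finite dimensional because $G$ is finite. If $H^i(A)=0$ for $i>0$, then $H^i(A\#G)=0$ for $i>0$.

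There is no real obstacle here. The only point needing care is to confirm that the differential genuinely does not mix the $g$-components, which is exactly the definition $\mathsf{d}(f,g)=(\mathsf{d}f,g)$. This is the step where finiteness of $G$ is used, to keep the direct sum finite and hence $\smod k$-valued.
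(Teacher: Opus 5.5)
Your proposal is correct and takes the same route as the paper, which states the lemma as a direct consequence of the definition: $A\#G$ has one object with endomorphism complex $\bigoplus_{g\in G}A$ and componentwise differential, so $H^i(A\#G)\cong H^i(A)^{\oplus|G|}$, and properness and connectivity both transfer because $G$ is finite. Your checks of associativity and the Leibniz rule, using that each $g$ acts by dg algebra automorphisms, supply exactly the details the paper leaves implicit.
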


An action of $G$ on $A$ as above induces an action of $G$ on $\mathsf{D}(A)$, which in fact has the following property (we refer to \cite{keller0} for the definition of the derived category of a dg category). 

\begin{prop} \label{derived category of dg skew}
    Let $A$ be a dg category over a field $k$ and $G$ a finite group such that $|G|$ is invertible in $k$. Assume an action of $G$ on $A$. Then there is an equivalence of triangulated categories 
    \[
    \mathsf{D}(A\#G)\simeq \mathsf{D}(A)^G.
    \]
\end{prop}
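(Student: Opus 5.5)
The plan is to realise $\mathsf{D}(A)^G$ as the derived category of $A^G$, the dg equivariant category of \cite[Proposition 8.3]{elagin}, and then to identify $A^G$ with $A\#G$ up to Morita equivalence.

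\textbf{Step 1: $\mathsf{D}(A)^G$ is compactly generated.} By Proposition \ref{compactly generated equivariant}, since $\mathsf{D}(A)$ is compactly generated, $\mathsf{D}(A)^G$ is compactly generated. Its compact objects are $(\mathsf{D}(A)^{\mathsf{c}})^G$. Applying Theorem \ref{balmer2} to the monad $\mathsf{M}=\mathsf{F}\circ\mathsf{Ind}$, the object
\[
P\coloneqq \mathsf{Ind}\big(\textstyle\bigoplus_{a} a^\wedge\big)
\]
is a compact generator, where the sum runs over the representables $a^\wedge$. In the dg algebra case this is simply $P=\mathsf{Ind}(A)$, with underlying object $\bigoplus_{g\in G}\rho_g(A)$.

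\textbf{Step 2: compute the endomorphisms of the generator.} For each object $a$, the adjunction $(\mathsf{Ind},\mathsf{F})$ together with $\mathsf{F}\mathsf{Ind}=\bigoplus_g\rho_g$ gives
\[
\mathsf{Hom}_{\mathsf{D}(A)^G}\big(\mathsf{Ind}(a^\wedge),\mathsf{Ind}(a'^\wedge)[n]\big)\cong \bigoplus_{g\in G}\mathsf{Hom}_{\mathsf{D}(A)}\big(a^\wedge,(ga')^\wedge[n]\big)\cong \bigoplus_{g} H^n\mathsf{Hom}_A(a,ga').
\]
This is exactly $H^n\mathsf{Hom}_{A\#G}$, possibly after reindexing $g\mapsto g^{-1}$ so that it matches the convention $\mathsf{Hom}_A(ga,a')$. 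I would check that composition matches the skew composition $(f,g)\circ(f',g')=(f\circ gf',gg')$.

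\textbf{Step 3: lift to the dg level.} To apply Keller's theorem, the computation of Step 2 has to be lifted to a quasi-equivalence between $A\#G$ and the full dg subcategory of a dg enhancement of $\mathsf{D}(A)^G$ on the objects $\mathsf{Ind}(a^\wedge)$. My choice of enhancement is the dg category $(\mathsf{dgMod}\text{-}A)^G$ of equivariant dg modules, restricted to h-projective objects. Because the action is strong, $\mathsf{Ind}$ is defined strictly there, and the isomorphisms of Step 2 hold on the nose at the level of Hom complexes. Keller's theorem then gives $\mathsf{D}(A\#G)\simeq \mathsf{D}(A)^G$.

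\textbf{Main obstacle.} The hard part is knowing that this enhancement actually computes $\mathsf{D}(A)^G$, that is, that $\mathsf{D}((\mathsf{dgMod}\text{-}A)^G)\simeq \mathsf{D}(A)^G$. I would handle this as in Theorem \ref{equivariant derived}, using that $|G|$ is invertible in $k$. First, the strict module category $(\mathsf{Mod}\,A)^G$ is identified with modules over $A\#G$ directly from the definitions. Then the canonical comparison functor $\mathsf{D}(A\#G)\to\mathsf{D}(A)^G$ has to be shown to be an equivalence, and I would prove this in three parts:
\begin{itemize}
\item[(a)] Fully faithfulness on the generators $\mathsf{Ind}(a^\wedge)$, which is Step 2.
\item[(b)] Full faithfulness extends from the generators to the localising subcategory they generate, because all the functors involved preserve coproducts.
\item[(c)] Essential surjectivity up to retracts: every object is a summand of $\mathsf{Ind}$ of something, by Lemma \ref{summand}. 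Since $\mathsf{D}(A\#G)$ is idempotent complete, this gives an honest equivalence.
\end{itemize}
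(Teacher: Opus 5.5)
Your argument is correct in substance, but it follows a different route from the paper. The paper uses neither compact generators nor Keller's theorem. From the dg-level equivalence $\dgMod A\#G\simeq(\dgMod A)^G$ it extracts only the ambidextrous adjunction $(\phi_*,\phi^*,\phi_*)$ along $\phi\colon A\to A\#G$, with $\phi^*\phi_*\cong\bigoplus_{g\in G}(\rho_g)_*$ and $\epsilon\eta=|G|$. Both functors preserve acyclic modules, so this descends to derived categories. The split unit makes $\phi^*\colon\mathsf{D}(A\#G)\to\mathsf{D}(A)$ separable. Then \cite[Proposition 3.5]{xwchen}, together with idempotent completeness of $\mathsf{D}(A\#G)$, gives $\mathsf{D}(A\#G)\simeq\mathsf{M}\lMod_{\mathsf{D}(A)}$ for $\mathsf{M}=\phi^*\phi_*$, and this monad is then recognised as the one whose modules form $\mathsf{D}(A)^G$.

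Your steps (a)--(c) instead construct the comparison functor $\Psi$ directly into $\mathsf{D}(A)^G$. You prove it is an equivalence by d\'evissage on the compact generators $\Psi(a^\wedge_{A\#G})\cong\mathsf{Ind}(a^\wedge)$. What you gain is an explicit functor into the equivariant category itself, with the linearizations carried along by construction. You also get a Morita-theoretic explanation of $A\#G$ as the endomorphism dg category of the induced representables. What you pay is the compact-generation input (Theorem~\ref{balmer2}, Proposition~\ref{compactly generated equivariant} and a d\'evissage), which the paper's purely formal separability argument avoids.

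Some points of execution:

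\textbf{Step~3.} Drop it. Its premise, that $(\dgMod A)^G$ enhances $\mathsf{D}(A)^G$, is the proposition itself, and (a)--(c) already prove the statement without Keller's theorem.

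\textbf{Step (a).} Abstractly isomorphic Hom spaces are not enough. You must check that the map induced by $\Psi$ is the isomorphism of Step~2. This follows from $\mathsf{F}\Psi=\phi^*$ and $\Psi\phi_*\cong\mathsf{Ind}$ with compatible units. That check in fact shows $\Psi$ is bijective on $\mathsf{Hom}(\phi_*x,y)$ for all $x,y$. Every object $z$ of $\mathsf{D}(A\#G)$ is a summand of $\phi_*\phi^*z$ (Lemma~\ref{summand}, using that $\phi^*$ is faithful because $\epsilon\eta=|G|$). So step (b) becomes unnecessary, and at that point your proof is essentially the paper's.

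\textbf{Step (c).} You need $\mathsf{Ind}(x)$ in the essential image for every $x\in\mathsf{D}(A)$, not only for representables. Alternatively, observe that the essential image of the fully faithful, coproduct-preserving functor $\Psi$ is a localizing subcategory containing a set of compact generators of $\mathsf{D}(A)^G$.

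\textbf{Step~1.} $\bigoplus_a a^\wedge$ is compact only when $A$ has finitely many objects. Work with the set $\{\mathsf{Ind}(a^\wedge)\}_a$ rather than a single generator $P$.
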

\begin{proof}
We consider the morphism of dg categories $\phi\colon A\rightarrow A\#G$ given by the identity on objects and for every $a,a'\in A$, $\mathsf{Hom}_A^i(a,a')\rightarrow \mathsf{Hom}_{A\#G}^i(a,a')$ maps $f$ to $(f,1_G)$. This gives rise to a restriction functor $\phi^*\colon \dgMod A\#G\rightarrow \dgMod A$ on the level of dg module categories. The action of $G$ on $A$ induces an action of $G$ on $\dgMod A$ by considering the induction functors $(\rho_g)_*\colon \dgMod A\rightarrow \dgMod A$. We know from \cite[Lemma~4.41]{heisenberg} that there is an equivalence of dg categories
\begin{equation} \label{equivalence from heisenberg}
    \dgMod A\#G\simeq (\dgMod A)^G
\end{equation}
and under this equivalence $\phi^*$ commutes with the forgetful functor $(\dgMod A)^G\rightarrow \dgMod A$ (note that in \emph{loc.\! cit.\ }this is proved for a right action). As a consequence, the functor $\phi^*$ is a part of an ambidextrous adjunction $(\phi_*,\phi^*,\phi_*)$ and it holds that 
\begin{equation} \label{M2}
    \phi_*\phi^*=\oplus_{g\in G}(\rho_g)_*.
\end{equation}
Moreover, there exist natural transformations 
\[
\eta\colon \mathsf{Id}_{\dgMod A\#G}\rightarrow \phi_*\phi^* \ \ \ \ \ \& \ \ \ \ \  \epsilon \colon\phi_*\phi^*\rightarrow \mathsf{Id}_{\dgMod A\#G}
\]
where $\eta$ is the unit of the adjunction $(\phi^*,\phi_*)$ and $\epsilon$ is the counit of the adjunction $(\phi_*,\phi^*)$. The latter satisfy $\epsilon \eta=|G|$. This adjunction gives rise to an adjunction on the level of homotopy categories, on the nose, and it is directly verified that $\phi^*$ and $\phi_*$ preserve acyclic modules. We can therefore deduce the existence of an adjunction of derived categories as follows. 
\[
\begin{tikzcd}
\mathsf{D}(A\#G) \arrow[rr,  "\phi^*_{\phantom{*}}" description] &  & \mathsf{D}(A) \arrow[ll, "\ \ \phi_*^{\phantom{*}}"', bend right] \arrow[ll, "\ \  \phi_*^{\phantom{*}}", bend left]
\end{tikzcd}
\]
The unit and counit of the adjunction above, which we again denote by $\eta$ and $\epsilon$ respectively, satisfy the same relation as before. In particular, the unit $\eta$ has a left inverse (namely $\frac{\epsilon}{|G|}$). It follows, see for instance \cite[Lemma~2.2]{xwchen}, that the functor $\phi^*$ is separable, and therefore there is an exact equivalence
\[
\mathsf{D}(A\#G)\simeq \mathsf{M}\lMod{_{\mathsf{D}(A)}}
\]
where $\mathsf{M}$ is the monad $\phi^*\phi_*$. The action of $G$ on $\dgMod A$ induces an action of $G$ on $\mathsf{D}(A)$. Since $\phi_*\phi^*=\oplus_{g\in G}(\rho_g)_*$ on the level of dg module categories, by (\ref{M2}), it follows that $\mathsf{M}$ coincides with $\oplus_{g\in G}(\rho_g)_*$ on the level of the derived category $\mathsf{D}(A)$. Since the latter is precisely the monad induced by the action of $G$ on $\mathsf{D}(A)$, the proof is complete.
\end{proof}

For any dg algebra $A$ over a field $k$ the subcategory of $\mathsf{D}(A)$ that consists of the finite dimensional modules is $\mathsf{D}_{\mathsf{fd}}(A)\coloneqq \{x\in\mathsf{D}(A): \oplus_{n\in\mathbb{Z}} H^n(x)\in\smod k\}$.
This clearly coincides with  $\mathcal{T}^{\mathsf{b}}_c$ (for $\mathcal{T}=\mathsf{D}(A)$), since $A$ is a compact generator of $\mathsf{D}(A)$. Recall also that $A$ is called \emph{smooth} if $A\in\mathsf{per}(A\otimes_kA^{\mathrm{op}})$. Moreover, according to \cite{jin}, we say that $A$ is Gorenstein if $\mathsf{thick}(A)=\mathsf{thick}(A^*)$, where $A^*=\mathbb{R}\mathsf{Hom}_k(A,k)$.

\begin{prop} \label{smoothness vs finite global dim}
    Let $A$ be a proper connective dg algebra over a field $k$. The following hold. 
    \begin{itemize}
        \item[\textnormal{(i)}] If $k$ is perfect, then $A$ is smooth if and only if $\mathsf{D}(A)$ is regular if and only if $\mathsf{D}(A)$ has finite global dimension. 
        \item[\textnormal{(ii)}] $A$ is Gorenstein if and only if $\mathsf{D}(A)$ is Gorenstein. 
    \end{itemize}
\end{prop}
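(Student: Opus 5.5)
The plan is to identify the four intrinsic subcategories of $\mathcal{T}=\mathsf{D}(A)$ concretely, and then read off each homological notion from these identifications. Since $A$ is a compact generator, $\mathcal{T}^{\mathsf{c}}=\mathsf{per}(A)=\mathsf{thick}(A)$. By Lemma \ref{perp of thick}, $\mathcal{T}^{\mathsf{b}}=\{A\}^{\padova}$ consists of the $x$ with $H^n(x)\neq 0$ for only finitely many $n$. We already observed that $\mathcal{T}^{\mathsf{b}}_c=\mathsf{D}_{\mathsf{fd}}(A)$.

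Properness and connectivity are used as follows.

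\emph{Connectivity.} There is a standard t-structure on $\mathsf{D}(A)$ whose heart is $\Mod H^0(A)$. Since $A$ is proper, $H^0(A)$ is a finite dimensional algebra.

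\emph{Properness.} This gives $\mathsf{per}(A)\subseteq\mathsf{D}_{\mathsf{fd}}(A)$. It also puts $A^*=\mathbb{R}\mathsf{Hom}_k(A,k)$ in $\mathsf{D}_{\mathsf{fd}}(A)$.

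I would then use the known result (Keller--Nicol\'as / Jin) that for a proper connective $A$, $\mathsf{D}_{\mathsf{fd}}(A)=\mathsf{thick}(S)$, where $S$ is the direct sum of the simple $H^0(A)$-modules. The same is true of $\mathsf{thick}(A^*)$ on the injective side. With these, I compute
\[
\mathcal{T}^{\mathsf{b}}_p={}^{\padova}(\mathcal{T}^{\mathsf{b}}), \qquad \mathcal{T}^{\mathsf{b}}_i=(\mathcal{T}^{\mathsf{b}})^{\padova}.
\]
The expected outcome is that $\mathcal{T}^{\mathsf{b}}_p$ is the closure of $\mathsf{per}(A)$ under set-indexed coproducts and thick closure, i.e. $\mathsf{broad}_\Sigma(A)$ (objects "of finite projective dimension"), and dually $\mathcal{T}^{\mathsf{b}}_i=\mathsf{broad}$-type closure of $A^*$ under products. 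The route is the duality $\mathsf{Hom}(x,A^*[n])\cong D H^{-n}(x)$. This shows $A^*\in\mathcal{T}^{\mathsf{b}}_i$ and $A\in \mathcal{T}^{\mathsf{b}}_p$, and it reduces the question of boundedness of Hom-groups against bounded objects to a question against $S$. Here I would invoke a dévissage: every bounded object is built, via the t-structure truncations and homotopy colimits, from shifts of $H^0(A)$-modules, whose composition series involve only $S$.

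For (ii), Gorenstein means $\mathcal{T}^{\mathsf{b}}_p=\mathcal{T}^{\mathsf{b}}_i$. If $\mathsf{thick}(A)=\mathsf{thick}(A^*)$, then the two closures coincide by the descriptions above. Conversely, if $\mathcal{T}^{\mathsf{b}}_p=\mathcal{T}^{\mathsf{b}}_i$, then $A^*\in\mathcal{T}^{\mathsf{b}}_p$ and $A\in\mathcal{T}^{\mathsf{b}}_i$. Both objects are compact in their respective sense: $A^*$ is finite dimensional, so I would argue it lies in $\mathsf{D}_{\mathsf{fd}}\cap\mathcal{T}^{\mathsf{b}}_p$. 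The aim is to show that $\mathsf{D}_{\mathsf{fd}}\cap\mathcal{T}^{\mathsf{b}}_p=\mathsf{per}(A)$, using that finitely many simples suffice and a compactness argument à la Rickard. Dually, $A\in\mathsf{thick}(A^*)$, which yields Jin's condition.

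For (i), regular means $\mathsf{per}(A)=\mathsf{D}_{\mathsf{fd}}(A)$, i.e. $S\in\mathsf{per}(A)$. For perfect $k$, the Keller/Raedschelders--Stevenson type result that a proper connective $A$ is smooth iff $S$ is perfect (which uses that $H^0(A)/\mathrm{rad}$ is separable) gives smooth $\Leftrightarrow$ regular. For regular $\Leftrightarrow$ finite global dimension: if $S$ is perfect then every bounded object lies in $\mathsf{broad}_\Sigma(S)\subseteq\mathsf{broad}_\Sigma(A)$, giving $\mathcal{T}^{\mathsf{b}}=\mathcal{T}^{\mathsf{b}}_p$. Conversely, finite global dimension gives $S\in\mathcal{T}^{\mathsf{b}}_p\cap\mathsf{D}_{\mathsf{fd}}=\mathsf{per}(A)$ by the same intersection claim.

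The main obstacle is therefore the identification $\mathsf{D}_{\mathsf{fd}}(A)\cap\mathcal{T}^{\mathsf{b}}_p=\mathsf{per}(A)$, together with the dévissage showing that bounded objects lie in $\mathsf{broad}_\Sigma(\mathsf{D}_{\mathsf{fd}}(A))$. For the former I would try to show that, for $x\in\mathcal{T}^{\mathsf{b}}_p$, the functor $\mathsf{Hom}(x,-)$ vanishes in almost all degrees on $S$. Via minimal semifree resolutions (available because $A$ is connective and proper), this forces a finite minimal resolution, hence perfectness.
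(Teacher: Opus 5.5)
\textbf{Comparison with the paper.} The paper gives no argument of its own here; it only cites \cite{regular} (Corollary 5.15 and Proposition 5.6). Your direct computation, via the t-structure with heart $\Mod H^0(A)$ and minimal resolutions, is therefore a genuinely different route, and much of it is sound:
\begin{itemize}
\item The inclusion $\mathcal{T}^{\mathsf{b}}_p\subseteq{}^{\padova}(A^*)=\mathcal{T}^{\mathsf{b}}$ follows from $\mathsf{Hom}(x,A^*[n])\cong DH^{-n}(x)$.
\item Resolving by projective covers of the top cohomology over $H^0(A)$ lowers the top degree at each step, and the generators in degree $-t$ are counted by $\mathsf{Hom}(x,S[t])$. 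This does prove $\mathsf{D}_{\mathsf{fd}}(A)\cap{}^{\padova}(S)=\mathsf{per}(A)$.
\item $k$-duality applied to $A^{\mathrm{op}}$ then gives $\mathsf{D}_{\mathsf{fd}}(A)\cap S^{\padova}=\mathsf{thick}(A^*)$.
\end{itemize}
With the smoothness criterion you quote, this yields (i) and the implication ``$\mathsf{D}(A)$ Gorenstein $\Rightarrow$ $A$ Gorenstein''. For these you only need the trivial inclusions $\mathcal{T}^{\mathsf{b}}_p\subseteq{}^{\padova}(S)$ and $\mathcal{T}^{\mathsf{b}}_i\subseteq S^{\padova}$, together with $\mathcal{T}^{\mathsf{b}}\subseteq\mathsf{broad}_\Sigma(S)$.

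\textbf{The d\'evissage needs repair.} For $\mathcal{T}^{\mathsf{b}}\subseteq\mathsf{broad}_\Sigma(S)$, use that $H^0(A)$ has finite Loewy length. Every module $M$ then has the finite filtration $M\supseteq MJ\supseteq\cdots\supseteq MJ^{\ell}=0$ with factors in $\Add(S)$. Composition series are not available here, since infinite-dimensional modules have no finite ones. Homotopy colimits would only place you in $\mathsf{Loc}(S)$, and conditions of the form ${}^{\padova}(-)$ do not pass through them.

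\textbf{The genuine gap} is the implication $\mathsf{thick}(A)=\mathsf{thick}(A^*)\Rightarrow\mathcal{T}^{\mathsf{b}}_p=\mathcal{T}^{\mathsf{b}}_i$. There are two problems.

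First, the descriptions $\mathcal{T}^{\mathsf{b}}_p=\mathsf{thick}(\Add A)$ and $\mathcal{T}^{\mathsf{b}}_i=\mathsf{thick}(\mathsf{Prod}\,A^*)$ involve objects with infinite-dimensional cohomology. There $k$-duality is unavailable, so ``dually'' does not suffice. You must run the projective-cover resolution for arbitrary bounded $x$, using that $H^0(A)$ is a perfect ring. Separately, you need coresolutions by injective envelopes, lifted to $\mathsf{Prod}(A^*)$ via $\mathsf{Hom}(y,A^*)\cong\mathsf{Hom}_{H^0(A)}(H^0(y),DH^0(A))$.

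Second, and this is the missing idea, even granting both descriptions, $\mathsf{thick}(A)=\mathsf{thick}(A^*)$ only yields $\mathsf{thick}(\Add A)=\mathsf{thick}(\Add A^*)$ and $\mathsf{thick}(\mathsf{Prod}\,A)=\mathsf{thick}(\mathsf{Prod}\,A^*)$. It never compares a coproduct closure with a product closure. You need $\mathsf{Prod}(A)\subseteq\Add(A)$ and $\Add(A^*)\subseteq\mathsf{Prod}(A^*)$, which is where properness enters again:
\begin{itemize}
\item By Chase's theorem, $H^0(A)^I$ is projective over the finite-dimensional algebra $H^0(A)$. Each $H^n(A)$ is finitely presented, so $H^n(A^I)\cong H^0(A)^I\otimes_{H^0(A)}H^n(A)$, and comparing with the lift in $\Add(A)$ of this projective module gives $A^I\in\Add(A)$.
\item Dually, $(DH^0(A))^{(I)}$ is injective over the noetherian ring $H^0(A)$, and the same comparison gives $(A^*)^{(I)}\in\mathsf{Prod}(A^*)$.
\end{itemize}
Without some such input, your step ``the two closures coincide'' does not follow.
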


\begin{proof}
    See \cite[Corollary 5.15]{regular} and  \cite[Proposition 5.6]{regular} respectively.
\end{proof}

\begin{cor} \label{homological invariants for skew group dg algebras}
    Let $A$ be a proper connective dg algebra over a field $k$. Assume an action of a finite group $G$ on $A$ such that $|G|$ is invertible in $k$. The following hold. 
    \begin{itemize}
        \item[\textnormal{(i)}] If $k$ is perfect, then $A$ is smooth if and only if $A\#G$ is smooth. 
        \item[\textnormal{(ii)}] $A$ is Gorenstein if and only if $A\#G$ is Gorenstein. 
    \end{itemize}
\end{cor}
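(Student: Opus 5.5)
The plan is to reduce both statements to Corollary \ref{regularity for equivariant} applied to $\mathcal{T}=\mathsf{D}(A)$, and then translate back through Proposition \ref{smoothness vs finite global dim}.

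First, by Proposition \ref{derived category of dg skew} there is an exact equivalence $\mathsf{D}(A\#G)\simeq\mathsf{D}(A)^G$. Here $\mathsf{D}(A)$ is compactly generated by $A$, and $|G|$ is invertible in $k$. The category $\mathsf{D}(A)^G$ is canonically triangulated, since the equivalence identifies it with the monad category $\mathsf{M}\lMod_{\mathsf{D}(A)}$ as triangulated categories. Corollary \ref{regularity for equivariant} therefore applies and gives:
\begin{itemize}
\item $\mathsf{D}(A)$ has finite global dimension if and only if $\mathsf{D}(A\#G)$ does;
\item $\mathsf{D}(A)$ is Gorenstein if and only if $\mathsf{D}(A\#G)$ is;
\item $\mathsf{D}(A)$ is regular if and only if $\mathsf{D}(A\#G)$ is.
\end{itemize}
Each of these notions is intrinsic, because it is defined via $\mathcal{T}^{\mathsf{c}}$, $\mathcal{T}^{\mathsf{b}}$ and the perpendicular constructions. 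Hence each is invariant under exact equivalences that preserve coproducts, which any exact equivalence does.

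Second, I transfer the hypotheses to $A\#G$. By Lemma \ref{connective and proper}, $A\#G$ is a dg algebra which is again proper and connective. So Proposition \ref{smoothness vs finite global dim} applies to both $A$ and $A\#G$.

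For (i), with $k$ perfect, we get the chain
\[
A \text{ smooth} \iff \mathsf{D}(A) \text{ regular} \iff \mathsf{D}(A\#G) \text{ regular} \iff A\#G \text{ smooth}.
\]
The finite global dimension version gives the same conclusion. For (ii), we get
\[
A \text{ Gorenstein} \iff \mathsf{D}(A) \text{ Gorenstein} \iff \mathsf{D}(A\#G) \text{ Gorenstein} \iff A\#G \text{ Gorenstein}.
\]

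The main point to check is that the equivalence of Proposition \ref{derived category of dg skew} is compatible with the $k$-linear structure used in the definition of regularity. I expect this to hold because all the functors $\phi^*$, $\phi_*$ and $(\rho_g)_*$ are $k$-linear, so the identification $\mathcal{T}^{\mathsf{b}}_c=\mathsf{D}_{\mathsf{fd}}$ transfers along it. Beyond that, the argument is a direct combination of the earlier results.
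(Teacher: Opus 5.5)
Your proposal is correct and follows essentially the same route as the paper. The paper likewise identifies $\mathsf{D}(A\#G)\simeq\mathsf{D}(A)^G$ via Proposition~\ref{derived category of dg skew} and applies Corollary~\ref{regularity for equivariant}. It transfers properness and connectivity to $A\#G$ by Lemma~\ref{connective and proper}, and then translates back through Proposition~\ref{smoothness vs finite global dim}.
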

\begin{proof}
This is a direct consequence of Corollary \ref{regularity for equivariant} together with Proposition \ref{smoothness vs finite global dim} and Lemma \ref{connective and proper}.
\end{proof}

\subsection{Separable extensions} \label{Separable extensions} Let $A$ be a flat separable $R$-algebra, where $R$ is a commutative ring. The subcategories of $\mathsf{D}(A)$ are known in full generality, so we cannot use Theorem \ref{main theorem} to produce anything useful. Nonetheless, Theorem \ref{comparing regularity} applied to the equivalence of Theorem \ref{etale Balmer} recovers some known results (see for instance \cite{bhatt, li}) in an interesting way. 

\begin{cor}
    Consider $A$ and $R$ as above. The following hold. 
    \begin{itemize}
        \item[\textnormal{(i)}] If $R$ has finite global dimension, then $A$ has finite global dimension. 
        \item[\textnormal{(ii)}] If $R$ is Gorenstein and $A$ is finitely generated \textnormal{(}as an $R$-module\textnormal{)}, then $A$ is Gorenstein. 
        \item[\textnormal{(iii)}] If $R$ is regular noetherian and $A$ is finitely generated \textnormal{(}as an $R$-module\textnormal{)}, then $A$ is regular. 
    \end{itemize}
\end{cor}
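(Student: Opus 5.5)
We will deduce all three statements from Theorem \ref{comparing regularity}, applied to $\mathcal{T}=\mathsf{D}(R)$ and the monad $\mathsf{M}=-\otimes_R A$. By Theorem \ref{etale Balmer} we have $\mathcal{D}\coloneqq\mathsf{M}\lMod_{\mathsf{D}(R)}\simeq \mathsf{D}(A)$ as triangulated categories. Since $A$ is flat, $-\otimes_RA$ is computed termwise and commutes with coproducts, so $\mathsf{M}$ is a separable exact smashing monad. By Proposition \ref{derived categoery of a ring}, the intrinsic notions for $\mathsf{D}(R)$ and $\mathsf{D}(A)$ reduce to the classical ones: finite global dimension, Gorensteinness, and, for noetherian rings, regularity.

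For (i) we must check $\mathsf{M}(\mathcal{T}^{\mathsf{b}})\subseteq\mathcal{T}^{\mathsf{b}}$. By flatness, $H^n(x\otimes_RA)=H^n(x)\otimes_RA$, so $\mathsf{M}$ preserves $\mathsf{D}^{\mathsf{b}}(R)$. Theorem \ref{comparing regularity}(i) then gives $\mathsf{D}(A)^{\mathsf{b}}=\mathsf{D}(A)^{\mathsf{b}}_i$, that is, $\mathsf{D}^{\mathsf{b}}(A)=\mathsf{K}^{\mathsf{b}}(\Inj A)$, which says exactly that $\gd A<\infty$.

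For (iii), $A$ is finitely generated and flat over the noetherian ring $R$, hence finitely generated projective. Thus $-\otimes_RA$ sends $\mathsf{K}^{\mathsf{b}}(\proj R)=\mathsf{D}(R)^{\mathsf{c}}$ into itself. We first take $R$ to be a Noether algebra over itself so that $\mathcal{T}^{\mathsf{b}}_c=\mathsf{D}^{\mathsf{b}}(\smod R)$. Theorem \ref{comparing regularity}(iii) then yields $\mathsf{D}(A)^{\mathsf{b}}_c=\mathsf{D}(A)^{\mathsf{c}}$. Since $A$ is a Noether $R$-algebra, this equality reads $\mathsf{D}^{\mathsf{b}}(\smod A)=\mathsf{K}^{\mathsf{b}}(\proj A)$, i.e.\ $A$ is regular. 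Here we tacitly use that $k=R$ is noetherian, which is part of the hypothesis.

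For (ii), the hypotheses $\mathsf{M}(\mathcal{T}^{\mathsf{b}})\subseteq\mathcal{T}^{\mathsf{b}}$ (shown in (i)) and $\mathsf{U}_{\mathsf{M}}(\mathcal{D}^{\mathsf{b}}_p)\subseteq\mathcal{T}^{\mathsf{b}}_p$ are needed. The latter translates to the statement that restriction sends $\mathsf{K}^{\mathsf{b}}(\Proj A)$ into $\mathsf{K}^{\mathsf{b}}(\Proj R)$. This holds because $A$ is projective as an $R$-module: a projective $A$-module is a summand of $A^{(I)}$, which is projective over $R$. One could alternatively use Remark \ref{comonad} with $\mathsf{N}=\mathsf{RHom}_R(A,-)$ preserving boundedness for $A$ finitely generated projective. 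Theorem \ref{comparing regularity}(ii) then gives $\mathsf{K}^{\mathsf{b}}(\Proj A)=\mathsf{K}^{\mathsf{b}}(\Inj A)$, i.e.\ $A$ is Gorenstein.

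The main obstacle is the precise identification of the hypothesis $\mathsf{U}_{\mathsf{M}}(\mathcal{D}^{\mathsf{b}}_p)\subseteq\mathcal{T}^{\mathsf{b}}_p$ under the equivalence of Theorem \ref{etale Balmer}. One must verify that $\mathsf{U}_{\mathsf{M}}$ corresponds to restriction of scalars along $R\to A$, so that the projectivity argument applies. The step where finite generation is used to obtain projectivity over $R$ (finitely presented and flat) also needs care.
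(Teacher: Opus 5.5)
Your proposal is correct and follows the paper's proof. Flatness gives $\mathsf{M}(\mathcal{T}^{\mathsf{b}})\subseteq\mathcal{T}^{\mathsf{b}}$ for (i); $A$ being finitely generated projective over $R$ gives $\mathsf{U}_{\mathsf{M}}(\mathsf{K}^{\mathsf{b}}(\Proj A))\subseteq\mathsf{K}^{\mathsf{b}}(\Proj R)$ for (ii) and $\mathsf{M}(\mathcal{T}^{\mathsf{c}})\subseteq\mathcal{T}^{\mathsf{c}}$ for (iii); then Theorem~\ref{comparing regularity} applies. As you flag, getting projectivity of $A$ in (ii) tacitly requires $R$ noetherian (or $A$ finitely presented), and the paper's proof also makes this assumption silently at that step.
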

\begin{proof}
    We know from Theorem \ref{etale Balmer} that there is an exact equivalence $\mathsf{D}(A)\simeq \mathsf{M}\lMod_{\mathsf{D}(R)}$ where the monad $\mathsf{M}$ is given by $-\otimes_RA$. Since $A$ is a flat $R$-module, the functor $\mathsf{M}$ restricts to a functor between complexes with bounded cohomology, which shows (i), using Theorem \ref{comparing regularity}. When $R$ is noetherian, then $A$ is projective as an $R$-module (being flat and finitely generated) and therefore the functor $\mathsf{U}_{\mathsf{M}}$ maps $\mathsf{K}^{\mathsf{b}}(\Proj A)$ to $\mathsf{K}^{\mathsf{b}}(\Proj R)$ and the functor $\mathsf{M}$ restricts to compact objects, which shows (ii) and (iii) respectively - again using Theorem \ref{comparing regularity}.
\end{proof}

Under an additional hypothesis, we can verify the assumption of Proposition \ref{comparing regularity 2} (and in particular conclude the converse to the implications of the above result).

\begin{lem} \label{split extension}
    Let $A$ be a separable flat $R$-algebra. If the ring homomorphism $R\to A$ admits a section, then every object $x$ of $\mathsf{D}(R)$ is a summand of $\mathsf{M}(x)=x\otimes_RA$. 
\end{lem}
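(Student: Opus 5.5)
The plan is to realize $x$ as a retract of $x\otimes_R A$ by tensoring the split inclusion $R\to A$ with $x$. Let $\iota\colon R\to A$ be the structure map. It is a ring homomorphism, and in particular a morphism of $R$-modules. Let $s\colon A\to R$ be a section with $s\circ\iota=\mathsf{Id}_R$.

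The first point to check is that $s$ can be taken $R$-linear. If the given section is a ring homomorphism $s\colon A\to R$ with $s\iota=\mathsf{Id}_R$, then for $r\in R$ and $a\in A$ we have
\[
s(\iota(r)a)=s\iota(r)\,s(a)=r\,s(a),
\]
so $s$ is a morphism of $R$-modules. It follows that $\iota$ is a split monomorphism in $\Mod R$ and $R$ is an $R$-module direct summand of $A$. If only an additive or $R$-linear retraction is intended, the same conclusion is immediate. I expect this to be the only point that needs any care.

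Next, apply the functor $\mathsf{M}=-\otimes_R A$ on $\mathsf{D}(R)$. It is well defined without deriving because $A$ is flat, which is the setting of Theorem \ref{etale Balmer}. Since $A$ is a commutative-base bimodule, $x\otimes_R -$ is additive in the second variable and is a triangulated functor $\mathsf{D}(R)\to\mathsf{D}(R)$ on $\mathsf{Mod}$-maps between flat modules. We therefore obtain morphisms
\[
x\cong x\otimes_R R\xrightarrow{x\otimes\iota} x\otimes_R A\xrightarrow{x\otimes s} x\otimes_R R\cong x
\]
whose composite is $x\otimes(s\iota)=\mathsf{Id}_x$. Both $R$ and $A$ are flat, so these maps are computed termwise on any complex representing $x$ and are compatible with quasi-isomorphisms. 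Functoriality of $x\otimes_R-$ in the second variable gives the identity on the nose.

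Hence $x$ is a retract of $\mathsf{M}(x)$ in $\mathsf{D}(R)$. Since $\mathsf{D}(R)$ is idempotent complete (it has coproducts), $x$ is a direct summand of $\mathsf{M}(x)=x\otimes_R A$, with complement the cone of $x\otimes\iota$, namely $x\otimes_R(A/R)$. Note also that the unit $\eta_x$ of the monad is exactly $x\otimes\iota$, so the statement amounts to saying that $\eta$ is a split monomorphism, naturally in $x$.
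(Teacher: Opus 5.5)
Your argument is correct, but it is not the paper's route. The paper never writes down the splitting explicitly. Its steps are:
\begin{itemize}
\item take $\mathsf{i}\colon\mathsf{D}(R)\to\mathsf{D}(A)$ to be restriction along the ring map $s\colon A\to R$;
\item use Theorem~\ref{etale Balmer} (hence separability) to identify $\mathsf{D}(A)$ with the Eilenberg--Moore category, and apply Lemma~\ref{summand} to see that $\mathsf{i}(x)$ is a summand of $\mathsf{lei}(x)$;
\item apply $\mathsf{e}$ and use $\mathsf{ei}(x)\cong x$, which is where $s\iota=\mathsf{Id}_R$ enters, to conclude that $x$ is a summand of $\mathsf{elei}(x)\cong\mathsf{M}(x)$.
\end{itemize}

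Unwinding this, the retraction produced there, $\mathsf{e}(\epsilon_{\mathsf{i}(x)})\colon x\otimes_RA\to x$, is exactly your $x\otimes s$. The triangle identity $\mathsf{e}(\epsilon_{\mathsf{i}(x)})\circ\eta_{\mathsf{ei}(x)}=\mathsf{Id}_x$ already exhibits your $\eta_x=x\otimes\iota$ as a section. So the appeal to faithfulness of $\mathsf{e}$, and hence to separability, is not actually needed for this lemma.

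Your version makes this transparent and is more general. It uses neither separability nor multiplicativity of $s$, only that $\iota$ splits $R$-linearly; flatness merely lets you avoid deriving. It also shows that the unit of $\mathsf{M}$ is naturally split. The paper's version buys uniformity with the rest of the text, where Lemma~\ref{summand} is the standard source of summands. The cost is that $s$ must be a ring map so that $\mathsf{i}$ exists.

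One minor point: you do not need idempotent completeness of $\mathsf{D}(R)$. In any triangulated category a split monomorphism $x\to y$ already gives $y\cong x\oplus\mathrm{cone}$.
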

\begin{proof}
    There exist functors as in the following diagram 
    \[
\begin{tikzcd}
\mathsf{D}(R) \arrow[rr, "\mathsf{i}"] &  & \mathsf{D}(A) \arrow[rr, "\mathsf{e}"] &  & \mathsf{D}(R) \arrow[ll, "\mathsf{l}=-\otimes_{R}A"', bend right] \arrow["\mathsf{M}=\mathsf{el}"', loop, distance=2em, in=125, out=55]
\end{tikzcd}
    \]
    where the functor $\mathsf{i}$ is the restriction induced by the section $A\rightarrow R$. Consider an object $x$ in $\mathsf{D}(R)$. We know from Theorem \ref{etale Balmer}, together with Lemma \ref{summand}, that $\mathsf{i}(x)$ is a summand of $\mathsf{le}(\mathsf{i}(x))$. Consequently $\mathsf{ei}(x)\cong x$ is a summand of $\mathsf{elei}(x)\cong \mathsf{el}(x)\cong \mathsf{M}(x)$. 
\end{proof}

One direction of Corollary \ref{regularity for schemes} can be generalised to the following result, which is well-known (see for instance \cite[Proposition 3.17]{milne}), but follows as an easy consequence of our results. 

\begin{cor} \label{regularity for etale maps}
    Let $f\colon Y \rightarrow X$ be a separated \'etale morphism of schemes that are proper over a commutative noetherian ring $k$. If $X$ is regular, then $Y$ is regular. 
\end{cor}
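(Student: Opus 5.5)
The plan is to reduce to Theorem \ref{comparing regularity}(iii), applied to $\mathcal{T}=\mathsf{D}_{\mathsf{qc}}(X)$ and a separable exact smashing monad $\mathsf{M}$ on $\mathcal{T}$ whose module category recovers $\mathsf{D}_{\mathsf{qc}}(Y)$. First, I would justify that $f$ is affine in the relevant sense. \'Etale and separated alone do not force affineness, so the cleanest route is to assume, or reduce to, the case where $f$ is finite \'etale; this is automatic if $f$ is also proper, which holds here since $Y$ is proper over $k$ and $X$ is separated. Then $f_*\mathcal{O}_Y=\mathcal{A}$ is a locally free, finite, \'etale $\mathcal{O}_X$-algebra. \'Etale gives that $\Omega_{\mathcal{A}/\mathcal{O}_X}=0$ and that the diagonal is an open and closed immersion. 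Consequently the multiplication $\mathcal{A}\otimes_{\mathcal{O}_X}\mathcal{A}\to\mathcal{A}$ admits an $\mathcal{A}$-bimodule section, namely the idempotent cutting out the diagonal component. So $\mathcal{A}$ is a flat separable algebra object.

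Second, I would set $\mathsf{M}=-\otimes_{\mathcal{O}_X}\mathcal{A}\cong f_*f^*$ on $\mathsf{D}_{\mathsf{qc}}(X)$. This monad is exact and smashing, because tensoring with a flat quasi-coherent sheaf commutes with coproducts. It is separable by the section above, in the sense of Balmer's ring objects exactly as in Theorem \ref{etale Balmer}. The scheme-theoretic analogue of Theorem \ref{etale Balmer} then gives $\mathsf{D}_{\mathsf{qc}}(Y)\simeq\mathsf{M}\lMod_{\mathsf{D}_{\mathsf{qc}}(X)}$. Concretely, $f$ is affine, so $f_*$ identifies $\mathsf{D}_{\mathsf{qc}}(Y)$ with quasi-coherent $\mathcal{A}$-modules. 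The comparison functor $\Phi$ from (\ref{comparison}) for the adjunction $(f^*,f_*)$ is an equivalence up to retracts, because $f_*$ is separable. It is then a genuine equivalence, because $\mathsf{D}_{\mathsf{qc}}(Y)$ has coproducts and is therefore idempotent complete. Since $\mathcal{A}$ is locally free of finite rank, $\mathsf{M}$ preserves perfect complexes, so $\mathsf{M}(\mathcal{T}^{\mathsf{c}})\subseteq\mathcal{T}^{\mathsf{c}}$.

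Third, I would conclude. $X$ is regular, so by the corollary preceding Corollary \ref{regularity for schemes}, $\mathsf{D}_{\mathsf{qc}}(X)$ is regular. Theorem \ref{comparing regularity}(iii) then makes $\mathsf{D}_{\mathsf{qc}}(Y)$ regular. Applying the same corollary to the proper $k$-scheme $Y$ shows that $Y$ is regular.

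The main obstacle is the first two steps: producing the separable monad and the identification $\mathsf{D}_{\mathsf{qc}}(Y)\simeq\mathsf{M}\lMod$. This is where finiteness of $f$ is needed, and I would obtain it from properness of $Y$ together with separatedness. If $f$ were only quasi-finite, I would instead argue locally: regularity is local on $X$, so one would reduce to the affine case and use Theorem \ref{etale Balmer} directly. However, this does not fit the global compactly generated framework as neatly.
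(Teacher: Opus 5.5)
Your proof is correct, but it obtains the key hypothesis of Theorem~\ref{comparing regularity}(iii), namely $\mathsf{M}(\mathcal{T}^{\mathsf{c}})\subseteq\mathcal{T}^{\mathsf{c}}$, by a different route than the paper.

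\emph{The paper's route.} It never uses finiteness of $f$. It cites Balmer's theorem \cite[3.5 Theorem]{balmer2}, valid for any separated \'etale morphism, to get $\mathsf{D}(\mathsf{Qcoh}(Y))\simeq\mathbb{A}_f\lMod_{\mathsf{D}(\mathsf{Qcoh}(X))}$ with $\mathbb{A}_f=\mathbb{R}f_*f^*$. It then gets compact preservation only through the regularity of $X$:
\begin{itemize}
\item[$\bullet$] $f^*$ preserves $\mathsf{D}^{\mathsf{b}}(\mathsf{coh})$ by flatness;
\item[$\bullet$] $\mathbb{R}f_*$ preserves it by Lemma~\ref{perp under adjunction}, since $f^*$ preserves compacts;
\item[$\bullet$] hence $\mathbb{A}_f(\mathcal{T}^{\mathsf{b}}_c)\subseteq\mathcal{T}^{\mathsf{b}}_c$, and $\mathcal{T}^{\mathsf{b}}_c=\mathcal{T}^{\mathsf{c}}$ once $X$ is regular.
\end{itemize}

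\emph{Your route.} You observe that properness of $Y$ over $k$ and separatedness of $X$ make $f$ proper. This uses, implicitly, that $f$ is a $k$-morphism. Being proper and \'etale, $f$ is finite \'etale. So $\mathbb{A}_f\cong-\otimes\mathcal{A}$ with $\mathcal{A}=f_*\mathcal{O}_Y$ finite locally free, and compact preservation holds with no regularity assumption at all. The price is that you must build the separable ring object and the Eilenberg--Moore equivalence yourself. Your justification that $f_*$ is separable is only sketched: it amounts to the separability idempotent giving a natural section of the counit on quasi-coherent $\mathcal{A}$-modules. That is precisely the content of the Balmer theorem the paper cites, so you may simply invoke it.

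\emph{What each buys.} Your argument shows that the separatedness hypothesis is redundant and that the statement really concerns finite \'etale covers. It also gives compact preservation of the monad unconditionally. That is exactly what Theorem~\ref{singularity} needs in the \'etale part of Example~\ref{singularity of separable}, where $X$ is not assumed regular. The paper's proof of this corollary supplies compact preservation only when $X$ is regular, and then the singularity category is trivial. The paper's route, in turn, stays within the intrinsic machinery: the inclusion $\mathbb{A}_f(\mathcal{T}^{\mathsf{b}}_c)\subseteq\mathcal{T}^{\mathsf{b}}_c$ is purely formal, and no scheme-theoretic finiteness argument is required.
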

\begin{proof}
    In this case, there is an adjunction of exact functors as below 
\[\begin{tikzcd}
	{\mathcal{D}\coloneqq\mathsf{D}(\mathsf{Qcoh}(Y))} && {\mathsf{D}(\mathsf{Qcoh}(X))\eqqcolon\mathcal{T}}
	\arrow["{\mathbb{R}f_*}"', shift right=2, from=1-1, to=1-3]
	\arrow["{f^*}"', shift right=2, from=1-3, to=1-1]
\end{tikzcd}\]
    and we know from \cite[3.5 Theorem]{balmer2} that it induces an exact equivalence $\mathcal{D}\simeq \mathbb{A}_f\lMod_{\mathcal{T}}$, where the monad $\mathbb{A}_f\coloneqq \mathbb{R}f_*\circ f^*$ is smashing. The functor $f^*$ restricts to a functor $\mathsf{D}^{\mathsf{b}}(\mathsf{coh}(X))\rightarrow \mathsf{D}^{\mathsf{b}}(\mathsf{coh}(Y))$. Moreover, the same holds for $\mathbb{R}f_*$, namely it restricts to $\mathsf{D}^{\mathsf{b}}(\mathsf{coh}(Y))\rightarrow \mathsf{D}^{\mathsf{b}}(\mathsf{coh}(X))$, being right adjoint to the compact-preserving functor $f^*$ (see Lemma \ref{perp under adjunction}). Therefore the monad $\mathbb{A}_f$ restricts to $\mathsf{D}^{\mathsf{b}}(\mathsf{coh}(X))=\mathcal{T}^{\mathsf{b}}_c$. In particular, when $X$ is regular, the monad $\mathbb{A}_f$ restricts to the compact objects and therefore the result follows from Theorem \ref{comparing regularity}. 
\end{proof}

\section{Singularity Categories} \label{singularity categories}
We prove the following result relating the module category of a Verdier quotient and the Verdier quotients of module categories. For equivariant triangulated categories this was shown in \cite[Theorem~3.9]{sun}. We use this as a step in the proof of Theorem B of the introduction, which follows right after.   

\begin{prop} \label{verdier quotients}
    Let $\mathcal{T}$ be a triangulated category and $\mathsf{M}$ a separable exact monad on $\mathcal{T}$. Consider a thick triangulated subcategory $\mathcal{U}$ of $\mathcal{T}$ such that $\mathsf{M}(\mathcal{U})\subseteq \mathcal{U}$. The following hold. 
    \begin{itemize}
        \item[\textnormal{(i)}] $\mathsf{M}$ induces a monad $\overline{\mathsf{M}}$ on $\mathcal{T}/\mathcal{U}$ and $\overline{\mathsf{M}}\lMod_{\mathcal{T}/\mathcal{U}}$ admits a unique canonical triangulated structure. 
        \item[\textnormal{(ii)}] There is an exact functor $\mathsf{M}\lMod_{\mathcal{T}}/\mathsf{M}\lMod_{\mathcal{U}}\rightarrow \overline{\mathsf{M}}\lMod_{\mathcal{T}/\mathcal{U}}$ that is an equivalence up to retracts. 
    \end{itemize}
\end{prop}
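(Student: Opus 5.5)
We will deduce everything from the general comparison-functor criterion recalled after (\ref{comparison}): if $(\mathsf{l},\mathsf{e})$ is an adjunction with $\mathsf{e}$ separable, then $\mathsf{el}$ is a separable monad and the comparison functor $\Phi$ is an equivalence up to retracts. The plan is to produce a separable adjunction between $\mathcal{D}/\mathsf{M}\lMod_{\mathcal{U}}$ (where $\mathcal{D}=\mathsf{M}\lMod_{\mathcal{T}}$) and $\mathcal{T}/\mathcal{U}$ whose monad is $\overline{\mathsf{M}}$, and take $\Phi$ to be the desired functor.

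For (i), since $\mathsf{M}(\mathcal{U})\subseteq\mathcal{U}$ and $\mathsf{M}$ is exact, the universal property of the Verdier quotient $\mathsf{q}\colon\mathcal{T}\to\mathcal{T}/\mathcal{U}$ gives a unique exact functor $\overline{\mathsf{M}}$ with $\overline{\mathsf{M}}\mathsf{q}=\mathsf{q}\mathsf{M}$. The transformations $\eta,\mu,\sigma$ descend: a natural transformation between exact functors $\mathcal{T}\to\mathcal{T}$ preserving $\mathcal{U}$ induces one between the induced functors on $\mathcal{T}/\mathcal{U}$, compatibly with suspension. This works because morphisms in the quotient are roofs $x\leftarrow x'\rightarrow y$ with cone of $x'\to x$ in $\mathcal{U}$, and naturality with respect to such roofs follows from naturality in $\mathcal{T}$. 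The monad and separability identities hold after applying $\mathsf{q}$, hence on the quotient, by uniqueness of induced transformations. Then $\overline{\mathsf{M}}$ is a separable exact monad, and Lemma \ref{pretriangulated without idempotent} (with Convention \ref{convention}) gives the canonical triangulated structure.

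For (ii), set $\mathcal{V}=\mathsf{M}\lMod_{\mathcal{U}}$. It is a thick subcategory of $\mathcal{D}$: it is triangulated because triangles are detected by $\mathsf{U}_{\mathsf{M}}$, and it is closed under summands because $\mathcal{U}$ is. Also $\mathsf{T}_{\mathsf{M}}(\mathcal{U})\subseteq\mathcal{V}$ and $\mathsf{U}_{\mathsf{M}}(\mathcal{V})\subseteq\mathcal{U}$. Hence $\mathsf{T}_{\mathsf{M}}$ and $\mathsf{U}_{\mathsf{M}}$ descend to exact functors $\overline{\mathsf{T}}\colon\mathcal{T}/\mathcal{U}\to\mathcal{D}/\mathcal{V}$ and $\overline{\mathsf{U}}\colon\mathcal{D}/\mathcal{V}\to\mathcal{T}/\mathcal{U}$. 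Their unit and counit descend by the same argument as in (i), so the triangle identities persist and $(\overline{\mathsf{T}},\overline{\mathsf{U}})$ is an adjunction. Its monad is $\overline{\mathsf{U}}\,\overline{\mathsf{T}}=\overline{\mathsf{M}}$, with the same unit, and multiplication induced by $\mathsf{U}_{\mathsf{M}}\varepsilon\mathsf{T}_{\mathsf{M}}=\mu$.

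The remaining point, and the one needing care, is that $\overline{\mathsf{U}}$ is separable. The separability of $\mathsf{U}_{\mathsf{M}}$ is equivalent to the counit $\varepsilon\colon\mathsf{T}_{\mathsf{M}}\mathsf{U}_{\mathsf{M}}\to\mathsf{Id}_{\mathcal{D}}$ admitting a natural section $s$ (Rafael's criterion). Concretely, $s_{(x,u)}=\mathsf{M}(u)\circ\sigma_x\circ\eta_x$ up to the standard formula, and it is a morphism of modules by the separability identities. Since $s$ is a natural transformation between exact endofunctors of $\mathcal{D}$ preserving $\mathcal{V}$, it descends to a natural section of $\overline{\varepsilon}$, so $\overline{\mathsf{U}}$ is separable by Rafael's criterion again (see \cite[Lemma~2.2]{xwchen}). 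Now \cite[Proposition~3.5]{xwchen} applies: the comparison functor $\Phi\colon\mathcal{D}/\mathcal{V}\to\overline{\mathsf{M}}\lMod_{\mathcal{T}/\mathcal{U}}$ is an equivalence up to retracts. By the remark after Convention \ref{convention}, $\Phi$ is exact for the canonical structure, because triangles on the target are those detected by $\mathsf{U}_{\overline{\mathsf{M}}}$ and $\mathsf{U}_{\overline{\mathsf{M}}}\Phi=\overline{\mathsf{U}}$ is exact.

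The main obstacle is the systematic descent of natural transformations (and their identities) to Verdier quotients, including checking $s$ is natural on roofs. We handle it once as a general lemma: an exact functor preserving the thick subcategories yields a unique induced functor, and each natural transformation yields a unique induced transformation, uniquely determined by its restriction along the localization, because $\mathsf{q}$ is a localization and hence $2$-categorically epimorphic.
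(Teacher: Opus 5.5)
Your proposal is correct and follows essentially the same route as the paper: it descends the adjunction $(\mathsf{T}_{\mathsf{M}},\mathsf{U}_{\mathsf{M}})$ to the Verdier quotients $\mathsf{M}\lMod_{\mathcal{T}}/\mathsf{M}\lMod_{\mathcal{U}}$ and $\mathcal{T}/\mathcal{U}$, shows that the descended forgetful functor is separable, and applies \cite[Proposition~3.5]{xwchen} to the comparison functor. The differences are minor: the paper cites \cite[Lemma~3.10]{sun} for separability of $\overline{\mathsf{U}_{\mathsf{M}}}$, whereas you verify it directly by descending the section $\mathsf{M}(u)\circ\sigma_x\circ\eta_x$ of the counit; and the paper deduces that $\overline{\mathsf{M}}\lMod_{\mathcal{T}/\mathcal{U}}$ is triangulated via its idempotent completion, so it needs Convention~\ref{convention} only for $\mathsf{M}\lMod_{\mathcal{T}}$, whereas you invoke the convention directly.
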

\begin{proof}
    Using the assumption that $\mathsf{M}(\mathcal{U})\subseteq \mathcal{U}$, we infer the existence of the following commutative diagram of exact functors. 
\[\begin{tikzcd}
    {\mathcal{D}\coloneqq\mathsf{M}\lMod_{\mathcal{T}}} && {\mathcal{T}} \\
	\\
	{\mathcal{D}'\coloneqq \mathsf{M}\lMod_{\mathcal{U}}} && {\mathcal{U}}
	\arrow["{{{\mathsf{U}_{\mathsf{M}}}}}"', shift right=2, from=1-1, to=1-3]
	\arrow["{{{\mathsf{T}_{\mathsf{M}}}}}"', shift right=2, from=1-3, to=1-1]
	\arrow[hook, from=3-1, to=1-1]
	\arrow["{{{\mathsf{U}_{\mathsf{M}}}}}"', shift right=2, from=3-1, to=3-3]
	\arrow[hook, from=3-3, to=1-3]
	\arrow["{{{\mathsf{T}_{\mathsf{M}}}}}"', shift right=2, from=3-3, to=3-1]
\end{tikzcd}\]
    We may therefore pass to the Verdier quotients and deduce the existence of an adjunction of exact functors
\[\begin{tikzcd}
	{\mathcal{D}/\mathcal{D}'} && {\mathcal{T}/\mathcal{U}}
	\arrow["{{{\overline{\mathsf{U}_{\mathsf{M}}}}}}"', shift right=2, from=1-1, to=1-3]
	\arrow["{{{\overline{\mathsf{T}_{\mathsf{M}}}}}}"', shift right=2, from=1-3, to=1-1].
\end{tikzcd}\]
    The composite $\overline{\mathsf{M}}\coloneqq \overline{\mathsf{U}_{\mathsf{M}}}\circ \overline{\mathsf{T}_{\mathsf{M}}}$ defines a monad on $\mathcal{T}/\mathcal{U}$ (and it is precisely the functor induced by $\mathsf{M}$ after passing to the Verdier quotient $\mathcal{T}/\mathcal{U}$). We know from \cite[Lemma 3.10]{sun} that the functor $\overline{\mathsf{U}_{\mathsf{M}}}$ is separable and therefore, it follows from \cite[Proposition 3.5]{xwchen} that the comparison functor $\mathcal{D}/\mathcal{D}'\rightarrow~ \overline{\mathsf{M}}\lMod_{\mathcal{T}/\mathcal{U}}$ (which is an exact functor - with the triangulated structure given on $\overline{\mathsf{M}}\lMod_{\mathcal{T}/\mathcal{U}}$ from Lemma \ref{pretriangulated without idempotent}) is an equivalence up to retracts. Since the idempotent completion of $\overline{\mathsf{M}}\lMod_{\mathcal{T}/\mathcal{U}}$ is triangulated (compatible with the triangulated structure given on $\overline{\mathsf{M}}\lMod_{\mathcal{T}/\mathcal{U}}$), so is $\overline{\mathsf{M}}\lMod_{\mathcal{T}/\mathcal{U}}$ (observe that this holds independently of our Convention \ref{convention}, as long as $\mathsf{M}\lMod_{\mathcal{T}}$ is canonically triangulated).
\end{proof}

For any subcategories $\mathcal{X}$ and $\mathcal{Y}$ of a triangulated category $\mathcal{T}$, we denote by $\mathcal{X}\ast \mathcal{Y}$ the full subcategory of $\mathcal{T}$ that consists of the objects $t$ such that there is a triangle $x\rightarrow t\rightarrow y\rightarrow x[1]$ for some $x\in\mathcal{X}$ and $y\in\mathcal{Y}$. In \cite{regular}, the \emph{singularity category} of a $k$-linear compactly generated triangulated category $\mathcal{T}$ is defined as the Verdier quotient 
\begin{equation} \label{singularity definition}
    \mathcal{T}^{\mathsf{sg}}_k\coloneqq\mathsf{thick}( \mathcal{T}^{\mathsf{b}}_{c}\ast \mathcal{T}^{\mathsf{c}})/\mathcal{T}^{\mathsf{b}}_c\cap \mathcal{T}^{\mathsf{c}}.
\end{equation}
The above quotient ``measures regularity'' in the sense that $\mathcal{T}$ is regular if and only if $\mathcal{T}^{\mathsf{sg}}_k$ is trivial, see \cite[Lemma~5.8]{regular}.
\begin{exmp} \label{examples of singularity}
    In the following relevant to us examples, $k$ remains a commutative noetherian ring.
    \begin{itemize}
        \item Consider $\mathcal{T}=\mathsf{D}(A)$ the derived category of a Noether $k$-algebra. Then $\mathcal{T}^{\mathsf{sg}}_k$ is $\mathsf{D}_{\mathsf{sg}}(A)$, the usual singularity category of $A$. 
        \item Consider $\mathcal{T}=\mathsf{D}_{\mathsf{qc}}(X)$, where $X$ is a proper scheme over $k$. Then $\mathcal{T}^{\mathsf{sg}}_k$ is $\mathsf{D}_{\mathsf{sg}}(X)$, the usual singularity category of $X$ (see Subsection \ref{action on scheme}). 
        \item Consider $\mathcal{T}=\mathsf{D}(A)$ where $A$ is a proper dg algebra. The properness of $A$ implies the inclusion $\mathsf{per}(A)\subseteq \mathsf{D}_{\mathsf{fd}}(A)=\mathcal{T}^{\mathsf{b}}_c$, and 
        $\mathcal{T}^{\mathsf{sg}}_k$ is $\mathsf{D}_{\mathsf{fd}}(A)/\mathsf{per}(A)$.
    \end{itemize}
\end{exmp}

\begin{thm} \label{singularity}
    Let $\mathcal{T}$ be a $k$-linear compactly generated triangulated category and $\mathsf{M}$ an exact separable smashing monad on $\mathcal{T}$. Assume further that the inclusions $\mathsf{M}(\mathcal{T}^{\mathsf{b}}_c)\subseteq \mathcal{T}^{\mathsf{b}}_c$ and $\mathsf{M}(\mathcal{T}^{\mathsf{c}})\subseteq \mathcal{T}^{\mathsf{c}}$ hold. Then the monad $\mathsf{M}$ induces a monad $\overline{\mathsf{M}}$ on $\mathcal{T}^{\mathsf{sg}}_k$ and there is an exact functor
    \[
    {(\mathsf{M}  \ \!{\lMod}_{\mathcal{T}})}^{\mathsf{sg}}_k\rightarrow \overline{\mathsf{M}} \ \!{\lMod}_{\mathcal{T}^{\mathsf{sg}}_k}
    \]
 which is an equivalence up to retracts. 
\end{thm}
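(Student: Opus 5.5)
The plan is to reduce the statement to Proposition~\ref{verdier quotients}, applied to a suitable ambient triangulated category and thick subcategory. Write $\mathcal{D}\coloneqq\mathsf{M}\lMod_{\mathcal{T}}$. Put $\mathcal{S}\coloneqq\mathsf{thick}(\mathcal{T}^{\mathsf{b}}_c\ast\mathcal{T}^{\mathsf{c}})$ and $\mathcal{U}\coloneqq\mathcal{T}^{\mathsf{b}}_c\cap\mathcal{T}^{\mathsf{c}}$, so that $\mathcal{T}^{\mathsf{sg}}_k=\mathcal{S}/\mathcal{U}$.

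\textbf{Step 1: $\mathsf{M}$ preserves $\mathcal{S}$ and $\mathcal{U}$.} Since $\mathsf{M}$ is exact and preserves both $\mathcal{T}^{\mathsf{b}}_c$ and $\mathcal{T}^{\mathsf{c}}$, it maps $\mathcal{T}^{\mathsf{b}}_c\ast\mathcal{T}^{\mathsf{c}}$ into itself, by applying $\mathsf{M}$ to a defining triangle. Hence $\mathsf{M}(\mathcal{S})\subseteq\mathcal{S}$, using $\mathsf{M}(\mathsf{thick}(\mathcal{X}))\subseteq\mathsf{thick}(\mathsf{M}(\mathcal{X}))$ as in Lemma~\ref{paraskhnio}. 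Similarly $\mathsf{M}(\mathcal{U})\subseteq\mathcal{U}$. Restricting $\mathsf{M}$ to $\mathcal{S}$ gives a separable exact monad on the triangulated category $\mathcal{S}$. The unit, multiplication and section restrict because $\mathcal{S}$ is full.

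\textbf{Step 2: identify the pieces for $\mathcal{D}$.} By Theorem~\ref{balmer2} and $\mathsf{M}(\mathcal{T}^{\mathsf{c}})\subseteq\mathcal{T}^{\mathsf{c}}$, we have $\mathcal{D}^{\mathsf{c}}=\mathsf{M}\lMod_{\mathcal{T}^{\mathsf{c}}}$. By Theorem~\ref{main theorem}(iv), $\mathcal{D}^{\mathsf{b}}_c=\mathsf{M}\lMod_{\mathcal{T}^{\mathsf{b}}_c}$. Hence $\mathcal{D}^{\mathsf{b}}_c\cap\mathcal{D}^{\mathsf{c}}=\mathsf{M}\lMod_{\mathcal{U}}$, since both sides consist of the objects $(x,u)$ with $x\in\mathcal{U}$.

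The main point, and the step I expect to be the obstacle, is to show
\[
\mathsf{thick}(\mathcal{D}^{\mathsf{b}}_c\ast\mathcal{D}^{\mathsf{c}})=\mathsf{M}\lMod_{\mathcal{S}}.
\]
I will use Lemma~\ref{underdog}: the left side is thick, so it suffices that $(\mathsf{T}_{\mathsf{M}},\mathsf{U}_{\mathsf{M}})$ restricts to an adjunction between the left side and $\mathcal{S}$.

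For $\mathsf{T}_{\mathsf{M}}$: it sends $\mathcal{T}^{\mathsf{b}}_c$ into $\mathcal{D}^{\mathsf{b}}_c$, because $\mathsf{U}_{\mathsf{M}}\mathsf{T}_{\mathsf{M}}=\mathsf{M}$ preserves $\mathcal{T}^{\mathsf{b}}_c$ and Theorem~\ref{main theorem}(iv) applies. It sends $\mathcal{T}^{\mathsf{c}}$ into $\mathcal{D}^{\mathsf{c}}$. Being exact, it therefore maps $\mathcal{T}^{\mathsf{b}}_c\ast\mathcal{T}^{\mathsf{c}}$ into $\mathcal{D}^{\mathsf{b}}_c\ast\mathcal{D}^{\mathsf{c}}$, and hence $\mathcal{S}$ into the left side.

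For $\mathsf{U}_{\mathsf{M}}$: it maps $\mathcal{D}^{\mathsf{b}}_c$ into $\mathcal{T}^{\mathsf{b}}_c$ by Theorem~\ref{main theorem}(iv). It maps $\mathcal{D}^{\mathsf{c}}=\mathsf{M}\lMod_{\mathcal{T}^{\mathsf{c}}}$ into $\mathcal{T}^{\mathsf{c}}$. By exactness it maps the left side into $\mathcal{S}$.

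This yields $(\mathcal{D})^{\mathsf{sg}}_k=\mathsf{M}\lMod_{\mathcal{S}}/\mathsf{M}\lMod_{\mathcal{U}}$ as subquotients. One point needs care here: the triangulated structure on $\mathsf{M}\lMod_{\mathcal{S}}$ as a full subcategory of $\mathcal{D}$ must agree with the canonical structure coming from the monad on $\mathcal{S}$. It does, since in both cases a triangle is distinguished if and only if its image under $\mathsf{U}_{\mathsf{M}}$ is distinguished, as in Lemma~\ref{pretriangulated without idempotent}.

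\textbf{Step 3: conclude.} Apply Proposition~\ref{verdier quotients} to the triangulated category $\mathcal{S}$, the restricted separable exact monad $\mathsf{M}|_{\mathcal{S}}$, and the thick subcategory $\mathcal{U}$, which satisfies $\mathsf{M}(\mathcal{U})\subseteq\mathcal{U}$ by Step~1. This produces the induced monad $\overline{\mathsf{M}}$ on $\mathcal{S}/\mathcal{U}=\mathcal{T}^{\mathsf{sg}}_k$, together with its canonical triangulated structure on $\overline{\mathsf{M}}\lMod_{\mathcal{T}^{\mathsf{sg}}_k}$. It also produces an exact functor
\[
\mathsf{M}\lMod_{\mathcal{S}}/\mathsf{M}\lMod_{\mathcal{U}}\to\overline{\mathsf{M}}\lMod_{\mathcal{T}^{\mathsf{sg}}_k}
\]
that is an equivalence up to retracts. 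Composing with the identification from Step~2 gives the claimed functor $(\mathsf{M}\lMod_{\mathcal{T}})^{\mathsf{sg}}_k\to\overline{\mathsf{M}}\lMod_{\mathcal{T}^{\mathsf{sg}}_k}$.
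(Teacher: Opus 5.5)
Your proposal is correct and follows essentially the paper's argument. Like the paper, you restrict the adjunction $(\mathsf{T}_{\mathsf{M}},\mathsf{U}_{\mathsf{M}})$ to $\mathsf{thick}(\mathcal{T}^{\mathsf{b}}_c\ast\mathcal{T}^{\mathsf{c}})$ and $\mathcal{T}^{\mathsf{b}}_c\cap\mathcal{T}^{\mathsf{c}}$, identify the corresponding subcategories of $\mathcal{D}$ as module categories via Lemma~\ref{underdog} (or directly, for the intersection), and conclude with Proposition~\ref{verdier quotients}; your explicit checks that $\mathsf{M}$ preserves both subcategories and that the triangulated structures agree are details the paper leaves implicit.
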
 

\begin{proof}
    As always we denote $\mathsf{M}\lMod_{\mathcal{T}}$ by $\mathcal{D}$. We claim first that the adjunction $(\mathsf{T}_{\mathsf{M}},\mathsf{U}_{\mathsf{M}})$ restricts to adjunctions of full subcategories as in the following diagram. 
    \[
    \begin{tikzcd}
	{\mathsf{thick}(\mathcal{D}^{\mathsf{b}}_c\ast \mathcal{D}^{\mathsf{c}})} && {\mathsf{thick}(\mathcal{T}^{\mathsf{b}}_c\ast \mathcal{T}^{\mathsf{c}})} \\
	\\
	{\mathcal{D}^{\mathsf{b}}_c\cap \mathcal{D}^{\mathsf{c}}} && {\mathcal{T}^{\mathsf{b}}_c\cap\mathcal{T}^{\mathsf{c}}}
	\arrow["{\mathsf{U}_{\mathsf{M}}}"', shift right=2, from=1-1, to=1-3]
	\arrow["{\mathsf{T}_{\mathsf{M}}}"', shift right=2, from=1-3, to=1-1]
	\arrow[hook, from=3-1, to=1-1]
	\arrow["{\mathsf{U}_{\mathsf{M}}}"', shift right=2, from=3-1, to=3-3]
	\arrow[hook, from=3-3, to=1-3]
	\arrow["{\mathsf{T}_{\mathsf{M}}}"', shift right=2, from=3-3, to=3-1]
\end{tikzcd}
    \]
    Indeed, regarding $\mathsf{U}_{\mathsf{M}}$, it always restricts to the subcategories of bounded finite objects and it also restricts to compact objects because of the inclusion $\mathsf{M}(\mathcal{T}^{\mathsf{c}})\subseteq \mathcal{T}^{\mathsf{c}}$. Regarding $\mathsf{T}_{\mathsf{M}}$, it always preserves compact objects and it also preserves bounded finite objects because of the inclusion $\mathsf{M}(\mathcal{T}^{\mathsf{b}}_c)\subseteq \mathcal{T}^{\mathsf{b}}_c$ combined with Theorem \ref{main theorem}(iv). From the above one can also easily deduce the functors of the diagram. Since $\mathsf{thick}(\mathcal{D}^{\mathsf{b}}_c\ast \mathcal{D}^{\mathsf{c}})$ and $\mathcal{D}^{\mathsf{b}}_c\cap \mathcal{D}^{\mathsf{c}}$ are both thick, we can identify them with $\mathsf{M}\lMod_{\mathsf{thick}(\mathcal{T}^{\mathsf{b}}_c\ast \mathcal{T}^{\mathsf{c}})}$ and $\mathsf{M}\lMod_{\mathcal{T}^{\mathsf{b}}_c\cap \mathcal{T}^{\mathsf{c}}}$ respectively, using Lemma \ref{underdog}. In particular, we can use Proposition \ref{verdier quotients} to conclude the result.  
  \end{proof}

\begin{exmp} \label{singularity of separable}
Let $A$ be a Noether algebra over $k$. Assume further that $A$ is flat and separable (for instance a commutative noetherian ring that is \'etale over $k$). Then, for $\mathsf{M}=-\otimes_kA\colon\mathsf{D}_{\mathsf{sg}}(k)\rightarrow \mathsf{D}_{\mathsf{sg}}(k)$, it follows from Theorem \ref{singularity} together with Example \ref{examples of singularity} that there is an exact functor 
    \[
    \mathsf{D}_{\mathsf{sg}}(A)\rightarrow \mathsf{M} \ \! {\lMod}_{\mathsf{D}_{\mathsf{sg}}(k)}
    \]
    which is an equivalence up to retracts. If $A$ is assumed to be an Artin algebra, then the above is an equivalence, since in this case the singularity category is idempotent complete, by \cite[Corollary~2.4]{chen2}. In a similar way, using the setup of the proof of Corollary \ref{regularity for etale maps}, it follows that for a separated \'etale morphism $f\colon Y\rightarrow X$ of proper schemes over $k$, there is an exact functor 
    \[
    \mathsf{D}_{\mathsf{sg}}(Y)\rightarrow \mathbb{A}_f\lMod{_{\mathsf{D}_{\mathsf{sg}}(X)}},
    \]
    that is an equivalence up to retracts. 
\end{exmp}

\begin{cor} \label{equivariant singularity}
    Let $\mathcal{T}$ be a $k$-linear compactly generated triangulated category and $G$ a finite group acting on $\mathcal{T}$ such that $|G|$ is invertible in $\mathcal{T}$. Then there is an exact functor 
    \[
    (\mathcal{T}^G)^\mathsf{sg}_k \to (\mathcal{T}^\mathsf{sg}_k)^G
    \]
    which is an equivalence up to retracts. 
\end{cor}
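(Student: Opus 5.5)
The plan is to specialise Theorem~\ref{singularity} to the monad $\mathsf{M}=\mathsf{F}\circ\mathsf{Ind}$ coming from the action, and then identify both sides with the equivariant categories in the statement.

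First, I would check the hypotheses of Theorem~\ref{singularity}. By the discussion at the start of Subsection~\ref{Equivariant triangulated categories}, $\mathsf{M}(x)=\oplus_{g\in G}\rho_g(x)$ is an exact separable smashing monad, since $|G|$ is invertible. Moreover, $\mathsf{F}$ and $\mathsf{Ind}$ restrict to $\mathcal{T}^{\mathsf{c}}$ and to $\mathcal{T}^{\mathsf{b}}_c$, because of the ambidextrous adjunction and Lemma~\ref{perp under adjunction}. Hence $\mathsf{M}(\mathcal{T}^{\mathsf{c}})\subseteq\mathcal{T}^{\mathsf{c}}$ and $\mathsf{M}(\mathcal{T}^{\mathsf{b}}_c)\subseteq\mathcal{T}^{\mathsf{b}}_c$. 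Theorem~\ref{singularity} then gives an exact functor $(\mathsf{M}\lMod_{\mathcal{T}})^{\mathsf{sg}}_k\to\overline{\mathsf{M}}\lMod_{\mathcal{T}^{\mathsf{sg}}_k}$ that is an equivalence up to retracts.

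Next, I identify the source. The comparison functor $\mathcal{T}^G\to\mathsf{M}\lMod_{\mathcal{T}}$ is an equivalence (Elagin). It is exact for the canonical triangulated structures, since both are characterised by the forgetful functor detecting triangles. The subcategories $\mathcal{T}^{\mathsf{c}}$ and $\mathcal{T}^{\mathsf{b}}_c$ are intrinsic and so are preserved by exact equivalences. It follows that $(\mathcal{T}^G)^{\mathsf{sg}}_k\simeq(\mathsf{M}\lMod_{\mathcal{T}})^{\mathsf{sg}}_k$.

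Then I identify the target. Each $\rho_g$ is an exact autoequivalence preserving $\mathcal{T}^{\mathsf{c}}$ and $\mathcal{T}^{\mathsf{b}}_c$. It therefore preserves $\mathsf{thick}(\mathcal{T}^{\mathsf{b}}_c\ast\mathcal{T}^{\mathsf{c}})$ and $\mathcal{T}^{\mathsf{b}}_c\cap\mathcal{T}^{\mathsf{c}}$, and so induces autoequivalences $\overline{\rho}_g$ of $\mathcal{T}^{\mathsf{sg}}_k$. The isomorphisms $\theta_{g,h}$ descend to the Verdier quotient, so $G$ acts on $\mathcal{T}^{\mathsf{sg}}_k$ and $|G|$ remains invertible there. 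The induced monad is $\overline{\mathsf{M}}=\oplus_g\overline{\rho}_g$, which is exactly the monad $\mathsf{F}\circ\mathsf{Ind}$ of this quotient action. As recalled in \ref{equivariant triangulated categories}, the comparison functor $(\mathcal{T}^{\mathsf{sg}}_k)^G\to\overline{\mathsf{M}}\lMod_{\mathcal{T}^{\mathsf{sg}}_k}$ is then an equivalence, and it is exact for the canonical structures. Composing gives the required functor $(\mathcal{T}^G)^{\mathsf{sg}}_k\to(\mathcal{T}^{\mathsf{sg}}_k)^G$, and it is an equivalence up to retracts.

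The main obstacle is making sure that $\overline{\mathsf{M}}$ from Proposition~\ref{verdier quotients}, defined as $\overline{\mathsf{U}_{\mathsf{M}}}\circ\overline{\mathsf{T}_{\mathsf{M}}}$ with unit and multiplication induced from $\mathsf{M}$, agrees as a monad with the monad of the induced action. The underlying functors agree because Verdier localisation commutes with finite sums. For the unit and multiplication I would note that both are the images under the quotient functor of the structure maps of $\mathsf{M}$ on $\mathcal{T}$, which for $\mathsf{F}\circ\mathsf{Ind}$ are built from the $\theta_{g,h}$ and the inclusions of the summands. Alternatively, this step can be bypassed by citing \cite[Theorem~3.9]{sun}, which proves the Verdier-quotient statement directly for equivariant categories.
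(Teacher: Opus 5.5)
Your proposal is correct and follows essentially the same route as the paper. The paper also applies Theorem~\ref{singularity} to $\mathsf{M}=\mathsf{F}\circ\mathsf{Ind}$, with the hypotheses supplied by the discussion before Corollary~\ref{subcategories for equivariant}; you additionally spell out the identification of the source and target with $(\mathcal{T}^G)^{\mathsf{sg}}_k$ and $(\mathcal{T}^{\mathsf{sg}}_k)^G$, which the paper leaves implicit.
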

\begin{proof}
    This follows from Theorem 
    \ref{singularity} together with the discussion before Corollary \ref{subcategories for equivariant}.
\end{proof}

\begin{exmp} \label{singularity of quotient scheme}
    Let  $X$ be a proper scheme over a commutative noetherian ring $k$, $G$ a finite group such that $|G|$ is invertible in $k$ and assume that $G$ acts freely on $X$ and that $X/G$ exists (see Subsection \ref{action on scheme} for details). Then, it follows from Example \ref{examples of singularity} and Corollary \ref{equivariant singularity} that there is an exact functor
    \[
    \mathsf{D_{sg}}(X/G) \to \mathsf{D_{sg}}(X)^G,
    \]
    which is an equivalence up to retracts. We note that similar results were observed in \cite{sun} for the singularity category of an abelian category with enough projectives.
\end{exmp}

\begin{exmp} \label{singularity of skew group dg}
    Let $A$ be a proper dg algebra over $k$ and assume an action of a finite group $G$ such that $|G|$ is invertible in $k$. Then, it follows by combining Lemma \ref{connective and proper}, Proposition \ref{derived category of dg skew}, Example \ref{examples of singularity} and Corollary \ref{equivariant singularity}, that there is an exact functor 
    \[
     \mathsf{D}_{\mathsf{sg}}(A\#G)\rightarrow \mathsf{D}_{\mathsf{sg}}(A)^G
    \]
    which is an equivalence up to retracts. 
\end{exmp}

\section{Global dimension via t-structures}  \label{Global dimension via t-structures}
It is slightly unfortunate in our setup that we can only conclude, for instance for derived categories of rings, that, under the assumptions of Corollary \ref{regularity for the derives category}, $\gd\mathsf{D}(R)<\infty$ if and only if $\gd\mathsf{D}(R\#G)<\infty$, while it is well-known that $\gd R\#G=\gd R$ (see \cite{reiten_riedtmann}). We remedy this by considering a notion of global dimension relative to a given t-structure defined in \cite{kostas}. 

\begin{defn}
    Let $\mathcal{T}$ be a triangulated category and $(\mathcal{U},\mathcal{V})$ a t-structure in $\mathcal{T}$ with heart $\mathcal{H}\coloneqq \mathcal{U}\cap \mathcal{V}[1]$. We say that the \emph{global dimension of $\mathcal{T}$ relative to $\mathcal{H}$} is equal to $n\geq 0$, denoted by $\gd_{\mathcal{H}}\mathcal{T}= n$, if 
    \[
    \mathsf{Hom}_{\mathcal{T}}(x,y[>\!n])=0, \ 
\textnormal{for all} \ x,y \in \mathcal{H}  \]
     and $n$ is the smallest integer with this property. If no such $n$ exists, then the global dimension is infinite and we write $\gd_{\mathcal{H}}\mathcal{T}=\infty$.
\end{defn}

\begin{rem}
    When $M$ is a compact silting generator of a compactly generated triangulated category $\mathcal{T}$ (i.e.\ $\mathsf{thick}(M)=\mathcal{T}^{\mathsf{c}}$ and $\mathsf{Hom}_{\mathcal{T}}(M,M[>0])=0$), then there is a t-structure $(M^{\perp_{> 0}},M^{\perp_{\leq 0}})$ with heart denoted by $\mathcal{H}_M$. If additionally $M$ belongs in $ M^{\padova}$, then $\mathcal{T}$ has finite global dimension (in the sense of Definition \ref{notions of regularity}) if and only if it has finite global dimension relative to $\mathcal{H}_M$, see \cite[Theorem 2.9]{kostas}. 
\end{rem}

We have the following result. 

\begin{prop} \label{global dimension via t structures}
    Let $\mathcal{T}$ be a triangulated category and $\mathsf{M}$ a separable exact monad on $\mathcal{T}$. Assume the existence of t-structures in $\mathcal{T}$ and $\mathsf{M}\lMod_{\mathcal{T}}\eqqcolon \mathcal{D}$, say with hearts $\mathcal{H_T}$ and $\mathcal{H_D}$ respectively, such that the functors $\mathsf{T}_{\mathsf{M}}$ and $\mathsf{U}_{\mathsf{M}}$ are t-exact. The following hold. 
    \begin{itemize}
        \item[\textnormal{(i)}] $\gd_{\mathcal{H_D}}\mathcal{D}\leq \gd_{\mathcal{H_T}}\mathcal{T}$. 
        \item[\textnormal{(ii)}] If $x$ is a summand of $\mathsf{M}(x)$ for every $x\in\mathcal{T}$, then $\gd_{\mathcal{H_T}}\mathcal{T}\leq \gd_{\mathcal{H_D}}\mathcal{D}$. 
    \end{itemize}
\end{prop}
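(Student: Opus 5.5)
The plan is to compare Hom-vanishing in the two hearts directly, using only t-exactness of $\mathsf{T}_{\mathsf{M}}$ and $\mathsf{U}_{\mathsf{M}}$, the adjunction $(\mathsf{T}_{\mathsf{M}},\mathsf{U}_{\mathsf{M}})$, and the summand statements (Lemma \ref{summand} for (i), the extra hypothesis for (ii)). Since both functors are exact and t-exact, they commute with shifts and restrict to functors $\mathcal{H_D}\to\mathcal{H_T}$ and $\mathcal{H_T}\to\mathcal{H_D}$.

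For (i), assume $\gd_{\mathcal{H_T}}\mathcal{T}=n<\infty$; otherwise there is nothing to prove. Let $x,y\in\mathcal{H_D}$ and $m>n$. By Lemma \ref{summand}, $x$ is a summand of $\mathsf{T}_{\mathsf{M}}\mathsf{U}_{\mathsf{M}}(x)$. Hence $\mathsf{Hom}_{\mathcal{D}}(x,y[m])$ is a direct summand of
\[
\mathsf{Hom}_{\mathcal{D}}(\mathsf{T}_{\mathsf{M}}\mathsf{U}_{\mathsf{M}}(x),y[m])\cong \mathsf{Hom}_{\mathcal{T}}(\mathsf{U}_{\mathsf{M}}(x),\mathsf{U}_{\mathsf{M}}(y)[m]),
\]
using the adjunction and the fact that $\mathsf{U}_{\mathsf{M}}$ commutes with the suspension. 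By t-exactness, $\mathsf{U}_{\mathsf{M}}(x)$ and $\mathsf{U}_{\mathsf{M}}(y)$ lie in $\mathcal{H_T}$, so the right-hand side vanishes. Therefore $\gd_{\mathcal{H_D}}\mathcal{D}\le n$.

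For (ii), argue symmetrically. Assume $\gd_{\mathcal{H_D}}\mathcal{D}=n<\infty$, and take $x,y\in\mathcal{H_T}$ and $m>n$. By hypothesis $x$ is a summand of $\mathsf{M}(x)=\mathsf{U}_{\mathsf{M}}\mathsf{T}_{\mathsf{M}}(x)$. Rather than using a right adjoint of $\mathsf{U}_{\mathsf{M}}$, which need not exist here, also use that $y$ is a summand of $\mathsf{M}(y)$. Then $\mathsf{Hom}_{\mathcal{T}}(x,y[m])$ is a summand of
\[
\mathsf{Hom}_{\mathcal{T}}(x,\mathsf{U}_{\mathsf{M}}\mathsf{T}_{\mathsf{M}}(y)[m])\cong\mathsf{Hom}_{\mathcal{D}}(\mathsf{T}_{\mathsf{M}}(x),\mathsf{T}_{\mathsf{M}}(y)[m]),
\]
which vanishes because $\mathsf{T}_{\mathsf{M}}(x),\mathsf{T}_{\mathsf{M}}(y)\in\mathcal{H_D}$ by t-exactness. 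Hence $\gd_{\mathcal{H_T}}\mathcal{T}\le n$.

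The only point needing care is the choice in (ii): the summand must be taken in the second variable so that the available adjunction $(\mathsf{T}_{\mathsf{M}},\mathsf{U}_{\mathsf{M}})$ applies. The remaining steps are routine checks that the adjunction isomorphisms are compatible with shifts.
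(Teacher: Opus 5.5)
Your proof is correct and matches the paper's argument. In (i) you apply Lemma~\ref{summand} in the first variable, then use the adjunction and t-exactness of $\mathsf{U}_{\mathsf{M}}$; in (ii) you use the summand hypothesis in the second variable $y$ with t-exactness of $\mathsf{T}_{\mathsf{M}}$, exactly as the paper does (your remark that $x$ is a summand of $\mathsf{M}(x)$ is never used and can be dropped).
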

\begin{proof}
    (i) Assume that $\gd_{\mathcal{H_T}}\mathcal{T}=n$ and let $x,y$ be objects in $\mathcal{H_D}$. Then we have  
    \[
    \mathsf{Hom}_{\mathcal{D}}(\mathsf{T}_{\mathsf{M}}\mathsf{U}_{\mathsf{M}}(x),y[>\!n])\cong \mathsf{Hom}_{\mathcal{T}}(\mathsf{U}_{\mathsf{M}}(x),\mathsf{U}_{\mathsf{M}}(y)[>\!n])\cong 0,
    \]
    since $\mathsf{U}_{\mathsf{M}}(x)$ and $\mathsf{U}_{\mathsf{M}}(y)$ both belong in $\mathcal{H_T}$, because $\mathsf{U}_{\mathsf{M}}$ is t-exact. By Lemma~\ref{summand}, we know that $x$ is a summand of $\mathsf{T}_{\mathsf{M}}\mathsf{U}_{\mathsf{M}}(x)$, so it follows that $\mathsf{Hom}_{\mathcal{D}}(x,y[>\!n])\cong 0$. Therefore $\gd_{\mathcal{H_D}}\mathcal{D}\leq n$. \\ 
    (ii) Assume that $\gd_{\mathcal{H_D}}\mathcal{D}=n$ and let $x,y$ be objects in $\mathcal{H_T}$. Then we have 
    \[
    \mathsf{Hom}_{\mathcal{T}}(x,\mathsf{U}_{\mathsf{M}}\mathsf{T}_{\mathsf{M}}(y)[>\!n])\cong \mathsf{Hom}_{\mathcal{D}}(\mathsf{T}_{\mathsf{M}}(x),\mathsf{T}_{\mathsf{M}}(y)[>\!n])\cong 0,
    \]
    since $\mathsf{T}_{\mathsf{M}}(x),\mathsf{T}_{\mathsf{M}}(y)$ belong in $\mathcal{H_D}$, as a consequence of t-exactness of $\mathsf{T}_{\mathsf{M}}$.
    Since $y$ is assumed to be a summand of $\mathsf{M}(y)=\mathsf{U}_{\mathsf{M}}\mathsf{T}_{\mathsf{M}}(y)$, it follows that $\mathsf{Hom}_{\mathcal{T}}(x,y[>\!n])=0$, and therefore $\gd_{\mathcal{H_T}}\mathcal{T}\leq n$. 
\end{proof}

\begin{exmp}
    Let $R$ be a commutative ring and $A$ a separable flat $R$-algebra. By working with the canonical t-structures in $\mathsf{D}(A)$ and $\mathsf{D}(R)$ and using Theorem \ref{etale Balmer} and Proposition \ref{global dimension via t structures}, one can see that 
    \[
    \gd A\leq \gd R. 
    \]
    When $A$ is a split extension of $R$, then Lemma \ref{split extension} implies the converse inequality. 
    
\end{exmp}

Proposition \ref{global dimension via t structures} applies, of course, to equivariant triangulated categories. 

\begin{cor} \label{global dimension for equivariant via t structure}
    Let $\mathcal{T}$ be a triangulated category and assume an action of a finite group $G$ on $\mathcal{T}$ such that $|G|$ is invertible in $\mathcal{T}$. Assume moreover the existence of t-structures on $\mathcal{T}$ and $\mathcal{T}^{G}$ with hearts $\mathcal{H}$ and $\mathcal{H}_G$ respectively, such that the functors $\mathsf{F}$ and $\mathsf{Ind}$ are t-exact. Then $\gd{_{\mathcal{H}_G}}\mathcal{T}^{G}=\gd{_{\mathcal{H}}}\mathcal{T}$.
\end{cor}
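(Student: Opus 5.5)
The plan is to reduce the statement to Proposition \ref{global dimension via t structures}, applied to the monad $\mathsf{M}=\mathsf{F}\circ\mathsf{Ind}$ coming from the group action.

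First, identify $\mathcal{T}^G$ with $\mathsf{M}\lMod_{\mathcal{T}}$. As recalled in Subsection \ref{equivariant triangulated categories}, since $|G|$ is invertible in $\mathcal{T}$:
\begin{itemize}
\item[$\bullet$] $\mathsf{M}$ is a separable exact monad;
\item[$\bullet$] the comparison functor $\Phi\colon\mathcal{T}^G\to\mathsf{M}\lMod_{\mathcal{T}}$ is an equivalence;
\item[$\bullet$] $\mathsf{U}_{\mathsf{M}}\circ\Phi=\mathsf{F}$ and $\Phi\circ\mathsf{Ind}=\mathsf{T}_{\mathsf{M}}$, by (\ref{comparison}).
\end{itemize}
Equip $\mathcal{T}^G$ with the pre-triangulated structure of Lemma \ref{pretriangulated without idempotent}, i.e.\ the one making $\mathsf{F}$ exact. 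Then $\Phi$ is exact, since it is the comparison functor and distinguished triangles on both sides are detected by the forgetful functors.

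Next, transport the t-structure on $\mathcal{T}^G$ along $\Phi$ to a t-structure on $\mathcal{D}:=\mathsf{M}\lMod_{\mathcal{T}}$. Its heart $\mathcal{H_D}$ is the essential image $\Phi(\mathcal{H}_G)$. Because $\Phi$ is an exact equivalence, for $x,y\in\mathcal{H}_G$ we have $\mathsf{Hom}_{\mathcal{T}^G}(x,y[n])\cong\mathsf{Hom}_{\mathcal{D}}(\Phi x,\Phi y[n])$. Hence $\gd_{\mathcal{H_D}}\mathcal{D}=\gd_{\mathcal{H}_G}\mathcal{T}^G$.

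Under this transport, t-exactness of $\mathsf{F}$ and $\mathsf{Ind}$ becomes t-exactness of $\mathsf{U}_{\mathsf{M}}$ and $\mathsf{T}_{\mathsf{M}}$. The only point to verify is that the relations $\mathsf{U}_{\mathsf{M}}\Phi=\mathsf{F}$ and $\Phi\,\mathsf{Ind}=\mathsf{T}_{\mathsf{M}}$ hold up to natural isomorphism compatible with the aisles. This is immediate, since aisles are closed under isomorphism.

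Proposition \ref{global dimension via t structures}(i) now gives $\gd_{\mathcal{H}_G}\mathcal{T}^G\le\gd_{\mathcal{H}}\mathcal{T}$. For the reverse inequality we use part (ii), whose hypothesis holds because $\mathsf{M}(x)=\oplus_{g\in G}\rho_g(x)$ contains $\rho_1(x)\cong x$ as a summand; this is the observation preceding Corollary \ref{regularity for equivariant}.

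The main obstacle is only bookkeeping: checking that the transported t-structure and the functor identifications via $\Phi$ are legitimate. Since $\Phi$ is a genuine exact equivalence, this is routine, and no idempotent completeness issue arises.
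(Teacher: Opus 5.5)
Your proposal is correct and follows the paper's route. The paper deduces this corollary from Proposition~\ref{global dimension via t structures}, parts (i) and (ii), using the comparison equivalence $\mathcal{T}^G\simeq\mathsf{M}\lMod_{\mathcal{T}}$ and the fact that $x$ is a summand of $\mathsf{M}(x)=\oplus_{g\in G}\rho_g(x)$; your transport of the t-structure along $\Phi$ simply makes explicit the bookkeeping the paper leaves implicit.
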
 

We refer to \cite[Proposition 3.23]{sun} for sufficient conditions for a t-structured on $\T$ to induce an equivariant t-structure on $\T^G$. 

\begin{exmp}
    The derived category of a ring admits a canonical t-structure whose heart is equivalent to the module category $\Mod R$. It is direct that $\gd{_{\Mod R}}\mathsf{D}(R)=\gd R$, and by using Corollary~\ref{global dimension for equivariant via t structure} one can easily deduce that
    \[
    \gd R\#G=\gd R,
    \]
    when $G$ acts on $R$ and $|G|$ is invertible in $R$. Indeed, simply observe that the canonical t-structure of $\mathsf{D}(R)$ is $G$-invariant and that the equivariant t-structure of $\mathsf{D}(R)^G\simeq \mathsf{D}(R\#G)$ is the canonical one.  
\end{exmp} 

\begin{exmp} \label{singularity of skew group dg 2}
    For a connective dg algebra $A$ over a field $k$, its global dimension is defined to be the global dimension of $\mathsf{D}(A)$ relative to the heart induced by $A$ itself (which is a silting object), see \cite{kostas,minamoto global dimension,tomonaga}. Consider a finite group $G$ for which $|G|$ is invertible in $k$ and assume an action of $G$ on $A$. Then the adjunction 
    \[\begin{tikzcd}
	{\mathsf{D}(A\#G)} & {\mathsf{D}(A)}
	\arrow[shift right, from=1-1, to=1-2]
	\arrow[shift right, from=1-2, to=1-1]
\end{tikzcd}\]
     restricts to an adjunction between the hearts $(A\#G)^{\perp_{\neq 0}}$ and $A^{\perp_{\neq 0}}$, i.e.\ the hearts of the t-structures generated by $A\#G$ and $A$ respectively (recall from Lemma \ref{connective and proper} that $A\#G$ is connective). It follows as a consequence of Proposition \ref{derived category of dg skew} and Corollary \ref{global dimension for equivariant via t structure} that
     \[
     \gd A\#G=\gd A.
     \]
     When $A$ is locally finite (meaning that $H^n(A)\in\smod k$ for all $n$), then $A$ has finite global dimension if and only if $\mathsf{D}_{\mathsf{fd}}(A)\subseteq \mathsf{per}(A)$, see \cite[Proposition 3.4]{tomonaga} and \cite[Remark 2.20]{kostas}. As a consequence, the $k$-singularity category of $\mathsf{D}(A)$, as defined in (\ref{singularity definition}), is the quotient $\mathsf{per}(A)/\mathsf{D}_{\mathsf{fd}}(A)$. Following \cite{AGK}, we call the latter quotient the \emph{AGK singularity category} associated to $A$, and we denote it by $\mathsf{D}_{\mathsf{agk}}(A)$. Under the setup above, we also have that $A\# G$ is locally finite. We can therefore use Corollary \ref{equivariant singularity} to deduce the existence of an exact functor  
     \[
     \mathsf{D}_{\mathsf{agk}}(A\#G)\rightarrow \mathsf{D}_{\mathsf{agk}}(A)^G
     \]
     that is an equivalence up to retracts. 
\end{exmp}

\appendix
\section{Brown-Comenetz duals} \label{Brown-Comenetz duals}
Let $\mathcal{T}$ be a compactly generated triangulated category. For every compact object $t$ of $\mathcal{T}$, there is an object $t^*$, called the \emph{Brown-Comenetz dual} of $t$, such that 
\[
\mathsf{Hom}_{\mathcal{T}}(-,t^*)\cong \mathsf{Hom}_{\mathbb{Z}}(\mathsf{Hom}_{\mathcal{T}}(t,-),\mathbb{Q}/\mathbb{Z}).
\]
The map $t\mapsto t^*$ defines an exact functor $\mathrm{S}_{\mathcal{T}}\colon\mathcal{T}^{\mathsf{c}}\rightarrow \mathcal{T}$, a \emph{partial Serre functor} in the sense of \cite{OPS2}, see in particular \cite[Appendix B.]{OPS2}. The Brown-Comenetz duals play an important role in the theory of the intrinsic subcategories that we deal with. For instance, it is known that when $\mathcal{T}$ is compactly generated, then $\mathcal{T}^{\mathsf{b}}={^{\padova}}((\mathcal{T}^{\mathsf{c}})^*)$, see \cite[Lemma 3.2(ii)]{regular}. For this reason, we want to briefly investigate their behaviour under separable extensions of triangulated categories. We remark that the recent work \cite{duality and recollements} deals with a similar problem for recollements. Since every recollement of compactly generated triangulated categories gives rise to a Bousfield localization (see \cite{krause localization}) and since such a Bousfield localization is a separable extension of a smashing monad, see \cite[4.5 Remark]{balmer2}, our setup is slightly more general. 

As already explained in Remark \ref{comonad}, if we assume the setup of Theorem \ref{balmer2}, then the forgetful functor $\mathsf{U}_{\mathsf{M}}$ admits a right adjoint $\mathsf{R}_{\mathsf{M}}$ as in the diagram below. 
\[
\begin{tikzcd}
\mathcal{D} \arrow[rr, "\mathsf{U}_{\mathsf{M}}"]                     &  & \mathcal{T} \arrow[ll, "\mathsf{T}_{\mathsf{M}}"', bend right] \arrow[ll, "\mathsf{R}_{\mathsf{M}}", bend left] \\
                                                                      &  &                                                                                                                 \\
\mathcal{D}^{\mathsf{c}} \arrow[uu, "\mathrm{S}_{\mathcal{D}}", hook] &  & \mathcal{T}^{\mathsf{c}} \arrow[uu, "\mathrm{S}_{\mathcal{T}}"', hook] \arrow[ll, "\mathsf{T}_{\mathsf{M}}"]   
\end{tikzcd}
\]

\begin{lem} \label{appendix lemma}
    We have $\mathrm{S}_{\mathcal{D}}\circ \mathsf{T}_{\mathsf{M}}\simeq \mathsf{R}_{\mathsf{M}}\circ \mathrm{S}_{\mathcal{T}}$. 
\end{lem}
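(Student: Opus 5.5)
We prove the isomorphism by Yoneda: both sides are objects of $\mathcal{D}$, so it suffices to exhibit, for every compact $t\in\mathcal{T}^{\mathsf{c}}$, an isomorphism of functors $\mathsf{Hom}_{\mathcal{D}}(-,\mathrm{S}_{\mathcal{D}}\mathsf{T}_{\mathsf{M}}(t))\cong\mathsf{Hom}_{\mathcal{D}}(-,\mathsf{R}_{\mathsf{M}}\mathrm{S}_{\mathcal{T}}(t))$ on $\mathcal{D}$, natural in $t$. Note first that $\mathsf{T}_{\mathsf{M}}(t)$ is compact in $\mathcal{D}$ by Theorem \ref{balmer2}, so $\mathrm{S}_{\mathcal{D}}\mathsf{T}_{\mathsf{M}}(t)$ is defined. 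The right adjoint $\mathsf{R}_{\mathsf{M}}$ exists by Remark \ref{comonad}.

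For $d\in\mathcal{D}$ we compute, using the defining property of the Brown--Comenetz dual in $\mathcal{D}$ and the adjunction $(\mathsf{T}_{\mathsf{M}},\mathsf{U}_{\mathsf{M}})$:
\[
\mathsf{Hom}_{\mathcal{D}}(d,(\mathsf{T}_{\mathsf{M}}t)^*)\cong \mathsf{Hom}_{\mathbb{Z}}(\mathsf{Hom}_{\mathcal{D}}(\mathsf{T}_{\mathsf{M}}t,d),\mathbb{Q}/\mathbb{Z})\cong \mathsf{Hom}_{\mathbb{Z}}(\mathsf{Hom}_{\mathcal{T}}(t,\mathsf{U}_{\mathsf{M}}d),\mathbb{Q}/\mathbb{Z}).
\]
Then, using the dual in $\mathcal{T}$ and the adjunction $(\mathsf{U}_{\mathsf{M}},\mathsf{R}_{\mathsf{M}})$:
\[
\mathsf{Hom}_{\mathbb{Z}}(\mathsf{Hom}_{\mathcal{T}}(t,\mathsf{U}_{\mathsf{M}}d),\mathbb{Q}/\mathbb{Z})\cong \mathsf{Hom}_{\mathcal{T}}(\mathsf{U}_{\mathsf{M}}d,t^*)\cong \mathsf{Hom}_{\mathcal{D}}(d,\mathsf{R}_{\mathsf{M}}(t^*)).
\]
Each isomorphism is natural in $d$, so Yoneda gives $\mathrm{S}_{\mathcal{D}}\mathsf{T}_{\mathsf{M}}(t)\cong \mathsf{R}_{\mathsf{M}}\mathrm{S}_{\mathcal{T}}(t)$.

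The only point requiring care is naturality in $t$, so that the objectwise isomorphisms assemble into an isomorphism of functors $\mathcal{T}^{\mathsf{c}}\to\mathcal{D}$. For a morphism $f\colon t\to t'$ in $\mathcal{T}^{\mathsf{c}}$, the maps $f^*$ and $(\mathsf{T}_{\mathsf{M}}f)^*$ are by construction the morphisms representing precomposition with $f$, respectively $\mathsf{T}_{\mathsf{M}}f$, dualized. The adjunction isomorphism $\mathsf{Hom}_{\mathcal{D}}(\mathsf{T}_{\mathsf{M}}t,d)\cong\mathsf{Hom}_{\mathcal{T}}(t,\mathsf{U}_{\mathsf{M}}d)$ is natural in $t$, and so are the other steps. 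Hence the whole chain is natural in both variables, and Yoneda in two variables yields the natural isomorphism.

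Exactness compatibility, that is, matching the exact structures of the two composite functors, is not part of the claim as stated. If desired, it follows because both composites commute with suspension in the same way as the adjunctions, which are compatible with $[1]$.
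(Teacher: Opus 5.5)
Your proof is correct and follows the paper's own argument: the same chain (Brown--Comenetz duality in $\mathcal{D}$, the adjunction $(\mathsf{T}_{\mathsf{M}},\mathsf{U}_{\mathsf{M}})$, duality in $\mathcal{T}$, the adjunction $(\mathsf{U}_{\mathsf{M}},\mathsf{R}_{\mathsf{M}})$), concluded by Yoneda. Your check of naturality in $t$ makes explicit a point the paper leaves implicit.
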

\begin{proof}
For every compact object $t$ of $\mathcal{T}$, we have the following isomorphisms
\begin{align*}
    \mathsf{Hom}_{\mathcal{D}}(-,\mathrm{S}_{\mathcal{D}}\mathsf{T}_{\mathsf{M}}(t))&\cong \mathsf{Hom}_{\mathbb{Z}}(\mathsf{Hom}_{\mathcal{D}}(\mathsf{T}_{\mathsf{M}}(t),-),\mathbb{Q}/\mathbb{Z}) \\ 
    & \cong \mathsf{Hom}_{\mathbb{Z}}(\mathsf{Hom}_{\mathcal{T}}(t,\mathsf{U}_{\mathsf{M}}(-)),\mathbb{Q}/\mathbb{Z})\\
    & \cong \mathsf{Hom}_{\mathcal{T}}(\mathsf{U}_{\mathsf{M}}(-),\mathrm{S}_{\mathcal{T}}(t)) \\ 
    & \cong \mathsf{Hom}_{\mathcal{T}}(-,\mathsf{R}_{\mathsf{M}}\mathrm{S}_{\mathcal{T}}(t)).
\end{align*}
The claim follows by Yoneda lemma. 
\end{proof}

Following \cite{duality and recollements}, for a compactly generated triangulated category $\mathcal{T}$, we write $\mathcal{E}_{\mathcal{T}}$ for $\mathsf{thick}(\mathrm{S}_{\mathcal{T}}(\mathcal{T}^{\mathsf{c}}))\subseteq \mathcal{T}$. 

\begin{cor}
    Under the above setup, the equality $\mathcal{E}_{\mathcal{D}}=\mathsf{thick}(\mathsf{R}_{\mathsf{M}}(\mathcal{E}_{\mathcal{T}}))$ of subcategories of $\mathcal{D}$ holds.
\end{cor}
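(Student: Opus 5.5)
We will prove the two inclusions separately, using Lemma \ref{appendix lemma} together with Theorem \ref{balmer2}, which gives $\mathcal{D}^{\mathsf{c}}=\mathsf{thick}(\mathsf{T}_{\mathsf{M}}(\mathcal{T}^{\mathsf{c}}))$.

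For the inclusion $\mathcal{E}_{\mathcal{D}}\subseteq\mathsf{thick}(\mathsf{R}_{\mathsf{M}}(\mathcal{E}_{\mathcal{T}}))$, we first note that $\mathrm{S}_{\mathcal{D}}$ is exact. Hence
\[
\mathrm{S}_{\mathcal{D}}(\mathcal{D}^{\mathsf{c}})=\mathrm{S}_{\mathcal{D}}(\mathsf{thick}(\mathsf{T}_{\mathsf{M}}(\mathcal{T}^{\mathsf{c}})))\subseteq \mathsf{thick}(\mathrm{S}_{\mathcal{D}}\mathsf{T}_{\mathsf{M}}(\mathcal{T}^{\mathsf{c}})),
\]
using the general fact recorded in the proof of Lemma \ref{paraskhnio}: an exact functor $\mathsf{F}$ satisfies $\mathsf{F}(\mathsf{thick}(\mathcal{X}))\subseteq\mathsf{thick}(\mathsf{F}(\mathcal{X}))$. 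By Lemma \ref{appendix lemma}, $\mathrm{S}_{\mathcal{D}}\mathsf{T}_{\mathsf{M}}(t)\cong\mathsf{R}_{\mathsf{M}}\mathrm{S}_{\mathcal{T}}(t)$ for every $t\in\mathcal{T}^{\mathsf{c}}$, and this object lies in $\mathsf{R}_{\mathsf{M}}(\mathcal{E}_{\mathcal{T}})$. Therefore $\mathrm{S}_{\mathcal{D}}(\mathcal{D}^{\mathsf{c}})\subseteq\mathsf{thick}(\mathsf{R}_{\mathsf{M}}(\mathcal{E}_{\mathcal{T}}))$, and taking thick closures gives $\mathcal{E}_{\mathcal{D}}\subseteq \mathsf{thick}(\mathsf{R}_{\mathsf{M}}(\mathcal{E}_{\mathcal{T}}))$.

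For the reverse inclusion, the functor $\mathsf{R}_{\mathsf{M}}$ is exact, being a right adjoint of an exact functor between triangulated categories. Thus
\[
\mathsf{R}_{\mathsf{M}}(\mathcal{E}_{\mathcal{T}})=\mathsf{R}_{\mathsf{M}}(\mathsf{thick}(\mathrm{S}_{\mathcal{T}}(\mathcal{T}^{\mathsf{c}})))\subseteq\mathsf{thick}(\mathsf{R}_{\mathsf{M}}\mathrm{S}_{\mathcal{T}}(\mathcal{T}^{\mathsf{c}}))=\mathsf{thick}(\mathrm{S}_{\mathcal{D}}\mathsf{T}_{\mathsf{M}}(\mathcal{T}^{\mathsf{c}})),
\]
where the last equality is again Lemma \ref{appendix lemma}. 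Since $\mathsf{T}_{\mathsf{M}}(\mathcal{T}^{\mathsf{c}})\subseteq\mathcal{D}^{\mathsf{c}}$, the right-hand side is contained in $\mathsf{thick}(\mathrm{S}_{\mathcal{D}}(\mathcal{D}^{\mathsf{c}}))=\mathcal{E}_{\mathcal{D}}$. As $\mathcal{E}_{\mathcal{D}}$ is thick, it follows that $\mathsf{thick}(\mathsf{R}_{\mathsf{M}}(\mathcal{E}_{\mathcal{T}}))\subseteq\mathcal{E}_{\mathcal{D}}$.

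The only points needing care are the exactness of $\mathrm{S}_{\mathcal{D}}$ and of $\mathsf{R}_{\mathsf{M}}$. The first is the partial Serre functor fact cited from \cite{OPS2}. The second is standard: by \cite{neeman}, the right adjoint of an exact functor is exact. The isomorphism in Lemma \ref{appendix lemma} need only hold objectwise, since we only compare thick closures of classes of objects. No step appears to be a genuine obstacle; the argument is a direct combination of Theorem \ref{balmer2} and Lemma \ref{appendix lemma}.
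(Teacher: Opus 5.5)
Your proof is correct and follows the paper's argument. The paper strings the same ingredients into a single chain of equalities: Theorem~\ref{balmer2}, Lemma~\ref{appendix lemma}, and the identity $\mathsf{thick}(\mathsf{F}(\mathsf{thick}(\mathcal{X})))=\mathsf{thick}(\mathsf{F}(\mathcal{X}))$ for exact $\mathsf{F}$, applied to $\mathrm{S}_{\mathcal{D}}$ and $\mathsf{R}_{\mathsf{M}}$; you have unpacked that chain into two inclusions, and you make explicit the exactness of these two functors, which the paper leaves implicit.
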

\begin{proof}
    We observe that for any exact functor $\mathsf{F}\colon\mathcal{C}\rightarrow \mathcal{C}'$ of any triangulated categories, and any subcategory $\mathcal{X}$ of $\mathcal{C}$ we have $\mathsf{thick}(\mathsf{F}(\mathsf{thick}(\mathcal{X})))=\mathsf{thick}(\mathsf{F}(\mathcal{X}))$, which is easily obtained from the inclusion $\mathsf{F}(\mathsf{thick}(\mathcal{X}))\subseteq\mathsf{thick}(\mathsf{F}(\mathcal{X}))$.  Consequently, we have the following equalities
    \begin{align*}
        \mathcal{E}_{\mathcal{D}} & = \mathsf{thick}(\mathrm{S}_{\mathcal{D}}(\mathcal{D}^{\mathsf{c}})) \\ 
        & = \mathsf{thick}(\mathrm{S}_{\mathcal{D}}(\mathsf{thick}(\mathsf{T}_{\mathsf{M}}(\mathcal{T}^{\mathsf{c}})))) & \text{Theorem \ref{balmer2}} \\ 
        & = \mathsf{thick}(\mathrm{S}_{\mathcal{D}}\mathsf{T}_{\mathsf{M}}(\mathcal{T}^{\mathsf{c}})) \\ 
        & = \mathsf{thick}(\mathsf{R}_{\mathsf{M}}\mathrm{S}_{\mathcal{T}}(\mathcal{T}^{\mathsf{c}})) & \text{Lemma \ref{appendix lemma}} \\ 
        & = \mathsf{thick}(\mathsf{R}_{\mathsf{M}}(\mathsf{thick}(\mathrm{S}_{\mathcal{T}}(\mathcal{T}^{\mathsf{c}})))) \\ 
        & = \mathsf{thick}(\mathsf{R}_{\mathsf{M}}(\mathcal{E}_{\mathcal{T}}))
     \end{align*}
     of subcategories of $\mathcal{D}$. 
\end{proof}


\begin{thebibliography}{99}

\bibitem{amiot} 
    \textsc{C.~Amiot}, \emph{Cluster categories for algebras of global dimension 2 and quivers with potential}, Ann.\ Inst.\ Fourier (Grenoble) 59 (2009), no.\ 6, 2525–2590.


\bibitem{balmer} 
    \textsc{P.~Balmer}, \emph{Separability and triangulated categories}, Adv.\ Math.\ 226 (2011), no.\ 5, 4352–4372.

\bibitem{balmer2}
    \textsc{P.~Balmer}, \emph{The derived category of an étale extension and the separable Neeman-Thomason theorem}, J.\ Inst.\ Math.\ Jussieu 15 (2016), no.\ 3, 613–623.

\bibitem{balmer_schlichting} 
    \textsc{P.~Balmer, M.~Schlichting}, \emph{Idempotent completion of triangulated categories}, J.\ Algebra 236 (2001), no.\ 2, 819–834.

\bibitem{bhatt}  
    \textsc{B.~Bhatt}, \emph{The \'etale topology}, available at \url{https://www.math.ias.edu/~bhatt/math/etalestcksproj.pdf.}

\bibitem{bokstedt_neeman}
    \textsc{M.~Bökstedt, A.~Neeman}, \emph{Homotopy limits in triangulated categories}, Compositio Math.\ 86 (1993), no.\ 2, 209–234.
    

\bibitem{bondal_van-den-bergh} 
    \textsc{A.~Bondal, M.~van den Bergh}, \emph{Generators and representability of functors in commutative and noncommutative geometry}, Mosc. Math. J. 3 (2003), no. 1, 1–36, 258.

    \bibitem{chen2}
\textsc{X.W.~Chen},
\textit{The singularity category of an algebra with radical square zero}, Doc.\ Math.\ 16 (2011), 921--936.

\bibitem{xwchen}
\textsc{X.W.~Chen}, {\em A note on separable functors and monads with an application to equivariant derived categories}, Abh.\ Math.\ Semin.\ Univ.\ Hambg.\ 85 (2015), no.\ 1, 43--52.  


\bibitem{duality and recollements} 
    \textsc{X.~Chen, Y.~Sun, Y.~Zhang}, \emph{Triangulated categories with a compact silting object, Brown-Comenetz duality and Brown representability theorems}, arXiv:2602.14383. 

\bibitem{MerlinChrist} 
     \textsc{M.~Christ}, \emph{$\infty$-categorical group quotients via skew group algebras}, arXiv:2501.13666. 


\bibitem{eilenberg_moore} 
    \textsc{S.~Eilenberg, J.C.~Moore}, \emph{Adjoint functors and triples}, Illinois J. Math. 9 (1965), 381–398.
   
\bibitem{elagin}
    \textsc{A.~Elagin}, \emph{On equivariant triangulated categories}, arXiv:1403.7027. 


\bibitem{EGA} 
    \textsc{A.~Grothendieck}, \emph{Éléments de géométrie algébrique. IV. Étude locale des schémas et des morphismes de schémas. I.}, Inst. Hautes Études Sci. Publ. Math. No. 20 (1964), 259 pp.

    \bibitem{SGA1}
\textsc{A.~Grothendieck, M.~Raynaud}, {\em Rev\^etements \'Etales et Groupe Fondamental (SGA 1)}, S\'eminaire de G\'eom\'etrie Alg\'ebrique, vol. 1960/61, Institut des Hautes \'Etudes Scientifiques, Paris, 1963.


\bibitem{guo} 
    \textsc{L.~Guo}, \emph{Cluster tilting objects in generalized higher cluster categories}, J. Pure Appl. Algebra 215 (2011), no. 9, 2055–2071.

\bibitem{heisenberg} 
    \textsc{A.~Gyenge, C.~Koppensteiner, T.~Logvinenko}, The Heisenberg category of a category, arXiv:2105.13334. 


\bibitem{huybrechts}
    \textsc{D. Huybrechts}, {\em Fourier-Mukai Transforms in Algebraic Geometry}, Oxford University Press, USA, 2006.

\bibitem{jin} 
    \textsc{H.~Jin}, \emph{Cohen–Macaulay differential graded modules and negative Calabi–Yau configurations}, Adv.\ Math.\ 374 (2020), 107338, 59 pp.


\bibitem{AGK} 
    \textsc{H.~Jin, D.~Yang, G.~Zhou}, \emph{A localisation theorem for singularity categories of proper dg algebras}, arXiv:2302.05054.


\bibitem{keller0} 
     \textsc{B.~Keller}, \emph{Deriving DG categories}, Ann.~Sci.~Éc.~Norm.~Supér.~27, 63--102 (1994).

\bibitem{kostas} 
     \textsc{P.~Kostas}, \emph{Global dimension of dg algebras via compact silting objects}, arXiv:2604.13698. 

\bibitem{regular} 
     \textsc{P.~Kostas, C.~Psaroudakis, J.~Vit\'oria}, \emph{Intrinsic homological algebra for triangulated categories}, arXiv:2512.18417. 


\bibitem{krause localization} 
      \textsc{H.~Krause}, \emph{Localization theory for triangulated categories}, Triangulated categories, 161–235. London Math. Soc. Lecture Note Ser., 375 Cambridge University Press, Cambridge, 2010. 

\bibitem{li} 
    \textsc{L.~Li}, \emph{Homological dimensions of crossed products}, Glasg. Math. J. 59 (2017), no. 2, 401–420.

\bibitem{maclane categories} 
   \textsc{S.~MacLane}, \emph{Categories for the working mathematician}, Grad. Texts in Math., Vol. 5 Springer-Verlag, New York-Berlin, 1971. ix+262 pp.


\bibitem{milne} 
   \textsc{J.-S.~Milne}, \emph{Étale cohomology}, Princeton Math. Ser., No. 33 Princeton University Press, Princeton, NJ, 1980. xiii+323 pp.


\bibitem{minamoto global dimension} 
    \textsc{H.~Minamoto}, \emph{Resolutions and homological dimensions of DG-modules}, Israel J. Math. 245 (2021), no. 1, 409–454.

\bibitem{Mumford}
\textsc{D.~Mumford}, {\em Abelian Varieties}, With appendices by C.\ P.\ Ramanujam and Yuri Manin. Corrected reprint of the second (1974) edition. Tata Institute of Fundamental Research Studies in Mathematics, 5.\ Published for the Tata Institute of Fundamental Research, Bombay; by Hindustan Book Agency, New Delhi, 2008.\ xii+263 pp.


\bibitem{neeman} 
    \textsc{A.~Neeman}, \emph{The Grothendieck duality theorem via Bousfield's techniques and Brown representability}, J. Amer. Math. Soc. 9 (1996), no. 1, 205–236.


\bibitem{neeman7} 
   \textsc{A.~Neeman}, \emph{Triangulated categories with a single compact generator, and two Brown representability theorems}, Invent. Math. (2026), https://doi.org/10.1007/s00222-025-01401-5.


\bibitem{OPS2} 
\textsc{S.~Oppermann, C.~Psaroudakis, T.~Stai}, \emph{Partial Serre duality and cocompact objects}, Selecta Math. (N.S.) 29 (2023), no. 4, Paper No. 52, 59 pp.

\bibitem{reiten_riedtmann}
      \textsc{I.~Reiten, C.~Riedtmann}, \emph{Skew group algebras in the representation theory of Artin algebras}, J.\ Algebra 92 (1985), no.\ 1, 224–282.
 

\bibitem{sun}
    \textsc{C.~Sun}, \emph{A note on equivariantization of additive categories and triangulated categories}, J.\
Algebra 534 (2019), 483–530.

\bibitem{StacksProject} 
\textsc{The Stacks Project Authors}, {\em The Stacks Project}, 
2025, \url{https://stacks.math.columbia.edu}

\bibitem{tomonaga} 
     \textsc{R.~Tomonaga}, \emph{On silting mutations preserving global dimension}, arXiv:2510.26206. 

\end{thebibliography}
\end{document}